\documentclass[final,leqno,onetabnum]{siamltex1213}
\usepackage{amsfonts,amssymb,amsmath}
\usepackage[firstinits=true,backend=bibtex,doi=false,isbn=false,url=false,eprint=false]{biblatex} 
\renewbibmacro{in:}{\ifentrytype{article}{}{\printtext{\bibstring{in}\intitlepunct}}} 
\usepackage{verbatim}
\usepackage{subcaption}
\usepackage{graphicx, color}
\usepackage[colorlinks]{hyperref}
\graphicspath{{./figures/}}  
\newcommand{\R}{\mathbb{R}}

\newcommand{\N}{\mathbb{N}}
\newcommand{\K}{\mathcal{K}}
\newcommand{\dd}{d}
\newcommand{\eqdef}{\overset{\text{def}}{=}} 
\newcommand{\COV}[1]{\mathbf{Cov} \left[ #1 \right]}
\providecommand{\Null}[1]{\mathbf{Null}\left( #1\right)}
\providecommand{\myRange}[1]{\mathbf{Range}\left( #1\right)}
\providecommand{\Rank}[1]{\mathbf{Rank}\left( #1\right)}
\providecommand{\Tr}[1]{\mathbf{Tr}\left( #1\right)}
\providecommand{\E}[1]{\mathbf{E}\left[ #1\right]}

\providecommand{\rob}[1]{#1}
\providecommand{\robdelete}[1]{}

\providecommand{\dotprod}[1]{\left< #1\right>} 

\providecommand{\norm}[1]{\| #1 \|}  
\newtheorem{assumption}{Assumption}[section]
\newcommand{\Ref}[1]{../../ref/#1}
\bibliography{\Ref{Krylov},\Ref{quasi-Newton_Conjugate_Gradients},\Ref{Stochastic_methods_models},\Ref{Image_and_signal_recovery},\Ref{Inverting_matrices},\Ref{Statistics_prob},\Ref{Optimization_methods},\Ref{Peter}}
\usepackage{tikz}
\usetikzlibrary{decorations.pathreplacing,calc,positioning,arrows,shapes}

\title{Randomized Iterative Methods for Linear Systems}
\author{Robert M.~Gower \footnotemark[2]  \qquad \qquad Peter Richt\'{a}rik \footnotemark[3] }
\begin{document}

\maketitle
\renewcommand{\thefootnote}{\fnsymbol{footnote}}

\footnotetext[2]{School of Mathematics, The Maxwell Institute
for Mathematical Sciences, University of Edinburgh,(e-mail: gowerrobert@gmail.com)}
\footnotetext[3]{School of Mathematics, The Maxwell Institute
for Mathematical Sciences, University of Edinburgh, United Kingdom (e-mail: peter.richtarik@ed.ac.uk)}
\renewcommand{\thefootnote}{\arabic{footnote}}

\begin{abstract}
We develop a novel, fundamental and surprisingly simple {\em randomized iterative method} for solving consistent linear systems.  Our method has six different but equivalent interpretations: sketch-and-project, constrain-and-approximate, random intersect, random linear solve, random update and random fixed point.
By varying its two parameters---a positive definite matrix (defining geometry), and a random matrix (sampled in an i.i.d.\ fashion in each iteration)---we recover a comprehensive array of  well known algorithms as special cases, including the randomized Kaczmarz method, randomized Newton method, randomized coordinate descent method and  random Gaussian pursuit. We naturally also obtain variants of all these methods using blocks and importance sampling. However, our method allows for a much wider selection of these two parameters, which leads to a number of new specific methods. We prove exponential convergence of the {\em expected norm of the error} in a single theorem, from which existing complexity results for known variants can be obtained.  However,  we also give an exact formula for the evolution   of the expected iterates, which allows us to give {\em lower bounds} on the convergence rate. 
\end{abstract}

\begin{keywords}
linear systems, stochastic methods, iterative methods, randomized Kaczmarz, randomized Newton, randomized coordinate descent, random pursuit, randomized fixed point.
\end{keywords}

\begin{AMS}  15A06, 15B52, 65F10,  68W20,  65N75, 65Y20, 68Q25, 68W40, 90C20 \end{AMS} 

\pagestyle{myheadings}
\thispagestyle{plain}

\section{Introduction}


The need to solve linear systems of equations is ubiquitous in essentially all quantitative areas of human endeavour, including industry and science. Linear systems are a central problem in numerical linear algebra, and play an important role in computer science, mathematical computing, optimization, signal processing, engineering, numerical analysis, computer vision, machine learning,  and many other fields. 

For instance, in the field of large scale optimization, there is a growing interest in inexact and approximate Newton-type methods for ~\cite{Dembo1982,Eisenstat1994b,Bellavia1998,Zhao2010a,Wang2013,Gondzio2013}, which can benefit from fast subroutines for calculating approximate solutions of linear systems.  In machine learning, applications arise for the problem of finding optimal configurations in Gaussian Markov Random Fields \cite{GMRFbook}, in graph-based semi-supervised learning and other graph-Laplacian problems \cite{Bengio+al-ssl-2006}, least-squares SVMs, Gaussian processes and more. 

In a large scale setting, direct methods are generally not competitive when compared to iterative approaches. While classical iterative methods are deterministic, 
recent breakthroughs suggest that randomization can play a powerful role in the design and analysis of efficient algorithms~\cite{Strohmer2009,Leventhal2010,Needell2010,Drineas2011,Zouzias2013,Lee2013,Ma2015, Richtarik2015} which are in many situations competitive  or better than existing deterministic methods.

\subsection{Contributions}

Given a real matrix $A \in \R^{m \times n}$  and a real vector $b \in \R^m$, in this paper we consider the linear system
\begin{equation}\label{eq:Axb}Ax =b.\end{equation}
We shall assume throughout that the system is consistent:  there exists $x^*$ for which $Ax^*=b$.   

We now comment on the main contribution of this work:

\textbf{1. New method.} We develop a novel, fundamental, and surprisingly simple  {\em randomized iterative method} for solving \eqref{eq:Axb}.

\textbf{2. Six equivalent formulations.} Our method allows for several seemingly different but nevertheless equivalent formulations. First, it can be seen as a {\em sketch-and-project} method, in which the system \eqref{eq:Axb} is replaced by its {\em random sketch}, and then the current iterate is projected onto the solution space of the sketched system. We can also view  it as a {\em constrain-and-approximate} method, where we constrain the next iterate to live in a particular random affine space passing through the current iterate, and then pick the point from this subspace which best approximates the optimal solution. Third,  the method can be seen as an   iterative solution of a sequence of random (and simpler) linear equations. The method also allows for a simple geometrical interpretation: the new iterate is defined as the unique intersection of two random affine spaces which are orthogonal complements. \rob{The fifth viewpoint gives a closed form formula for the  {\em random update} which needs to be applied to the current iterate in order to arrive at the new one. Finally, the method can be seen as a {\em random fixed point iteration.}}

\textbf{3. Special cases.} These multiple viewpoints enrich our interpretation of the method, and enable us to draw previously unknown links between several existing algorithms. Our algorithm has two parameters, an $n\times n$ positive definite matrix $B$ defining geometry of the space, and a random matrix $S$.  Through combinations of these two parameters, in special cases our method recovers several well known algorithms.  For instance, we recover the randomized Kaczmarz method of Strohmer and Vershyinin~\cite{Strohmer2009}, randomized coordinate descent method  of Leventhal and Lewis~\cite{Leventhal2010}, random pursuit~\cite{Nesterov2011a,Stich2014a,Stich2014,S.U.StichC.L.Muller2014} (with exact line search), and the stochastic Newton method recently proposed by Qu et al \cite{Richtarik2015}. However, our method is more general, and leads to i) various generalizations and improvements of the aforementioned methods (e.g., block setup, importance sampling), and ii) completely new methods. Randomness enters our framework in a very general form, which allows us to obtain a {\em Gaussian Kaczmarz method}, {\em Gaussian descent},  and more. 

 \textbf{4. Complexity: general results.} When $A$ has full column rank, our framework allows us to determine the complexity of these methods using a single analysis. Our main results are summarized in Table~\ref{tab:complexity},
where $\{x^k\}$ are the iterates of our method,  $Z$ is a random matrix dependent on the data matrix $A$, parameter matrix $B$ and random parameter matrix $S$, defined as \begin{equation}\label{eq:Z-first}
Z \eqdef A^TS(S^TAB^{-1}A^TS)^{\dagger}S^TA,
\end{equation}
\rob{where $\dagger$ denotes the (Moore-Penrose) pseudoinverse\footnote{\rob{Every (not necessarily square) real matrix $M$ has a real pseudoinverse. In particular, in this paper we will use the following properties of the pseudoinverse: $ M M^\dagger M = M$, $M^\dagger M M^\dagger = M$,  $(M^T M)^\dagger M^T  = M^\dagger$, $(M^T)^\dagger = (M^\dagger)^T$ and $(M M^T)^\dagger = (M^\dagger)^T M^\dagger$.}}. Moreover,} $\norm{x}_B \eqdef  \sqrt{\dotprod{x,x}_B}$, where  $\dotprod{x,y}_B \eqdef x^TBy$, for all $x,y \in \R^{n}$. \rob{It can be deduced from the properties of the pseudoinverse that $Z$ is necessarily symmetric and positive semidefinite\footnote{ Indeed, it suffices to use the identity $(M M^T)^\dagger = (M^\dagger)^T M^\dagger$ with $M=S^T AB^{-1/2}$.}.} 

\rob{As we shall see later, we will often consider setting $B=I$, $B=A$ (if $A$ is positive definite) or $B=A^TA$ (if $A$ is of full column rank). In particular, we first} 
show that the convergence rate  $\rho$ is always bounded between zero and one. We also show that as soon as $\E{Z}$ is invertible (which can only happen if $A$ has full column rank, which then implies that $x^*$ is unique), we have $\rho<1$, and the method converges.
Besides establishing a bound involving the {\em expected norm of the error} (see \rob{the} last line of Table~\ref{tab:complexity}), we also obtain bounds involving the {\em norm of the expected error} (second line of Table~\ref{tab:complexity}). Studying the expected sequence of iterates directly is very fruitful, as it allows us to establish an {\em exact characterization} of the evolution of the expected iterates (see \rob{the} first line of Table~\ref{tab:complexity}) through a {\em linear fixed point iteration}. 


Both of these theorems on the convergence of the error can be recast as iteration complexity bounds.
For instance, using standard arguments, from Theorem~\ref{theo:normEconv} in Table~\ref{tab:complexity} we observe that for a given $\epsilon >0$ we have that
\begin{equation} \label{eq:itercomplex}k \geq \frac{1}{1-\rho} \log\left(\frac{1}{\epsilon}\right) \quad \Rightarrow \quad \norm{\E{x^k-x^*}}_B \leq \epsilon \norm{x^0-x^*}_B.
\end{equation}


\textbf{5. Complexity: special cases.} Besides these generic results, which hold without any major restriction on the sampling matrix $S$ (in particular, it can be either discrete or continuous), we give a specialized result applicable to discrete sampling matrices $S$ (see Theorem~\ref{theo:convsingleS}). In the special cases for which rates are known, our analysis  recovers the existing rates. 


\begin{table}
\centering
\begin{tabular}{|c|c|}
\hline
& \\
 $\E {x^{k+1} -x^{*}} = \left(I  - B^{-1}\E{Z}\right) \E{x^{k} - x^{*}}  $ & Theorem 4.1\\
 & \\
$\norm{\E {x^{k+1} -x^{*}}}_B^2 \leq \rho^2 \; \cdot \; \norm{\E{x^{k} - x^{*}}}_B^2
 $ & Theorem~\ref{theo:normEconv}\\
 & \\
$ \E {\norm{x^{k+1} -x^{*} }_B^2 } \leq \rho \;\cdot\; \E{ \norm{x^{k} - x^{*}}_B^2}$  & Theorem~\ref{theo:Enormconv}\\
& \\
 \hline
 \end{tabular}
 \caption{Our main complexity results. The convergence rate is: $\rho = 1- \lambda_{\min}(B^{-1/2}\E{Z}B^{-1/2}).$}
 \label{tab:complexity}
 \end{table}

\textbf{6. Extensions.} Our approach opens up many avenues for further development and research. For instance, it is possible to extend the results to the case when $A$ is not necessarily of full column rank.  Furthermore, as our results hold for a wide range of distributions, new and efficient variants of the general method can be designed for problems of specific structure by fine-tuning the stochasticity to the structure. Similar ideas can be applied to design randomized iterative algorithms for finding the inverse of a very large matrix.

\subsection{Background and Related Work}


\rob{The literature on solving linear systems via iterative methods is vast and has long history \cite{Kelley95,Saad2003}.  For instance, the Kaczmarz method, in which one cycles through the rows of the system and each iteration is formed by projecting the current point to the hyperplane formed by the active row, dates back to the 30's~\cite{Kaczmarz1937}. The Kaczmarz method is just one example of an array of row-action methods for linear systems (and also, more generally, feasibility and optimization problems)  which were studied in the second half of the 20th century~\cite{rowaction1981}. }

Research into the Kaczmarz method was in 2009 reignited by Strohmer and Vershynin~\cite{Strohmer2009}, who gave a brief and elegant proof that a randomized thereof enjoys an exponential error decay (also know as ``linear convergence''). This has triggered much research into developing and analyzing randomized linear solvers. 

\rob{It should be mentioned at this point that the randomized Kaczmarz (RK) method  arises as a special case (when one considers quadratic objective functions) of the stochastic gradient descent (SGD) method for {\em convex optimization} which can be traced back to the seminal work of  Robbins and Monro's  on stochastic approximation~\cite{RobbinsMonro:1951}. Subsequently, intensive research went into studying various extensions of the SGD method. However, to the best of our knowledge, no complexity results with exponential error decay were established prior to the aforementioned work of  Strohmer and Vershynin~\cite{Strohmer2009}. This is the reason behind our choice of  \cite{Strohmer2009} as the starting point of our discussion.

Motivated by the results of Strohmer and Vershynin~\cite{Strohmer2009}, Leventhal and Lewis~\cite{Leventhal2010} utilize similar techniques to establish the first bounds for {\em randomized coordinate descent} methods for solving systems with positive definite matrices, and systems arising from least squares problems~\cite{Leventhal2010}.  These bounds are similar to those for the RK method. This development was later picked up by the optimization and machine learning communities, and much progress has been made in generalizing these early results in countless ways to various structured  convex optimization problems. For a brief up to date account of the  development in this area, we refer the reader to \cite{APPROX, ALPHA} and the references therein. 
}

The RK method and its analysis have been further extended to  the least-squares problem~\cite{Needell2010,Zouzias2013} and the  block setting~\cite{Needell2012a,Needell2014}. In~\cite{Ma2015} the authors extend the randomized coordinate descent and the RK methods to the problem of solving underdetermined systems.  The authors of~\cite{Ma2015,Ramdas2014} analyze side-by-side the  randomized coordinate descent  and RK method,  for least-squares, using a convenient notation in order to point out their similarities. Our work takes the next step, by analyzing these, and many other methods, through a genuinely single analysis. Also in the spirit of unifying the analysis of different methods, in~\cite{Oswald2015} the authors provide a unified analysis of iterative Schwarz methods and Kaczmarz methods. 

The use of random Gaussian directions as search directions  in zero-order (derivative-free) minimization algorithm was recently suggested~\cite{Nesterov2011a}. More recently, Gaussian directions have been combined with exact and inexact line-search into a single {\em random pursuit} framework~\cite{S.U.StichC.L.Muller2014}, and further utilized within a randomized variable metric method~\cite{Stich2014,Stich2014a}.
 

\section{One Algorithm in Six Disguises} Our method has {\em two parameters}: i) an $n\times n$ positive definite matrix $B$ which is used to define the $B$-inner product and the induced $B$-norm by \begin{equation} \label{eq:B-innerprod}\langle x, y\rangle_{B}\eqdef \langle Bx,y\rangle, \qquad \|x\|_B \eqdef \sqrt{ \langle x, x \rangle_B},\end{equation}
where $\langle \cdot,\cdot \rangle$ is the standard Euclidean inner product,  and ii) a random matrix $S\in \R^{m\times q}$, to be drawn in an i.i.d.\ fashion at each iteration. We stress that we do not restrict the number of columns of $S$; indeed, we even allow $q$ to vary (and hence,  $q$ is a random variable).

\subsection{Six Viewpoints}

Starting from $x^k\in\R^n$, our method draws a random matrix $S$ and uses it to generate a new point $x^{k+1}\in\R^n$. This iteration can be formulated in {\em six seemingly different but equivalent ways:}

\paragraph{1. Sketching Viewpoint: Sketch-and-Project} $x^{k+1}$ is the nearest point to $x^k$ which solves a {\em sketched} version of the original linear system:
\begin{equation}
\boxed{\quad x^{k+1} \quad = \quad \arg\min_{x\in \R^n} \norm{x- x^{k}}_B^2 \quad \mbox{subject to} \quad  S^T Ax = S^T b \quad} \label{eq:NF} 
\end{equation}
This viewpoint arises very naturally. Indeed, since the original system \eqref{eq:Axb} is assumed to be complicated, we replace it by a simpler system---a {\em random sketch} of the original system \eqref{eq:Axb}---whose solution set $\{x \;|\; S^T Ax = S^T b\}$ contains all solutions of the original system. However, this system will typically have many solutions, so in order to define a method, we need a way to select one of them. The idea is to try to preserve as much of the information learned so far as possible, as condensed in the current point $x^k$. Hence, we  pick the solution which is closest to $x^k$. 

\paragraph{2. Optimization Viewpoint: Constrain-and-Approximate} $x^{k+1}$ is the best approximation of $x^*$ in a random space passing through $x^k$:
\begin{equation}\boxed{\; x^{k+1} \; = \; \arg\min_{x\in \R^n} \norm{x\phantom{^k}- x^{*}}_B^2 \quad \mbox{subject to} \quad x = x^{k} + B^{-1}A^TS y, \quad y \;\text{is free} \;} \label{eq:RF}
\end{equation}
The above step has the following interpretation\footnote{Formulation~\eqref{eq:RF} is similar to the framework often used to describe Krylov methods~\cite[Chapter 1]{Liesen2014}, which is
\[x^{k+1} \eqdef \arg\min_{x\in \R^n} \norm{x- x^{*}}_B^2 \quad \mbox{subject to} \quad x \in x^{0} + \K_{k+1},\]
where $\K_{k+1}\subset \R^n$ is a $(k+1)$--dimensional subspace.
Note that the constraint $x \in x^{0}+\K_{k+1}$ is an affine space that contains $x^{0}$, as opposed to $x^{k}$ in our formulation~\eqref{eq:RF}.
The objective $\|x-x^{*}\|^2_B $ is a generalization of the residual, where $B=A^TA$ is used to characterize minimal residual methods~\cite{Paige1975,Saad1986} and $B=A$ is used to describe the Conjugate Gradients method~\cite{Hestenes1952}. Progress from iteration to the next is guaranteed by using expanding nested search spaces at each iteration, that is, $\K_k \subset \K_{k+1}.$ In our setting, progress is enforced by using $x^{k}$ as the displacement term instead of $x^{0}.$  This also allows for a simple recurrence for updating $x^{k}$ to arrive at $x^{k+1}$, which facilitates the analyses of the method. In the Krylov setting, to arrive at an explicit recurrence, one needs to carefully select a basis for the nested spaces that allows for short recurrence. }. We  choose a random affine space containing $x^k$, and constrain our method to choose the next iterate from this space. We then do as well as we can on this space; that is, we pick $x^{k+1}$ as the point which best approximates $x^*$. Note that $x^{k+1}$ does not depend on which solution $x^*$ is used in \eqref{eq:RF} (this can be best seen by considering the geometric viewpoint, discussed next).

 \begin{figure}[!h] \centering
\includegraphics[width =7cm]{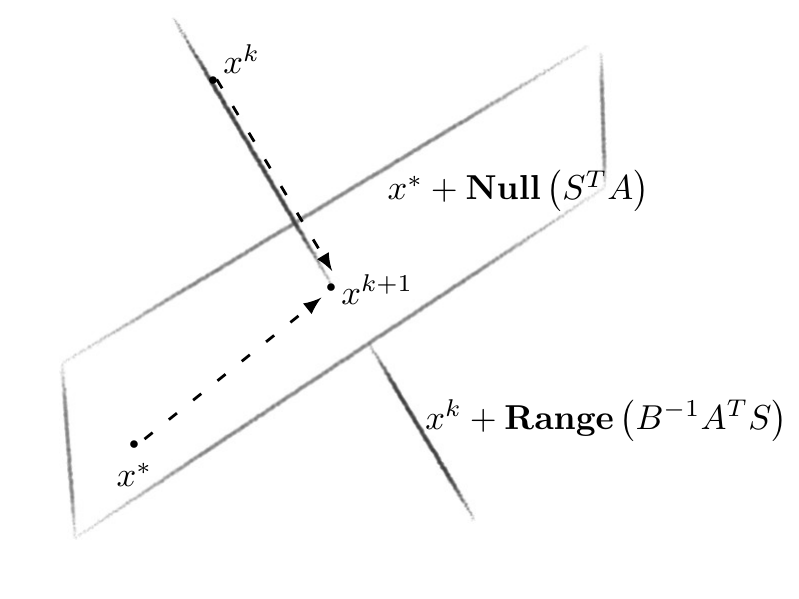}
\caption{\footnotesize The geometry of our algorithm. The next iterate, $x^{k+1}$, arises as the intersection of two random affine spaces: $x^k + \myRange{B^{-1}A^T S}$ and $x^* + \Null{S^TA}$ (see \eqref{eq:geometry}). The spaces are orthogonal complements of each other with respect to the $B$-inner product, and hence $x^{k+1}$ can equivalently be written as the projection, in the $B$-norm, of $x^k$ onto $x^* +\Null{S^T A}$ (see \eqref{eq:NF}), or the projection of $x^*$  onto $x^k +\myRange{B^{-1}A^T S}$ (see \eqref{eq:RF}). The intersection $x^{k+1}$ can also be expressed as the solution of a system of linear equations (see \eqref{eq:2systems}). Finally, the new error $x^{k+1}-x^*$ is the projection, with respect to the $B$-inner product, of the current error $x^k-x^*$ onto $\Null{S^TA}$. This gives rise to a random fixed point formulation (see \eqref{eq:xZupdate}).}
\label{fig:proj}
\end{figure}

\paragraph{3. Geometric viewpoint: Random Intersect} $x^{k+1}$ is the (unique) intersection of two affine spaces:
\begin{equation} \label{eq:geometry}
\boxed{ \quad \{x^{k+1}\} \quad =\quad  \left( x^* + \Null{S^T A}\right) \quad \bigcap \quad \left(x^k + \myRange{B^{-1}A^T S}\right) \quad}
\end{equation}
First, note that the first affine space above does not depend on the choice of $x^*$ from the set of optimal solutions of \eqref{eq:Axb}. A basic result of linear algebra says that the nullspace of an arbitrary matrix is the orthogonal complement of the range space of its transpose. Hence, whenever we have $h\in \Null{S^TA}$ and $y\in \R^q$, where $q$ is the number of rows of $S$, then $\langle h, A^T S y \rangle = 0$. It follows that the two spaces in \eqref{eq:geometry} are orthogonal complements with respect to the $B$-inner product  and as such, they intersect at a unique point (see Figure~\ref{fig:proj}).

 \paragraph{4. Algebraic viewpoint: Random Linear Solve} Note that $x^{k+1}$ is the (unique) solution (in $x$) of a linear system (with variables $x$ and $y$):
\begin{equation}\label{eq:2systems}
\boxed{\quad x^{k+1} \quad = \quad \text{solution of }\quad  S^T A x = S^T b, \quad x = x^k + B^{-1}A^T S y\quad } \end{equation}
This system is clearly equivalent to \eqref{eq:geometry}, and can alternatively be written as:
\begin{equation}\label{eq:view-algebraic}\begin{pmatrix}S^T A & 0  \\
B & -A^T S\end{pmatrix} \begin{pmatrix}x \\y\end{pmatrix} = \begin{pmatrix}S^T b \\ B x^k\end{pmatrix}.\end{equation}
Hence, our method reduces the solution of the (complicated) linear  system \eqref{eq:Axb} into a sequence of (hopefully simpler) random systems of the form \eqref{eq:view-algebraic}. 

\rob{
 \paragraph{5. Algebraic viewpoint: Random Update} 
 By plugging the second equation in \eqref{eq:2systems} into the first, we eliminate $x$ and obtain the system $(S^T A B^{-1}A^T S) y = S^T(b-Ax^k)$. Note that for all solutions $y$ of this system we must have $x^{k+1} = x^k + B^{-1}A^T S y$. In particular,  we can choose the solution $y=y^k$ of minimal Euclidean norm, which is given by $y^k = (S^T A B^{-1}A^T S)^{\dagger}S^T(b-Ax^k)$, where $^{\dagger}$ denotes the Moore-Penrose pseudoinverse. This leads to an expression for $x^{k+1}$ with an explicit form of the {\em random update} which must be applied to $x^k$ in order to obtain $x^{k+1}$:
\begin{equation}\label{eq:MP} 
\boxed{\quad x^{k+1} = x^k - B^{-1}A^T S(S^T A B^{-1}A^T S)^{\dagger}S^T(Ax^k-b) \quad }\end{equation}
In some sense, this form is the standard: it is customary for iterative techniques to be written in the form $x^{k+1}=x^k + d^k$, which is precisely what \eqref{eq:MP}  does.
}

\paragraph{6. Analytic viewpoint: Random Fixed Point} 
Note that  iteration~\eqref{eq:MP} can be written as 
\begin{equation}\label{eq:xZupdate}
\boxed{\quad x^{k+1} -  x^* \quad =\quad (I-  B^{-1}Z)(x^{k} -x^*)\quad}
\end{equation} 
where $Z$ is defined in \eqref{eq:Z-first} and where we used the fact that $Ax^{*} =b$. Matrix $Z$ plays a central role in our analysis, and can be used to construct explicit projection matrices of the two projections depicted in Figure~\ref{fig:proj}.

The equivalence between these \rob{six} viewpoints is formally captured in the next statement.

\begin{theorem}[\rob{Equivalence}]\label{thm:equivalence} The \rob{six} viewpoints are equivalent: they all produce the same (unique) point $x^{k+1}$.
\end{theorem}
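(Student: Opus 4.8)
The plan is to treat the geometric formulation \eqref{eq:geometry} as the anchor and to show that each of the other five formulations produces precisely the (unique) point it defines. The first task is therefore to verify that the right-hand side of \eqref{eq:geometry} really is a single point. For this I would check that the linear subspaces $V \eqdef \Null{S^T A}$ and $W \eqdef \myRange{B^{-1}A^T S}$ are orthogonal complements in $\R^n$ with respect to the $B$-inner product: orthogonality is the one-line computation $\langle h, B^{-1}A^T S y\rangle_B = h^T A^T S y = (S^T A h)^T y = 0$ for $h \in V$, $y \in \R^q$, already noted below \eqref{eq:geometry}; and $\dim V + \dim W = n$ follows from rank-nullity applied to $S^T A$ together with $\Rank{B^{-1}A^T S} = \Rank{A^T S} = \Rank{S^T A}$, using that $B$ is invertible. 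Since $V$ and $W$ are $B$-orthogonal complements, for any $a,b\in\R^n$ the affine spaces $a+V$ and $b+W$ meet in exactly one point (decompose $b-a$ along the direct sum $V\oplus W$); with $a=x^*$ and $b=x^k$ this gives existence and uniqueness of $x^{k+1}$ in \eqref{eq:geometry}. I would also record here that $x^*+V = \{x : S^T A x = S^T b\}$ by consistency, so this point is independent of the particular solution $x^*$.

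Next I would dispatch the two optimization formulations. Formulation \eqref{eq:NF} asks for the $B$-projection of $x^k$ onto the affine space $x^*+V$; by the standard variational characterization of projection in an inner product space, its minimizer $x^{k+1}$ is the unique point of $x^*+V$ with $x^{k+1}-x^k \in V^{\perp_B} = W$, i.e.\ the unique point lying in both $x^*+V$ and $x^k+W$---which is exactly the point of \eqref{eq:geometry}. Symmetrically, \eqref{eq:RF} asks for the $B$-projection of $x^*$ onto $x^k+W$, whose minimizer is the unique point of $x^k+W$ with $x^{k+1}-x^* \in W^{\perp_B} = V$, again the point of \eqref{eq:geometry}; this re-derivation also makes the claimed independence of \eqref{eq:RF} from the choice of $x^*$ transparent. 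Formulation \eqref{eq:2systems} is then a verbatim restatement of \eqref{eq:geometry} once $S^TAx = S^Tb$ is rewritten as $x - x^* \in V$ (using consistency) and the parametric constraint is read as $x - x^k \in W$; uniqueness of the $x$-component is what we already proved, even though the auxiliary variable $y$ need not be unique, and its matrix form \eqref{eq:view-algebraic} is just \eqref{eq:2systems} stacked.

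It then remains to connect the two algebraic closed forms. For \eqref{eq:MP} I would follow the elimination sketched in the text: substituting the parametric constraint into $S^TAx = S^Tb$ yields the reduced system $(S^TAB^{-1}A^TS)\,y = S^T(b-Ax^k)$. I would record two facts. First, this system is consistent: using the identity $(MM^T)^\dagger=(M^\dagger)^T M^\dagger$ with $M = S^TAB^{-1/2}$ gives $\myRange{S^TAB^{-1}A^TS} = \myRange{S^TAB^{-1/2}} = \myRange{S^TA}$, and $S^T(b-Ax^k) = S^TA(x^*-x^k)\in\myRange{S^TA}$ by consistency. Second, if $y_1,y_2$ both solve it then $(y_1-y_2)^TS^TAB^{-1}A^TS(y_1-y_2)=0$, and since $B^{-1}\succ 0$ this forces $A^TS(y_1-y_2)=0$, hence $B^{-1}A^TSy_1 = B^{-1}A^TSy_2$; thus $x^{k+1}=x^k+B^{-1}A^TSy$ is the same for every solution $y$, in particular for the minimum-norm solution $y^k = (S^TAB^{-1}A^TS)^\dagger S^T(b-Ax^k)$, which yields \eqref{eq:MP}. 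Because this $x^{k+1}$ satisfies $S^TAx^{k+1}=S^Tb$ and lies in $x^k+W$, it coincides with the point of \eqref{eq:geometry}. Finally, \eqref{eq:xZupdate} is pure algebra: substitute $b=Ax^*$ into \eqref{eq:MP}, so that $Ax^k-b = A(x^k-x^*)$, factor out $x^k-x^*$, and subtract $x^*$ from both sides.

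The only genuinely substantive step is the first one---the $B$-orthogonal-complement structure of $V$ and $W$ and the resulting well-posedness of the intersection---since it simultaneously underlies the uniqueness claim and the projection characterizations used for \eqref{eq:NF} and \eqref{eq:RF}; everything else is bookkeeping, with the lone nontrivial ingredient being the range identity $\myRange{S^TAB^{-1}A^TS}=\myRange{S^TA}$ needed to certify that the reduced system behind \eqref{eq:MP} is consistent.
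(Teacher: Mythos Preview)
Your proposal is correct and follows essentially the same route as the paper, which simply points to the preceding discussion and the caption of Figure~\ref{fig:proj}: the $B$-orthogonal-complement relation between $\Null{S^TA}$ and $\myRange{B^{-1}A^TS}$ yields uniqueness of the intersection and simultaneously identifies it as the two projections \eqref{eq:NF} and \eqref{eq:RF}, with \eqref{eq:2systems}, \eqref{eq:MP}, and \eqref{eq:xZupdate} following by the elimination and substitution already sketched in the text. Your write-up is a careful fleshing out of exactly that outline; the only addition beyond what the paper makes explicit is your verification that the reduced system behind \eqref{eq:MP} is consistent, which the paper leaves implicit.
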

\begin{proof} The proof is simple, and follows  directly from the above discussion. In particular, see the caption of Figure~\ref{fig:proj}.
\end{proof}

\subsection{Projection Matrices}

\rob{
 
In this section we state a few key properties of matrix $Z$. This will shed light on the previous discussion and will also be useful later in the convergence proofs. 

Recall that $S$ is a $m\times q$ random matrix (with $q$ possibly being random),  and that $A$ is an $m\times n$ matrix. Let us define the random quantity \begin{equation}\dd \eqdef \Rank{S^T A}\end{equation} and notice that  $d\leq \min\{q,n\}$,
\begin{equation} \label{eq:dimproj}
\dim \left(\myRange{B^{-1}A^T S}\right) = \dd, \qquad \text{and}\qquad \dim\left(\Null{S^TA}\right) = n-\dd.
\end{equation}

\begin{lemma}\label{lem:BZ} With respect to the geometry induced by the $B$-inner product, we have that
\begin{enumerate}
\item[(i)] $B^{-1}Z$ projects orthogonally  onto the $\dd$--dimensional subspace $\myRange{B^{-1}A^TS}$
\item[(ii)] $(I-B^{-1}Z)$ projects orthogonally onto $(n-\dd)$--dimensional subspace $\Null{S^TA}.$  
\end{enumerate}
\end{lemma}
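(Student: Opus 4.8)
The plan is to check that $B^{-1}Z$ satisfies the three defining properties of an orthogonal projector for the $B$-inner product onto $\myRange{B^{-1}A^TS}$: (a) idempotence, $(B^{-1}Z)^2=B^{-1}Z$; (b) $B$-self-adjointness, $\langle B^{-1}Zx,y\rangle_B=\langle x,B^{-1}Zy\rangle_B$ for all $x,y$, which is equivalent to $B(B^{-1}Z)=Z$ being a symmetric matrix; and (c) $\myRange{B^{-1}Z}=\myRange{B^{-1}A^TS}$. Part (ii) will then follow from (i) by a short $B$-orthogonal-complement argument.

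The computations become transparent after the substitution $M\eqdef S^TAB^{-1/2}$, so that $A^TS=B^{1/2}M^T$ and $W\eqdef S^TAB^{-1}A^TS=MM^T$. Using the pseudoinverse identity $(MM^T)^\dagger=(M^\dagger)^TM^\dagger$ together with the fact that $M^\dagger M$ is the (symmetric, idempotent) orthogonal projector onto $\myRange{M^T}$, a one-line calculation gives
\[
Z \;=\; A^TSW^\dagger S^TA \;=\; B^{1/2}(M^\dagger M)B^{1/2}, \qquad\text{hence}\qquad B^{-1}Z \;=\; B^{-1/2}(M^\dagger M)B^{1/2}.
\]
Property (a) is then immediate because the conjugating factors telescope and $M^\dagger M$ is idempotent; property (b) holds because $B\cdot(B^{-1}Z)=Z$ is symmetric, a fact already recorded in the excerpt. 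For (c), since $B^{1/2}$ is invertible and $\myRange{M^\dagger M}=\myRange{M^T}$, we get $\myRange{B^{-1}Z}=B^{-1/2}\myRange{M^T}=\myRange{B^{-1/2}M^T}=\myRange{B^{-1}A^TS}$. This proves (i); the claimed dimension $\dd$ matches \eqref{eq:dimproj} since $\Rank{M}=\Rank{S^TA}=\dd$.

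For (ii), observe that $I-B^{-1}Z$ is again idempotent and, since $B(I-B^{-1}Z)=B-Z$ is symmetric, $B$-self-adjoint; hence it is a $B$-orthogonal projector, and "identity minus the projector onto a subspace" is the projector onto that subspace's $B$-orthogonal complement. It remains to identify $\big(\myRange{B^{-1}A^TS}\big)^{\perp_B}=\Null{S^TA}$: for $h\in\Null{S^TA}$ and arbitrary $y$ we have $\langle h,B^{-1}A^TSy\rangle_B=h^TA^TSy=(S^TAh)^Ty=0$, and the dimensions $\dd$ and $n-\dd$ add up to $n$ by \eqref{eq:dimproj}, so the inclusion is an equality.

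I do not expect a genuine obstacle here; the only care needed is bookkeeping with the pseudoinverse — establishing $Z=B^{1/2}(M^\dagger M)B^{1/2}$ and tracking which of $B^{1/2},B^{-1/2}$ sits on each side — together with the standard fact that conjugating a Euclidean orthogonal projector $P$ by $B^{-1/2}(\cdot)B^{1/2}$ produces an orthogonal projector for the $B$-inner product whose range is $B^{-1/2}\myRange{P}$. As an alternative to the substitution, one can prove (c) directly by noting $\Null{W}=\Null{A^TS}$ (both equal $\{y:B^{-1/2}A^TSy=0\}$, using that $W$ is positive semidefinite) and invoking $W^\dagger WW^\dagger=W^\dagger$.
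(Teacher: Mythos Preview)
Your proof is correct and follows essentially the same route as the paper: both hinge on the substitution $M=S^TAB^{-1/2}$ (the paper uses its transpose $B^{-1/2}A^TS$) and the pseudoinverse identities listed in the paper's footnote. The only difference is packaging: you first derive the clean factorization $B^{-1}Z=B^{-1/2}(M^\dagger M)B^{1/2}$ and read off idempotence, $B$-self-adjointness, and the range from the known Euclidean projector $M^\dagger M$, whereas the paper verifies idempotence directly via $W^\dagger W W^\dagger=W^\dagger$ and then separately checks that $B^{-1}Z$ fixes $\myRange{B^{-1}A^TS}$ and annihilates $\Null{S^TA}$.
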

\begin{proof}
\rob{For any matrix $M$, the pseudoinverse satisfies the identity $M^{\dagger}M M^{\dagger} =M^{\dagger}$. Using this with $M=S^TAB^{-1}A^TS$,  we get
 \begin{align}\label{eq:projectionxx}(B^{-1}Z)^2 &\overset{\eqref{eq:Z-first}}{=}B^{-1}A^TS(S^TAB^{-1}A^TS)^{\dagger}S^TA B^{-1} A^TS(S^TAB^{-1}A^TS)^{\dagger}S^TA \nonumber\\
 & = B^{-1}A^TS(S^TAB^{-1}A^TS)^{\dagger}S^TA \overset{\eqref{eq:Z-first}}{=} B^{-1}Z,
 \end{align} and thus both $B^{-1}Z$ and $I-B^{-1}Z$ are projection matrices.} In order to establish that $B^{-1}Z$ is an orthogonal projection with respect to the $B$-inner product (from which it follows that $I-B^{-1}Z$ is), we will show that
\[B^{-1}Z (B^{-1}A^TS) =B^{-1}A^TS, \quad \quad \mbox{and}\quad \quad B^{-1}Z y =0, \; \forall y \in \Null{S^TA}.\]
The second relation is trivially satisfied. In order to establish the first relation, it is enough to use two further properties of the pseudoinverse:   $(M^T M)^\dagger M^T  = M^\dagger$ and $MM^\dagger M = M$, both with $M=B^{-1/2}A^T S$. Indeed,
\begin{eqnarray*} B^{-1}Z (B^{-1}A^TS) &\overset{\eqref{eq:Z-first}}{=} & B^{-1/2} M (M^T M)^\dagger M^T M \\
&=& B^{-1/2}MM^\dagger M\\
& = & B^{-1/2}M = B^{-1}A^T S.\end{eqnarray*}
\end{proof}
}

\rob{This lemma sheds additional light on Figure~\ref{fig:proj} as it gives explicit expressions for the associated projection matrices.}  The result also implies that $I-B^{-1}Z$ is a {\em contraction} with respect to the $B$-norm, \rob{which means that the random fixed point iteration~\eqref{eq:xZupdate} has only very little room not to work.} While $I-B^{-1}Z$ is not a strict contraction, under some reasonably weak assumptions on $S$ it will be a strict contraction in expectation, which  ensures convergence. \rob{We shall state these assumptions and develop the associated convergence theory for our method in Section~\ref{sec:convergence} and Section~\ref{sec:discrete}.}

\section{Special Cases: Examples} \label{sec:examples}


In this section we briefly mention how by selecting the parameters $S$ and $B$ of our method we recover several existing methods. The list is by no means comprehensive and merely serves the purpose of an illustration of the flexibility of our algorithm. All the associated complexity results we present in this section, can be recovered from Theorem~\ref{theo:convsingleS}, presented later in Section~\ref{sec:discrete}. 

\subsection{The One Step Method} 

When $S$ is an $m\times m$ invertible matrix with probability one, then the system $S^TAx=S^Tb$ is equivalent to solving $Ax=b,$ thus the  solution to~\eqref{eq:NF} must be $x^{k+1}=x^*$, independently of matrix $B.$
Our convergence theorems also predict this one step behaviour, since $\rho =0$ (see Table~\ref{tab:complexity}).


\subsection{Random Vector Sketch}
\rob{When $S = s \in \R^{m}$ is restricted to being a random column vector, then from~\eqref{eq:MP} a step of our method is given by
\begin{equation}\label{eq:vecsketch}x^{k+1} = x^{k}  - \frac{s^T(A x^{k}-b)}{ s^TAB^{-1}A^Ts} B^{-1}A^Ts,
\end{equation}
if $A^Ts \neq 0$ and $x^{k+1} =x^k$ otherwise. This is because the pseudo inverse of a scalar $\alpha \in \R$ is given by \[\alpha^{\dagger} = 
\begin{cases}
1/\alpha  & \mbox{if } \alpha \neq 0\\
0 &  \mbox{if } \alpha = 0.
\end{cases}
\]
Next we describe several well known specializations of the random vector sketch and for brevity, we write the updates in the form of~\eqref{eq:vecsketch} and leave implicit that when the denominator is zero, no step is taken.}
\subsection{Randomized Kaczmarz}

If we choose $S=e^i$ (unit coordinate vector in $\R^m$) and $B=I$ (the identity matrix), in view of \eqref{eq:NF} we obtain the method:
\begin{equation} \label{eq:RKintro} x^{k+1} = \arg\min_{x\in \R^n} \norm{x- x^{k}}_2^2 \quad \mbox{ subject to } \quad  A_{i:}x =b_{i}.
\end{equation}
Using~\eqref{eq:MP}, these iterations can be calculated with
\begin{equation}\label{eq:RKiterate} \boxed{x^{k+1} = x^{k} - \frac{A_{i:} x^{k}-b_{i}}{\norm{A_{i:}}_2^2}(A_{i:})^T} \end{equation}

\paragraph{Complexity} When $i$ is selected at random, this is the randomized Kaczmarz (\emph{RK}) method~\cite{Strohmer2009}. A specific non-uniform probability distribution for $S$ can yield simple and easily interpretable (but not necessarily optimal) complexity bound. In particular, by selecting $i$ with probability proportional to the magnitude of row $i$ of $A$, that is $p_i = \norm{ A_{i:} }_2^2/\norm{A}_F^2$,  it follows from Theorem~\ref{theo:convsingleS} that RK enjoys the following complexity bound:
\begin{equation}\label{eq:RKconv}\E {\norm{x^{k} -x^{*} }_2^2 } \leq  \left(1  -  \frac{\lambda_{\min}\left(A^T A \right)}{\norm{A}_F^2} \right)^k \norm{x^{0} - x^{*}}_2^2.\end{equation}
This result was first established by Strohmer and Vershynin \cite{Strohmer2009}. 
We also provide new convergence results in Theorem~\ref{theo:normEconv}, based on the convergence of the norm of the expected error. Theorem~\ref{theo:normEconv} applied to the RK method gives
\begin{equation}\label{eq:RKconv2}\norm{\E {x^{k} -x^{*} } }_2^2 \leq  \left(1  -  \frac{\lambda_{\min}\left(A^T A \right)}{\norm{A}_F^2} \right)^{2k} \norm{x^{0} - x^{*}}_2^2.\end{equation}
Now the convergence rate appears squared, which is a better rate, though, the expectation has moved inside the norm, which is a weaker form of convergence. 
  
Analogous results for the convergence of the norm of the expected error holds  for all the methods we present, though we only illustrate this with the RK method.

\paragraph{Re-interpretation as SGD with exact line search} Using the Constrain-and-Approximate formulation~\eqref{eq:RF}, the randomized Kaczmarz method can also be written as
\[x^{k+1} =\arg \min_{x\in \R^n} \norm{x- x^*}_2^2 \quad \mbox{ subject to } \quad  x = x^k + y(A_{i:})^T, \quad y \in \R, \]
with probability $p_i$. Writing the least squares function $f(x) = \tfrac{1}{2}\|Ax-b\|_2^2$ as
\[f(x) = \sum_{i=1}^m p_i f_i(x), \qquad f_i(x) = \frac{1}{2 p_i}(A_{i:}x - b_i)^2,\]
we see that the random vector $\nabla f_i(x) = \tfrac{1}{p_i}(A_{i:}x-b_i) (A_{i:})^T$ is an unbiased estimator of the  gradient of $f$ at $x$. That is,  $\E{\nabla f_i(x)} = \nabla f(x)$. Notice that RK takes a step in the direction $-\nabla f_i(x)$. This is true even when $A_{i:}x-b_i = 0$, in which case, the RK does not take any step. Hence, RK takes a step in the direction of the negative stochastic gradient. This means that it is equivalent to the Stochastic Gradient Descent (SGD) method. However,  the stepsize choice is very special: RK chooses the stepsize which leads to the point which is closest to $x^*$ in the Euclidean norm.



\subsection{Randomized Coordinate Descent: positive definite case}  \label{sec:shs98ss}
 
 If $A$ is positive definite, then we can choose $B= A$ and $S = e^i$  in~\eqref{eq:NF}, which results in 
\begin{equation} \label{eq:CDpdintro}
x^{k+1} \eqdef \arg\min_{x\in \R^n} \norm{x- x^{k}}_A^2 \quad \mbox{subject to} \quad (A_{i:})^T x =b_{i},
\end{equation}
where we used the symmetry of $A$ to get $(e^i)^TA = A_{i:}=(A_{:i})^T.$
The solution to the above, given by~\eqref{eq:MP}, is 
\begin{equation}\label{eq:09j0s9jsss}
\boxed{x^{k+1} = x^{k} -  \frac{(A_{i:})^Tx^{k}-b_i}{A_{ii}}e^{i}}\end{equation}
\paragraph{Complexity} When $i$ is chosen randomly, this is the  \emph{Randomized CD} method (CD-pd). Applying Theorem~\ref{theo:convsingleS}, we see the probability distribution $p_i = A_{ii}/\Tr{A}$
results in a convergence with
\begin{equation}\label{eq:CDposconv}
\E {\norm{x^{k} -x^{*} }_{A}^2 } \leq \left(1-  \frac{\lambda_{\min}\left(A \right)}{\Tr{A}}\right)^k \norm{x^{0} - x^{*}}_{A}^2.\end{equation}
  This result was first established by Leventhal and Lewis~\cite{Leventhal2010}.
  
\paragraph{Interpretation} Using the Constrain-and-Approximate formulation~\eqref{eq:RF}, this method can be interpreted as
\begin{equation} \label{eq:CDpdRF}
x^{k+1} =\arg \min \|x-x^*\|_A^2\quad \mbox{ subject to } \quad  x = x^k + y e^i, \quad y \in \R, \end{equation}
with probability $p_i$. It is easy to check that the function $f(x)=\tfrac{1}{2}x^T A x \rob{-} b^T x$ satisfies: $\|x-x^*\|_A^2 = 2f(x) + b^T x^*$. Therefore,  \eqref{eq:CDpdRF} is equivalent to
\begin{equation} \label{eq:CDpdRFxx}
x^{k+1} =\arg \min f(x) \quad \mbox{ subject to } \quad  x = x^k + y e^i, \quad y\in \R. \end{equation}
The iterates~\eqref{eq:09j0s9jsss} can also be written as
\[x^{k+1} = x^k - \frac{1}{L_i}\nabla_{i} f(x^k) e^i,\]
where $L_i = A_{ii}$ is the Lipschitz constant of the gradient of $f$ corresponding to coordinate $i$ and $\nabla_{i} f(x^k)$ is the $i$th partial derivative of $f$ at $x^k$.

\subsection{Randomized Block Kaczmarz}

Our framework also extends to new block formulations of the randomized Kaczmarz method. Let   $R$ be a random subset of $[m]$ and let $S =I_{:R}$ be a column concatenation of the columns of the $m\times m$ identity matrix $I$ indexed by $R$. Further, let $B=I$. Then \eqref{eq:NF} specializes to

\[
 x^{k+1}  =  \arg\min_{x\in \R^n} \norm{x- x^{k}}_2^2 \quad \mbox{subject to} \quad  A_{R:}x =  b_{R} .
\]
In view of~\eqref{eq:MP}, this can be equivalently written as 
\begin{equation}\label{eq:blockRK}\boxed{x^{k+1}=x^k - (A_{R:})^T (A_{R:} (A_{R:})^T)^{\dagger}(A_{R:}x^k - b_{R})} \end{equation}

\paragraph{Complexity} From Theorem~\ref{theo:Enormconv} we obtain the following new complexity result:

\[\E{\|x^{k}-x^*\|_2^2} \leq  \left(1-\lambda_{\min}\left(\E{(A_{R:})^T (A_{R:} (A_{R:})^T)^{\rob{\dagger}}A_{R:}}\right)\right)^k\|x^0-x^*\|^2_2.\]

\rob{
To obtain a more meaningful convergence rate, we would need to bound the smallest eigenvalue of $\E{(A_{R:})^T (A_{R:} (A_{R:})^T)^{\rob{\dagger}}A_{R:}}.$ This has been done in~\cite{Needell2012a,Needell2014} when the image of $R$ defines a row paving of $A$.  Our framework paves the way for analysing the convergence of new block methods for a large set of possible random subsets $R,$ including, for example, overlapping partitions. }

 \subsection{Randomized Newton: positive definite case}
  
 If $A$ is symmetric positive definite, then we can choose $B= A$ and $S = I_{:C}$, a column concatenation of the columns of $I$ indexed by $C$, which is a random subset  of $[n]$.  In view of~\eqref{eq:NF}, this results in 
\begin{equation} \label{eq:CDpdblock}
x^{k+1} \eqdef \arg\min_{x\in \R^n} \norm{x- x^{k}}_A^2 \quad \mbox{subject to} \quad (A_{:C})^T x =b_{C}.
\end{equation}
In view of \eqref{eq:MP}, we can equivalently write the method as
  \begin{equation} \boxed{ \quad x^{k+1}  \quad = \quad x^k -  I_{:C} ((I_{:C})^T A I_{:C})^{-1}  (I_{:C})^T (Ax^k - b) \quad }\label{eq:Method1}
 \end{equation}

\paragraph{Complexity} Clearly, iteration \eqref{eq:Method1} is well defined  as long as $C$ is nonempty with probability 1. Such $C$  is in \cite{Richtarik2015} referred to by the name ``non-vacuous'' sampling. From Theorem \ref{theo:Enormconv} we obtain the following convergence rate:
\begin{eqnarray}\label{eq:Method1xxx}
\E { \norm{x^{k} -x^{*}}_{A}^2 } & \leq & \rho^k \|x^0 - x^*\|_A^2 \notag \\
&=&
\left(1-  \lambda_{\min}\left( \E{ I_{:C}  ((I_{:C})^T A I_{:C})^{-1} (I_{:C})^T A} \right)\right)^k \norm{x^{0} - x^{*}}_{A}^2.
\end{eqnarray}

The convergence rate of this particular method was first established \rob{and studied} in \cite{Richtarik2015}.  Moreover, it was shown in \cite{Richtarik2015} that $\rho<1$ if one additionally assumes that the probability that $i \in C$ is positive for each column $i\in [n]$, i.e., that $C$ is a ``proper'' sampling.


\paragraph{Interpretation}   Using  formulation~\eqref{eq:RF}, and in view of the equivalence between $f(x)$ and $\|x-x^*\|_A^2$ discussed in Section~\ref{sec:shs98ss},  the Randomized Newton method can be equivalently written as
\[x^{k+1} =\arg \min_{x\in \R^n} f(x) \quad \mbox{ subject to } \quad  x = x^k + I_{:C}\, y, \quad y \in \R^{|C|}. \]
The next iterate is determined by advancing from the previous iterate over a subset of coordinates such that $f$ is minimized. Hence, an exact line search is performed in a random $|C|$ dimensional subspace.

Method~\eqref{eq:Method1} was fist studied by Qu et al~\cite{Richtarik2015}, and referred therein as  ``Method~1'', or {\em Randomized Newton Method}. The name comes from the observation that the method inverts random principal submatrices of $A$ and that in the special case when $C=[n]$ with probability 1, it specializes to the Newton method (which in this case converges in a single step). 
\rob{The expression $\rho$ defining the convergence rate of this method is rather involved and it is not immediately obvious what is gained by performing a search in a higher dimensional subspace ($C>1$) rather than in the one-dimensional subspaces ($C=1$), as is standard in the optimization literature. Let us write  $\rho = 1-\sigma_\tau$ in the case when the $C$ is chosen to be a subset of $[n]$ of size $\tau$, uniformly at random. In view of \eqref{eq:itercomplex}, the method takes $\tilde{O}(1/\sigma_\tau)$ iterations to converge, where the tilde notation suppresses logarithmic terms. It was shown in \cite{Richtarik2015}  that $1/\sigma_\tau \leq 1/(\tau \sigma_1)$. That is, one can expect to obtain at least {\em superlinear speedup} in $\tau$ --- this is what is gained by moving to blocks / higher dimensional subspaces. For further details and additional properties of the method we refer the reader to \cite{Richtarik2015}. }

\subsection{Randomized Coordinate Descent: least-squares version} 

By choosing $S=Ae^{i} =:A_{:i}$ as the $i$th column of $A$ and $B=A^TA$, the resulting iterates~\eqref{eq:RF} are given by
\begin{equation} \label{eq:CDLSintro}
x^{k+1} = \arg\min_{x\in \R^n} \norm{Ax-b}_2^2 \quad \mbox{ subject to } \quad  x = x^{k} + y \, e^{i}, \quad y \in \R.
\end{equation}
When $i$ is selected at random, this is the Randomized Coordinate Descent method (\emph{CD-LS})  applied to the least-squares problem: $\min_x \|Ax-b\|_2^2$.  Using~\eqref{eq:MP}, these iterations can be calculated with
\begin{equation}\label{eq:098sh98hs} \boxed{x^{k+1} = x^{k} - \frac{(A_{:i})^T(A x^{k} -b)}{\norm{A_{:i}}_2^2} e^{i} } \end{equation}

\paragraph{Complexity}
Applying Theorem~\ref{theo:convsingleS}, we see that by selecting $i$ with probability proportional to magnitude of column $i$ of $A$, that is $p_i = \norm{ A_{:i} }_2^2/\norm{A}_F^2$, 
results in a convergence with
\begin{equation}\label{eq:CDLSconv}
\E {\norm{x^{k} -x^{*} }_{A^TA}^2 } \leq \rho^k \|x^0-x^*\|^2_{A^T A} =   \left(1  -  \frac{\lambda_{\min}\left(A^T A \right)}{\norm{A}_F^2} \right)^k \norm{x^{0} - x^{*}}_{A^TA}^2.\end{equation}
  This result was first established by Leventhal and Lewis~\cite{Leventhal2010}.
  
\paragraph{Interpretation} 
  Using the Constrain-and-Approximate formulation~\eqref{eq:RF}, the CD-LS method can be interpreted as
\begin{equation} \label{eq:CDLSRF}
x^{k+1} =\arg \min_{x\in \R^n} \norm{x- x^*}_{A^TA}^2 \quad \mbox{ subject to } \quad  x = x^k + y e^i , \quad y \in \R.\end{equation}
The  CD-LS method selects a coordinate to advance from the previous iterate $x^k$, then performs an exact minimization of the least squares function over this line.
This is equivalent to applying coordinate descent to the least squares problem $\min_{x\in \R^n} f(x) \eqdef \tfrac{1}{2}\|Ax-b\|_2^2.$ The iterates~\eqref{eq:CDLSintro} can be written as
\[x^{k+1} =x^k -\frac{1}{L_i}\nabla_i f(x^k) e^i,\] 
 where $L_i \eqdef \norm{A_{:i}}_2^2$ is the Lipschitz constant of the gradient 
 corresponding to coordinate $i$ and $\nabla_{i} f(x^k)$ is the $i$th partial derivative of $f$ at $x^k$.

%

\section{Convergence: General Theory} \label{sec:convergence}

We shall present two complexity theorems: we first study the convergence of $\norm{\E{x^{k}-x^*}}$ , and then move on to analysing the convergence of $\E{\norm{x^{k}-x^*}}$. 


\subsection{Two types of convergence}
The following lemma explains the relationship between the convergence of the norm of the expected error and the expected norm of the error.
\begin{lemma} \label{lem:conv} Let $x\in \R^n$ be a random vector, $\norm{\cdot}$ a  norm induced by an inner product and fix $x^*\in \R^n$. Then
\[ \big\|\E{x-x^*}\big\|^2   = \E{\left\| x - x^* \right\|^2}  - \E{\left\| x - \E{x}\right\|^2}.\]
\end{lemma}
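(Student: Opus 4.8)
The plan is to prove this by a direct bias--variance style decomposition, exploiting the fact that $\|\cdot\|^2$ comes from an inner product. First I would denote $\mu \eqdef \E{x}$ and write the trivial algebraic identity
\[
x - x^* = (x - \mu) + (\mu - x^*),
\]
which holds pointwise (for each realization of the random vector $x$). Since $\mu - x^*$ is a deterministic vector, this splits the error into a mean-zero random part $x - \mu$ and a deterministic part $\mu - x^*$.

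Next I would take $B$-type norms squared and expand using the bilinearity of the underlying inner product $\dotprod{\cdot,\cdot}$:
\[
\norm{x-x^*}^2 = \norm{x-\mu}^2 + 2\dotprod{x-\mu,\,\mu-x^*} + \norm{\mu-x^*}^2.
\]
Then I would apply $\E{\cdot}$ to both sides. The key observation is that the cross term vanishes in expectation: since $\mu - x^*$ is deterministic, $\E{\dotprod{x-\mu,\,\mu-x^*}} = \dotprod{\E{x-\mu},\,\mu-x^*} = \dotprod{0,\,\mu-x^*} = 0$, because $\E{x-\mu} = \E{x} - \mu = 0$. This uses that expectation commutes with the (linear in each argument) inner product against a fixed vector. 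The term $\norm{\mu - x^*}^2$ is deterministic, so its expectation is itself, and by definition $\mu = \E{x}$, so $\norm{\mu-x^*}^2 = \big\|\E{x-x^*}\big\|^2$. Rearranging
\[
\E{\norm{x-x^*}^2} = \E{\norm{x-\mu}^2} + \big\|\E{x-x^*}\big\|^2
\]
gives exactly the claimed identity.

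There is essentially no hard step here; the only thing to be a little careful about is justifying that one may pull expectation through the inner product and that all quantities are finite — one tacitly assumes $x$ has finite second moment, which is harmless in this context and anyway holds for the iterates of the method since each $x^{k+1}$ is obtained from $x^k$ by an affine map with a bounded (contraction) linear part. If one wanted to be fully explicit, one could instead expand in a fixed orthonormal basis for the inner product and reduce to the classical scalar identity $\E{Y^2} - (\E{Y})^2 = \V{}{Y} \ge 0$ coordinatewise, then sum; but the coordinate-free argument above is cleaner and is the one I would write down.
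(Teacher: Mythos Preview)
Your proof is correct and follows essentially the same bias--variance decomposition as the paper: the paper starts from the variance identity $\E{\norm{x-\E{x}}^2}=\E{\norm{x}^2}-\norm{\E{x}}^2$ and then adds and subtracts $\norm{x^*}^2-2\dotprod{\E{x},x^*}$ to complete the squares, whereas you obtain the same identity by writing $x-x^*=(x-\mu)+(\mu-x^*)$ and observing the cross term vanishes in expectation. These are two presentations of the same elementary computation.
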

\begin{proof} Note that
$\E{\norm{ x - \E{x}}^2}= \E{\norm{x}^2} - \norm{\E{x}}^2$.
Adding and subtracting $\norm{x^*}^2-2\dotprod{\E{x},x^*}$ from the right hand side and grouping the appropriate terms yields the desired result. 
\end{proof} 
   
To interpret this lemma, note that
$\E{\left\| x - \E{x}\right\|^2} =\sum_{i=1}^n\E{ (x_i - \E{x_i})^2}  = \sum_{i=1}^n \mathbf{Var}(x_i)$,
where $x_i$ denotes the $i$th element of $x.$
This lemma shows that the convergence of $x$ to $x^*$ under the expected norm of the error is a stronger form of convergence than the convergence of the norm of the expected error, as the former also guarantees that the variance of $x_i$  converges to zero, for $i=1,\ldots ,n.$

\subsection{The Rate of Convergence}

All of our convergence theorems (see Table~\ref{tab:complexity}) depend on the  convergence rate
\begin{equation}\label{eq:rho}\rho \eqdef 1 - \lambda_{\min}(B^{-1}\E{Z}) = 1 - \lambda_{\min}(B^{-1/2}\E{Z}B^{-1/2}).\end{equation} 
To show that the rate is meaningful, in Lemma~\ref{lem:rho1} we prove that $0\leq \rho \leq 1$. We also provide a meaningful lower bound for $\rho$.

\begin{lemma}\label{lem:rho1} The quantity $\rho$ defined in~\eqref{eq:rho} satisfies:
\begin{equation} \label{eq:rho_lower}0 \leq 1-\dfrac{\E{d}}{n}\leq \rho \leq 1,\end{equation}
\rob{where $\dd=\Rank{S^TA}$. }
\end{lemma}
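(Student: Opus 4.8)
The plan is to exploit Lemma~\ref{lem:BZ}, which identifies $B^{-1}Z$ (equivalently, $W \eqdef B^{-1/2}ZB^{-1/2}$, since these are similar) as an orthogonal projection onto a $\dd$-dimensional subspace. In particular, each realization of $W$ is a symmetric matrix whose eigenvalues are all $0$ or $1$, with exactly $\dd$ of them equal to $1$. Hence $0 \preceq W \preceq I$ as a matrix inequality, and $\Tr{W} = \dd$. Taking expectations preserves both the semidefinite ordering and the trace, so $0 \preceq \E{W} \preceq I$ and $\Tr{\E{W}} = \E{\dd}$. From $\E{W} \preceq I$ we get $\lambda_{\min}(\E{W}) \leq 1$, hence $\rho = 1 - \lambda_{\min}(\E{W}) \geq 0$; from $\E{W} \succeq 0$ we get $\lambda_{\min}(\E{W}) \geq 0$, hence $\rho \leq 1$. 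This yields the outer bounds $0 \leq \rho \leq 1$.

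For the nontrivial lower bound $\rho \geq 1 - \E{\dd}/n$, I would argue that the smallest eigenvalue of an $n\times n$ symmetric positive semidefinite matrix is at most its average eigenvalue, i.e. $\lambda_{\min}(\E{W}) \leq \tfrac{1}{n}\Tr{\E{W}}$. Combining this with $\Tr{\E{W}} = \E{\dd}$ gives $\lambda_{\min}(\E{W}) \leq \E{\dd}/n$, and therefore $\rho = 1 - \lambda_{\min}(\E{W}) \geq 1 - \E{\dd}/n$. Finally, since $\dd = \Rank{S^TA} \leq n$ deterministically, we have $\E{\dd} \leq n$, so $1 - \E{\dd}/n \geq 0$, which establishes the leftmost inequality in~\eqref{eq:rho_lower} and completes the chain.

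The only mildly delicate points are bookkeeping ones: first, that $\rho$ can equivalently be written using $\lambda_{\min}$ of $B^{-1/2}\E{Z}B^{-1/2} = \E{W}$ rather than $B^{-1}\E{Z}$ — this is already recorded in~\eqref{eq:rho} and follows because $B^{-1}\E{Z}$ and $B^{-1/2}\E{Z}B^{-1/2}$ are similar via $B^{1/2}$; and second, that expectation commutes with trace and preserves $\succeq$, which is standard (the trace is linear, and $v^T\E{W}v = \E{v^TWv} \geq 0$ for every fixed $v$). The substantive input is entirely Lemma~\ref{lem:BZ}: once we know $W$ is an orthogonal projection of rank $\dd$ in each realization, everything reduces to the elementary fact that $\lambda_{\min}$ of a PSD matrix lies between $0$ and its normalized trace. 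I do not anticipate a genuine obstacle here; if anything, the main thing to be careful about is not conflating the random rank $\dd$ with its expectation, and making sure the trace identity $\Tr{W} = \dd$ is applied before, not after, taking expectations.
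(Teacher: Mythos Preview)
Your proposal is correct and follows essentially the same route as the paper: both arguments use Lemma~\ref{lem:BZ} to identify $B^{-1/2}ZB^{-1/2}$ as a rank-$\dd$ orthogonal projection, deduce that its spectrum lies in $\{0,1\}$ and its trace equals $\dd$, and then combine expectation with the elementary bound $\lambda_{\min}\leq \Tr{\cdot}/n$. The only cosmetic difference is that the paper phrases the outer bounds via Jensen's inequality for the convex maps $M\mapsto \lambda_{\max}(M)$ and $M\mapsto -\lambda_{\min}(M)$, whereas you pass directly through the Loewner ordering $0\preceq W\preceq I$ and take expectations; these are equivalent formulations of the same step.
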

\begin{proof}
Since the mapping $A \mapsto \lambda_{\max}(A)$ is convex on the set of symmetric matrices,  by Jensen's inequality we get
\begin{equation}\label{eq:a89h93a8}\lambda_{\max} (\E{B^{-1}Z})= \rob{\lambda_{\max} (B^{-1/2} \E{Z}B^{-1/2})} \leq\E{\lambda_{\max}(B^{-1/2}ZB^{-1/2})}.
\end{equation}
Recalling from Lemma~\ref{lem:BZ} that $B^{-1}Z$ is a projection, \rob{we get \[B^{-1/2}ZB^{-1/2} (B^{-1/2}ZB^{-1/2}) =B^{-1/2}ZB^{-1/2},  \]
whence the spectrum of $B^{-1/2}ZB^{-1/2}$ is contained in $\{0,1\}$. Thus,  $\lambda_{\max}(B^{-1/2}ZB^{-1/2})~\leq~1$,} and from~\eqref{eq:a89h93a8} we conclude that $\lambda_{\max}(B^{-1} \E{Z}) \leq 1$.
The inequality $\lambda_{\min}(B^{-1}\E{Z})~\geq~0$ can be shown analogously using convexity of the mapping $A\mapsto -\lambda_{\min}(A)$. Thus \[\lambda_{\min}(B^{-1}\E{Z}) = \lambda_{\min}(B^{-1/2}\E{Z}B^{-1/2}) \in [0, 1]\] and consequentially $0 \leq \rho \leq 1.$
 As the trace of a matrix is equal to the sum of its eigenvalues, we have
\begin{equation}\label{eq:traceb1} \E{\Tr{B^{-1}Z}} =\Tr{\E{B^{-1}Z}} \geq n \, \lambda_{\min}(\E{B^{-1}Z}). \end{equation}
As $B^{-1}Z$ projects onto a $\dd$--dimensional subspace (Lemma~\ref{lem:BZ}) we have $\Tr{B^{-1}Z} = \dd.$ 
Thus  rewriting~\eqref{eq:traceb1} gives $1-\E{\dd}/n \leq \rho.$
\end{proof}

The lower bound on $\rho$ in item 1 has a natural interpretation which makes intuitive sense. We shall present it from the perspective of the Constrain-and-Approximate formulation~\eqref{eq:RF}. As the dimension ($\dd$) of the search space $B^{-1}A^TS$ increases (see \eqref{eq:dimproj}), the lower bound on $\rho$ decreases, and a faster convergence is possible. For instance, when $S$ is restricted to being a random column vector, as it is in the RK~\eqref{eq:RKiterate}, CD-LS~\eqref{eq:098sh98hs} and CD-pd~\eqref{eq:CDposconv} methods, the convergence rate is bounded with $1 -1/n\leq \rho.$ Using~\eqref{eq:itercomplex}, this translates into the simple iteration complexity bound of $k \geq n \log(1/\epsilon)$. On the other extreme, when the search space is large, then the lower bound  is close to zero, allowing room for the method to be faster.

We now characterize circumstances under which $\rho$ is strictly smaller than one.

\begin{lemma}\label{lem:rho} If $\E{Z}$ is invertible, then $\rho <1$, $A$ has full column rank and $x^*$ is unique.
\end{lemma}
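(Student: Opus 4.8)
The plan is to unpack the hypothesis that $\E{Z}$ is invertible and extract each of the three conclusions in turn, using the structural facts about $Z$ already established. First I would recall from the definition \eqref{eq:Z-first} and the footnote that $Z = (S^TAB^{-1/2})^\dagger$-type construction, so that $Z = B^{1/2}(B^{-1}Z)B^{1/2}$ where $B^{-1}Z$ is an orthogonal projection (Lemma~\ref{lem:BZ}), hence $Z$ is symmetric positive semidefinite and $\E{Z}$ is too. Invertibility of $\E{Z}$ therefore upgrades to positive definiteness: $\lambda_{\min}(\E{Z}) > 0$, and since $B \succ 0$ we get $\lambda_{\min}(B^{-1/2}\E{Z}B^{-1/2}) > 0$ as well, which by the definition \eqref{eq:rho} of $\rho$ gives $\rho < 1$ immediately.

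Next I would handle the claim that $A$ has full column rank. The natural route is to show that if $A$ did \emph{not} have full column rank, then $\E{Z}$ would be singular, contradicting the hypothesis. Concretely, pick $0 \neq v \in \Null{A}$. Then for \emph{every} realization of $S$ we have $S^TAv = 0$, so $Zv = A^TS(S^TAB^{-1}A^TS)^\dagger S^T A v = 0$ pointwise, and therefore $\E{Z}v = 0$. This exhibits a nonzero vector in the nullspace of $\E{Z}$, contradicting invertibility. Hence $\Null{A} = \{0\}$, i.e., $A$ has full column rank.

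Finally, full column rank of $A$ means the map $x \mapsto Ax$ is injective, so the consistent system $Ax = b$ has exactly one solution; combined with the standing assumption that the system is consistent, $x^*$ is unique. This last step is essentially a one-line remark.

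I do not anticipate a serious obstacle here; the only point requiring a little care is the direction of the argument for the rank claim — one wants the contrapositive (non-full-rank $\Rightarrow$ $\E{Z}$ singular), exploiting the fact that any $v \in \Null{A}$ lies in $\Null{Z}$ for \emph{every} sample of $S$, so that the expectation inherits the degeneracy. Everything else follows from already-proven properties of $Z$ (symmetry, positive semidefiniteness, the projection identity in Lemma~\ref{lem:BZ}) and the positive definiteness of $B$.
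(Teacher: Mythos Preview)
Your proposal is correct and follows essentially the same argument as the paper: positive semidefiniteness of $\E{Z}$ plus invertibility gives positive definiteness and hence $\rho<1$; the contrapositive argument via a nonzero $v\in\Null{A}$ annihilated by every realization of $Z$ yields full column rank; and uniqueness of $x^*$ then follows from consistency. The only difference is that you spell out the PSD justification in slightly more detail than the paper does.
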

\begin{proof}
Assume that $\E{Z}$ is invertible. First, this means that $B^{-1/2}\E{Z}B^{-1/2}$ is positive definite, which in view of \eqref{eq:rho} means that $\rho <1.$ If $A$ did not have full column rank, then there would be $0\neq x\in \R^n$ such that $Ax=0$. However, we then have $Zx = 0$ and also $\E{Z}x=0$, contradicting the assumption that $\E{Z}$ is invertible. Finally, since $A$ has full column rank, $x^*$ must be unique (recall that we assume throughout the paper that the system $Ax=b$ is consistent).
\end{proof}

\subsection{Exact Characterization and Norm of  Expectation}

We now state a theorem which exactly characterizes the evolution of the expected iterates through a linear fixed point iteration. As a consequence, we obtain a convergence result for the norm of the expected error.  While we do not highlight this in the text, this theorem can be applied to all the particular instances of our general method we detail throughout this paper.


For any $M\in \R^{n\times n}$ let us define
\begin{equation}\label{eq:specnorm}
\|M\|_B \eqdef \max_{\norm{x}_B=1} \|Mx\|_B.
\end{equation}

\begin{theorem}[Norm of expectation]\label{theo:normEconv} For every  $x^*\in \R^n$ satisfying $Ax=b$ we have
\begin{equation} \label{eq:Eerror}
\E {x^{k+1} -x^{*}} = \left(I  - B^{-1}\E{Z}\right) \E{x^{k} - x^{*}}.
 \end{equation}
Moreover, the spectral radius and the induced $B$-norm of the iteration matrix $I-B^{-1}\E{Z}$ are both equal to $\rho$:
\[\lambda_{\max}(I-B^{-1}\E{Z}) = \|I-B^{-1}\E{Z}\|_B = \rho.\]
Therefore,
 \begin{equation} \label{eq:normEconv}
\norm{\E {x^{k} -x^{*}} }_B \leq\rho^{k} \norm{x^{0} - x^{*}}_B.
 \end{equation}
\end{theorem}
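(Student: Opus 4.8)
The plan is to establish the three assertions of Theorem~\ref{theo:normEconv} in sequence, deriving each from the structural facts already available in the excerpt --- chiefly the random fixed point identity~\eqref{eq:xZupdate}, the projection property of $B^{-1}Z$ from Lemma~\ref{lem:BZ}, and the bound $0\le\rho\le 1$ from Lemma~\ref{lem:rho1}.

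\textbf{Step 1: the exact recurrence.} I would start from~\eqref{eq:xZupdate}, namely $x^{k+1}-x^* = (I-B^{-1}Z)(x^k-x^*)$, which holds pointwise for every realization of the random matrix $S$ drawn at iteration $k$. Since $S$ at step $k$ is drawn i.i.d.\ and hence independently of $x^k$ (which depends only on $S$ from earlier iterations), taking expectations factorizes: $\E{x^{k+1}-x^*} = \E{(I-B^{-1}Z)(x^k-x^*)} = (I-B^{-1}\E{Z})\E{x^k-x^*}$. This gives~\eqref{eq:Eerror}; iterating it from $k=0$ yields $\E{x^k-x^*} = (I-B^{-1}\E{Z})^k(x^0-x^*)$.

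\textbf{Step 2: identifying the norm of the iteration matrix with $\rho$.} The matrix $M\eqdef I-B^{-1}\E{Z}$ is not symmetric, but it is self-adjoint with respect to the $B$-inner product, since $B^{1/2}MB^{-1/2} = I - B^{-1/2}\E{Z}B^{-1/2}$ is symmetric (recall $\E{Z}$ is symmetric PSD). Hence the induced $B$-norm $\|M\|_B$ equals the spectral radius of $B^{1/2}MB^{-1/2} = I - B^{-1/2}\E{Z}B^{-1/2}$, and this matrix also shares its spectrum with $M$ itself. By Lemma~\ref{lem:rho1} the eigenvalues of $B^{-1/2}\E{Z}B^{-1/2}$ lie in $[0,1]$, so the eigenvalues of $I-B^{-1/2}\E{Z}B^{-1/2}$ lie in $[1-\lambda_{\max}(B^{-1/2}\E{Z}B^{-1/2}),\,1-\lambda_{\min}(B^{-1/2}\E{Z}B^{-1/2})]\subseteq[0,1]$, and the largest in absolute value is exactly $1-\lambda_{\min}(B^{-1/2}\E{Z}B^{-1/2}) = \rho$. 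This simultaneously gives $\lambda_{\max}(M) = \|M\|_B = \rho$.

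\textbf{Step 3: the convergence bound.} Combining Steps 1 and 2, $\norm{\E{x^k-x^*}}_B = \norm{M^k(x^0-x^*)}_B \le \|M\|_B^k\,\norm{x^0-x^*}_B = \rho^k\norm{x^0-x^*}_B$, using submultiplicativity of the induced norm~\eqref{eq:specnorm}. The main (and really only) subtlety I anticipate is Step 2: one must be careful that $\|M\|_B$ equals a spectral radius only because $M$ is $B$-self-adjoint --- for a general matrix the induced norm is a largest singular value, not a largest eigenvalue --- so the argument hinges on conjugating by $B^{1/2}$ to reduce to the symmetric case, where operator norm and spectral radius coincide. Everything else is a direct assembly of lemmas already proved.
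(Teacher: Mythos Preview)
Your proposal is correct and follows essentially the same route as the paper: derive~\eqref{eq:Eerror} by taking expectations in the fixed-point identity~\eqref{eq:xZupdate}, then identify $\|I-B^{-1}\E{Z}\|_B$ with $\rho$ by conjugating with $B^{1/2}$ to reduce to the symmetric matrix $I-B^{-1/2}\E{Z}B^{-1/2}$, and finally unroll. The only cosmetic difference is that you explicitly invoke Lemma~\ref{lem:rho1} to ensure the eigenvalues of $B^{-1/2}\E{Z}B^{-1/2}$ lie in $[0,1]$ (so that the largest eigenvalue of $I-B^{-1/2}\E{Z}B^{-1/2}$ coincides with its spectral radius), whereas the paper leaves this implicit; your care here is well placed.
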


\begin{proof}
Taking expectations conditioned on $x^k$ in~\eqref{eq:xZupdate}, we get
\begin{equation}\label{eq:0suj9sj}\E{x^{k+1}-x^* \;|\; x^k} = (I-B^{-1}\E{Z})(x^k-x^*).\end{equation}
Taking expectation again gives
\begin{eqnarray*}
\E{x^{k+1}-x^*} &=& \E{\E{x^{k+1}-x^* \;|\; x^k}} \\
&\overset{\eqref{eq:0suj9sj}}{=}& \E{(I-B^{-1}\E{Z})(x^k-x^*)} \\ &=& (I-B^{-1}\E{Z})\E{x^k-x^*}.\end{eqnarray*}
Applying the norms to both sides we obtain the estimate
\[\norm{\E {x^{k+1} -x^{*}} }_B \leq \norm{I-B^{-1}\E{Z}}_B \, \norm{\E {x^{k} -x^{*}} }_B. \]
It remains to prove that $\rho = \norm{I-B^{-1}\E{Z}}_B$ and then unroll the recurrence.
According to the definition of operator norm~\eqref{eq:specnorm}, we have
\begin{align*}
\norm{I-B^{-1}\E{Z}}_B^2 &=  \max_{\norm{B^{1/2}x}_2=1} \norm{B^{1/2}(I-B^{-1}\E{Z})x }_2^2.
\end{align*}																		
Substituting $B^{1/2}x =y$ in the above gives
\begin{align*}
\norm{I-B^{-1}\E{Z}}_B^2 &= \max_{\norm{y}_2=1} \norm{B^{1/2}(I-B^{-1}\E{Z})B^{-1/2}y}_2^2 \nonumber \\ 
&= \max_{\norm{y}_2=1} \norm{(I-B^{-1/2}\E{Z}B^{-1/2})y}_2^2 \nonumber\\
&= \lambda_{\max}^2 (I-B^{-1/2}\E{Z}B^{-1/2})\\
&=  \left(1-\lambda_{\min}(B^{-1/2}\E{Z}B^{-1/2})\right)^2  = \rho^2,\nonumber
\end{align*}
where in the third equality we used the symmetry of $(I-B^{-1}\E{Z}B^{-1})$ when passing from the operator norm
to the spectral radius. Note that the symmetry of $\E{Z}$ derives from the symmetry of $Z$. 
\end{proof}


\subsection{Expectation of  Norm}
We now turn to analysing the convergence of the expected norm of the error, for which we need the following technical lemma.

\begin{lemma} \label{lem:y}
If $\E{Z}$ is positive definite,  then 
\begin{equation}\label{eq:EZyy} \dotprod{\E{Z}y,y} \geq (1-\rho)\norm{y}_{B}^2, \quad \forall y \in \R^n. \end{equation}
\end{lemma}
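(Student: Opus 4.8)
The identity $1-\rho = \lambda_{\min}(B^{-1/2}\E{Z}B^{-1/2})$ follows directly from the definition~\eqref{eq:rho}, so the claim~\eqref{eq:EZyy} is equivalent to the assertion that
\[\dotprod{\E{Z}y,y} \;\geq\; \lambda_{\min}\!\left(B^{-1/2}\E{Z}B^{-1/2}\right)\norm{y}_B^2, \qquad \forall y\in\R^n.\]
The plan is to reduce this to the standard Rayleigh-quotient lower bound for a symmetric matrix by an invertible change of variables.

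First I would substitute $y = B^{-1/2}w$; since $B$ (and hence $B^{1/2}$) is invertible, this parametrizes all of $\R^n$ as $w$ ranges over $\R^n$. Under this substitution the right-hand side becomes $\norm{y}_B^2 = y^TBy = w^Tw = \norm{w}_2^2$, while the left-hand side becomes $\dotprod{\E{Z}y,y} = y^T\E{Z}y = w^T\!\left(B^{-1/2}\E{Z}B^{-1/2}\right)w$. Recall from the discussion after~\eqref{eq:Z-first} that $Z$ is symmetric, hence so is $\E{Z}$ and so is $B^{-1/2}\E{Z}B^{-1/2}$.

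Then I would invoke the elementary fact that for any symmetric matrix $M$ one has $w^TMw \geq \lambda_{\min}(M)\norm{w}_2^2$ for all $w$ (e.g.\ via the spectral decomposition of $M$, or the variational characterization of $\lambda_{\min}$). Applying this with $M = B^{-1/2}\E{Z}B^{-1/2}$ gives $w^T\!\left(B^{-1/2}\E{Z}B^{-1/2}\right)w \geq \lambda_{\min}(B^{-1/2}\E{Z}B^{-1/2})\norm{w}_2^2$, which is exactly the inequality we need after translating back through $w = B^{1/2}y$. The hypothesis that $\E{Z}$ is positive definite is only needed to guarantee that the constant $1-\rho = \lambda_{\min}(B^{-1/2}\E{Z}B^{-1/2})$ is strictly positive (so the bound is non-vacuous); the inequality itself holds in any case.

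I do not anticipate a genuine obstacle here — the only thing to be careful about is correctly tracking how the $B$-norm and the $B$-inner product transform under the symmetric substitution $y\mapsto B^{-1/2}w$, and noting that the spectra of $B^{-1}\E{Z}$ and $B^{-1/2}\E{Z}B^{-1/2}$ coincide (already used in~\eqref{eq:rho}), so that $\lambda_{\min}(B^{-1/2}\E{Z}B^{-1/2})$ is the right constant to extract.
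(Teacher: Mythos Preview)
Your proposal is correct and is essentially the same argument as the paper's: the paper phrases it as the semidefinite ordering $\E{Z}\succeq (1-\rho)B$ obtained from the variational characterization $1-\rho=\max\{t:\E{Z}-tB\succeq 0\}$, which is exactly your Rayleigh-quotient bound after the substitution $y=B^{-1/2}w$.
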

\begin{proof} As $\E{Z}$ and $B$ are positive definite, we get
\begin{align*}
1-\rho = \lambda_{\min}(B^{-1/2}\E{Z}B^{-1/2})
&= \max_{t} \left\{ t \quad | \quad B^{-1/2}\E{Z}B^{-1/2} -t \cdot I \succeq 0\right\}\\
&=\max_{t} \left\{ t \quad | \quad \E{Z}-t\cdot B \succeq 0\right\}.
\end{align*}
Therefore, $\E{Z}\succeq (1-\rho)B$, and the result follows.\end{proof}
  
\begin{theorem}[Expectation of norm]\label{theo:Enormconv} If $\E{Z}$ is positive definite, then
\begin{equation} \label{eq:Enormconv}
 \E {\|x^{k} -x^{*} \|_B^2 } \leq \rho^k \norm{x^{0} - x^{*}}_B^2,
 \end{equation}
 where $\rho<1$ is given in \eqref{eq:rho}.
\end{theorem}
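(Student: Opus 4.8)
The plan is to work directly with the random fixed point recurrence~\eqref{eq:xZupdate}, which tells us that $x^{k+1}-x^* = (I-B^{-1}Z)(x^k-x^*)$ pointwise (not just in expectation). The idea is to take the $B$-norm squared of both sides, use the fact established in Lemma~\ref{lem:BZ} that $I-B^{-1}Z$ is an orthogonal projection with respect to the $B$-inner product, and then take expectations conditioned on $x^k$.

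First I would compute, using that $P \eqdef I - B^{-1}Z$ satisfies $P^2 = P$ and is self-adjoint in the $B$-inner product,
\begin{equation}\label{eq:proofEnorm1}
\|x^{k+1}-x^*\|_B^2 = \langle P(x^k-x^*), P(x^k-x^*)\rangle_B = \langle P(x^k-x^*), x^k-x^*\rangle_B,
\end{equation}
where the last equality uses $P^T_B P = P$ (with $P^T_B$ denoting the $B$-adjoint). Expanding $P = I - B^{-1}Z$ and recalling $\langle B^{-1}Z w, w\rangle_B = w^T B B^{-1} Z w = w^T Z w = \langle Zw, w\rangle$, relation~\eqref{eq:proofEnorm1} becomes
\begin{equation}\label{eq:proofEnorm2}
\|x^{k+1}-x^*\|_B^2 = \|x^k-x^*\|_B^2 - \langle Z(x^k-x^*), x^k-x^*\rangle.
\end{equation}

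Next I would take expectation conditioned on $x^k$. Since $Z$ depends only on the fresh sample $S$ and is independent of $x^k$, this gives
\begin{equation}\label{eq:proofEnorm3}
\E{\|x^{k+1}-x^*\|_B^2 \mid x^k} = \|x^k-x^*\|_B^2 - \langle \E{Z}(x^k-x^*), x^k-x^*\rangle.
\end{equation}
Now I apply Lemma~\ref{lem:y} with $y = x^k - x^*$ (the hypothesis that $\E{Z}$ is positive definite is exactly the standing assumption of the theorem, and also guarantees $\rho < 1$ via Lemma~\ref{lem:rho}), which yields $\langle \E{Z}(x^k-x^*), x^k-x^*\rangle \geq (1-\rho)\|x^k-x^*\|_B^2$. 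Substituting into~\eqref{eq:proofEnorm3} gives $\E{\|x^{k+1}-x^*\|_B^2 \mid x^k} \leq \rho \|x^k-x^*\|_B^2$. Taking full expectations (tower property) produces $\E{\|x^{k+1}-x^*\|_B^2} \leq \rho\, \E{\|x^k-x^*\|_B^2}$, and unrolling this recurrence from $k=0$ down gives the claimed bound~\eqref{eq:Enormconv}.

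I do not anticipate a genuinely hard step here; the only point requiring care is the identity $\langle Pv, Pv\rangle_B = \langle Pv, v\rangle_B$, which relies on $P$ being an \emph{orthogonal} (not merely idempotent) projection in the $B$-geometry — this is precisely what Lemma~\ref{lem:BZ}(ii) supplies, so it should be invoked explicitly. A secondary point worth stating cleanly is the independence of $Z$ and $x^k$, which justifies pulling $\E{Z}$ through the conditional expectation in~\eqref{eq:proofEnorm3}; this follows from the i.i.d.\ sampling of $S$ at each iteration.
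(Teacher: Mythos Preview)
Your proposal is correct and follows essentially the same approach as the paper: both start from the fixed-point identity~\eqref{eq:xZupdate}, use the projection property of $I-B^{-1}Z$ (the paper cites~\eqref{eq:projectionxx}, you cite Lemma~\ref{lem:BZ}(ii), which amounts to the same thing) to reduce $\|x^{k+1}-x^*\|_B^2$ to $\|x^k-x^*\|_B^2 - \langle Z(x^k-x^*), x^k-x^*\rangle$, then take conditional expectation, apply Lemma~\ref{lem:y}, and unroll. Your write-up is slightly more explicit about the $B$-self-adjointness of the projection and about the independence of $Z$ from $x^k$, which is a plus.
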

\begin{proof}
Let $r^k = x^k-x^*$. Taking the expectation of~\eqref{eq:xZupdate} conditioned on $r^k$ we get
\begin{eqnarray*}
\E{\|r^{k+1}\|_B^2 \, | \, r^k} &\overset{\eqref{eq:xZupdate}}{=} &\E{\|(I-B^{-1}Z)r^k\|_B^2 \, | \, r^k } \nonumber \\
&\overset{\eqref{eq:projectionxx}}{=} & \E{\left< (B-Z)r^k,r^k\right> \, | \, r^k} \nonumber\\
& = & \|r^k\|_B^2 - \left< \E{Z} r^k,r^k\right> \quad  \overset{(\text{Lemma}~\eqref{lem:y})}{\leq} \quad \rho \cdot \|r^{k}\|^2_B.
\end{eqnarray*}
Taking expectation again and unrolling the recurrence gives the result. 
\end{proof}

The convergence rate $\rho$ of the expected norm of the error is ``worse'' than the $\rho^2$ rate of convergence of the norm of the expected error in Theorem~\ref{theo:normEconv}.
 This should not be misconstrued as Theorem~\ref{theo:normEconv} offering a ``better'' convergence rate than Theorem~\ref{theo:Enormconv}, because, as explained in Lemma~\ref{lem:conv}, convergence of the expected norm of the error is a stronger type of convergence. More importantly, the exponent is not of any crucial importance; clearly, an exponent of $2$ manifests itself only in halving the number of iterations.

\section{Methods Based on Discrete Sampling}\label{sec:discrete}

When $S$ has a discrete distribution, we can establish under reasonable assumptions when $\E{Z}$ is positive definite (Proposition~\ref{pro:Ediscrete}),  we can optimize the convergence rate in terms of the chosen probability distribution, and finally, determine a  probability distribution for which the convergence rate is expressed in terms of the scaled condition number (Theorem~\ref{theo:convsingleS}).

\begin{assumption}[Complete Discrete Sampling]
The random matrix $S$ has a discrete distribution. In particular,  $S= S_i \in \R^{m \times q_i}$ with probability  $p_i>0$, where $S_i^TA$ has full row rank and $q_i \in \N,$ for $i=1,\ldots, r$. Furthermore $\mathbf{S} \eqdef [S_1, \ldots, S_r] \in \R^{\rob{m}\times \sum_{i=1}^r q_i}$
is such that $A^T\mathbf{S}$ has full row rank.
\end{assumption}

\rob{As an example of complete discrete sampling, 
if $A$ has full column rank  and each row of $A$ is not strictly zero, $S =e^i$ with probability $p_i =1/n$, for $i =1,\ldots, n,$ then $\mathbf{S} =I$ then $S$ is a complete discrete sampling.
In fact, from any basis of $\R^n$ we can construct a complete discrete sampling in an analogous way. 

When $S$ is a complete discrete sampling, then $S^TA$ has full row rank and 
$(S^T A B^{-1}A^T S)^{\dagger}=(S^T A B^{-1}A^T S)^{-1}.$ Therefore we replace the pseudo-inverse in~\eqref{eq:MP} and~\eqref{eq:xZupdate} by the inverse.
} \rob{Furthermore, }using a complete discrete sampling guarantees convergence of the resulting method.

\begin{proposition}\label{pro:Ediscrete} 
Let $S$ be a complete discrete sampling,  then $\E{Z}$ is positive definite.
\end{proposition}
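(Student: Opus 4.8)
The plan is to exploit the complete discrete sampling structure in order to replace the pseudoinverse in the definition of $Z$ by a genuine inverse, and then reduce positive definiteness of $\E{Z}$ to the full-rank assumption on $A^T\mathbf{S}$.

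First I would record that, since $S=S_i$ with probability $p_i>0$ and $S_i^TA$ has full row rank, the matrix $S_i^TAB^{-1}A^TS_i$ is \emph{invertible}: writing $M_i \eqdef S_i^TAB^{-1/2}$ we have $S_i^TAB^{-1}A^TS_i = M_iM_i^T$, and $M_i$ has full row rank because $B^{-1/2}$ is invertible, so $M_iM_i^T$ is positive definite. Hence $Z_i \eqdef A^TS_i(S_i^TAB^{-1}A^TS_i)^{-1}S_i^TA$ is well defined, and $\E{Z} = \sum_{i=1}^r p_i Z_i$.

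Next, for an arbitrary $x\in\R^n$ I would compute the quadratic form
\[
x^T\E{Z}x = \sum_{i=1}^r p_i\,(S_i^TAx)^T(S_i^TAB^{-1}A^TS_i)^{-1}(S_i^TAx).
\]
Since each middle matrix is positive definite and each $p_i>0$, every summand is nonnegative, so $\E{Z}\succeq 0$; moreover $x^T\E{Z}x = 0$ forces $S_i^TAx = 0$ for every $i=1,\ldots,r$, i.e. $\mathbf{S}^TAx = 0$. Finally, the assumption that $A^T\mathbf{S}$ has full row rank means that $\mathbf{S}^TA$ has full column rank, hence trivial null space, so $x^T\E{Z}x=0$ implies $x=0$. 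This yields $\E{Z}\succ 0$.

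There is no genuine obstacle here; the one point that needs a moment's care is the first step — verifying that each $S_i^TAB^{-1}A^TS_i$ is actually invertible rather than merely positive semidefinite — which is exactly what the clause ``$S_i^TA$ has full row rank'' in the definition of complete discrete sampling provides, and which is also what justifies passing from the pseudoinverse in~\eqref{eq:Z-first} to the inverse throughout this section.
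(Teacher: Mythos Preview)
Your argument is correct and complete. The paper takes a slightly different, more constructive route: it defines the block-diagonal matrix
\[
D \eqdef \mbox{diag}\left(\sqrt{p_1}(S_1^TAB^{-1}A^TS_1)^{-1/2},\ldots,\sqrt{p_r}(S_r^TAB^{-1}A^TS_r)^{-1/2}\right)
\]
and then observes that the sum $\sum_i p_i Z_i$ factors exactly as $\E{Z} = (A^T\mathbf{S}D)(D\mathbf{S}^TA)$; positive definiteness follows because $A^T\mathbf{S}$ has full row rank and $D$ is invertible. Your quadratic-form argument is arguably more elementary and avoids introducing $D$, but the paper's explicit factorization is not gratuitous: the identity $\E{Z} = A^T\mathbf{S}D^2\mathbf{S}^TA$ is reused in the proof of Theorem~\ref{theo:convsingleS} to bound $\lambda_{\min}(B^{-1/2}\E{Z}B^{-1/2})$ in terms of $\lambda_{\min}(D^2)$ and $\lambda_{\min}(\mathbf{S}^TAB^{-1}A^T\mathbf{S})$. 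So both proofs hinge on the same two hypotheses, but the factorization buys a formula that does further work downstream.
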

\begin{proof}
Let 
\begin{equation}\label{eq:defD}
D \eqdef	 \mbox{diag}\left( \sqrt{p_1}((S_1)^T A B^{-1}A^T S_1)^{-1/2}, \ldots, 
\sqrt{p_r}((S_r)^T A B^{-1}A^T S_r)^{-1/2}\right)
\end{equation}
which is a block diagonal matrix, and is well defined and invertible as $S_i^T A$  has full row rank for $i=1,\ldots, r$.
 Taking the expectation of $Z$~\eqref{eq:Z-first} gives
\begin{align}
\E{Z} &= \sum_{i=1}^r A^T S_i (S_i^T A B^{-1}A^T S_i)^{-1}S_i^T A p_i \nonumber \\
&=  A^T\left(\sum_{i=1}^r   S_i \sqrt{p_i}(S_i^T A B^{-1}A^T S_i)^{-1/2} (S_i^T A B^{-1}A^T S_i)^{-1/2}  \sqrt{p_i}S_i^T \right) A \nonumber \\
&= \left(  A^T \mathbf{S} D\right)  \left( D \mathbf{S}^T  A\right), \label{eq:EZdiscrete}
\end{align} which is positive definite because $A^T\mathbf{S}$ has full row rank and $D$ is invertible. 
\end{proof}\\
 With $\E{Z}$ positive definite, we can apply the convergence Theorem~\ref{theo:normEconv} and~\ref{theo:Enormconv}, and the resulting method converges.
 
 
 \subsection{Optimal Probabilities}
 
We can choose the discrete probability distribution that
 optimizes the convergence rate. For this, according to Theorems~\ref{theo:Enormconv} and~\ref{theo:normEconv} we need to find $p=(p_1,\dots,p_r)$ that maximizes the minimal eigenvalue of $B^{-1/2}\E{Z}B^{-1/2}$.
Let $S$ be a complete discrete sampling and fix the sample matrices $S_1,\dots, S_r$. Let us denote $Z=Z(p)$ as a function of $p=(p_1,\dots,p_r)$.  Then we can also think of the spectral radius as a function of $p$ where \[\rho(p) = 1 - \lambda_{\min}(B^{-1/2}\E{Z(p)}B^{-1/2}).\]

 Letting \[\Delta_r \eqdef \left\{p = (p_1,\dots,p_r) \in \R^r \;:\; \sum_{i=1}^r p_i =1, \; p\geq 0\right\},\] the problem of minimizing the spectral radius (i.e., optimizing the convergence rate) can be written as
 \[\rho^* \quad \eqdef\quad \min_{p\in \Delta_r} \rho(p) \quad = \quad 1 - \max_{p\in \Delta_r} \lambda_{\min} (B^{-1/2}\E{Z(p)}B^{-1/2}).\]
 This can be cast as a convex optimization problem, by first re-writing 
\begin{align*}
B^{-1/2}\E{Z(p)}B^{-1/2} &= \sum_{i=1}^r p_i \left(B^{-1/2}A^T S_i (S_i^T A B^{-1}A^T S_i)^{-1}S_i^T AB^{-1/2}  \right)\\
	 &= \sum_{i=1}^r p_i \left(V_i (V_i^T V_i)^{-1}V_i^T  \right),
\end{align*} 
 where $V_i = B^{-1/2}A^T S_i.$ Thus
\begin{equation}\label{eq:opt_sampling}\rho^* \quad = \quad 1 - \max_{p\in \Delta_r} \lambda_{\min}  \left( \sum_{i=1}^r p_i V_i (V_i^TV_i)^{-1} V_i^T  \right).\end{equation} 
 To obtain $p$ that maximizes the smallest eigenvalue, we solve 
 \begin{align}
 \max_{p,t} \,\, &\quad t  \nonumber \\
 \mbox{subject to}& \quad \sum_{i=1}^r p_i \left(V_i (V_i^T V_i)^{-1}V_i^T  \right) \succeq t\cdot  I, \label{eq:optconv}\\
 & \quad p \in \Delta_r. \nonumber
 \end{align}
 Despite~\eqref{eq:optconv} being a convex semi-definite program{\footnote{\rob{When preparing a revision of this paper, we have learned about the existence of prior work~\cite{Dai2014} where the authors have also characterized the probability distribution that optimizes the convergences rate of the RK method as the solution to an SDP.}}, which is apparently a harder problem than solving the original linear system, investing the time into solving~\eqref{eq:optconv} \rob{using a solver for convex conic programming such as \texttt{cvx}~\cite{cvx} can pay off,} as we show in Section~\ref{sec:numopt}. Though for a practical method based on this, we would need to develop an approximate solution to~\eqref{eq:optconv} which can be efficiently calculated. 
  
 \subsection{Convenient Probabilities}

Next we develop a choice of probability distribution that yields a convergence rate that is easy to interpret. This result is new and covers a wide range of methods,  including randomized Kaczmarz, randomized coordinate descent, as well as their block variants. However, it is more general, and covers many other possible particular algorithms, which arise by choosing a particular set of sample matrices $S_i$, for $i=1,\ldots, r.$

\begin{theorem} \label{theo:convsingleS}  Let $S$ be a complete discrete sampling such that $S = S_i \in \R^{m}$ with probability 
\begin{equation}\label{eq:convprob}p_i~=~\dfrac{\Tr{S_i^T AB^{-1}A^TS_i}}{\norm{B^{-1/2}A^T\mathbf{S}}_F^2},\quad \mbox{for } \quad i=1,\ldots, { r}.
\end{equation}
Then the iterates~\eqref{eq:MP} satisfy 
\begin{equation} \label{eq:expnormcon}
\E{\norm{x^{k} -x^{*} }_B^2} \leq \rho_c^k \,\norm{x^{0} - x^{*}}_B^2,\end{equation}
where
\begin{equation}\label{eq:rhoconv} \rho_c = 1- \frac{\lambda_{\min}\left(\mathbf{S}^T A B^{-1}A^T \mathbf{S} \right)}{\norm{B^{-1/2}A^T\mathbf{S}}_F^2}.
\end{equation}
\end{theorem}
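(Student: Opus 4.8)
The plan is to reduce Theorem~\ref{theo:convsingleS} to the already-proved Theorem~\ref{theo:Enormconv} by showing that the specific probabilities~\eqref{eq:convprob} make the generic rate $\rho$ from~\eqref{eq:rho} equal to the explicit quantity $\rho_c$ in~\eqref{eq:rhoconv}. Since $S$ is a complete discrete sampling, Proposition~\ref{pro:Ediscrete} guarantees $\E{Z}$ is positive definite, so Theorem~\ref{theo:Enormconv} applies and yields $\E{\norm{x^k-x^*}_B^2}\le \rho^k\norm{x^0-x^*}_B^2$ with $\rho=1-\lambda_{\min}(B^{-1/2}\E{Z}B^{-1/2})$. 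It therefore suffices to prove $\lambda_{\min}(B^{-1/2}\E{Z}B^{-1/2}) = \lambda_{\min}(\mathbf{S}^T A B^{-1}A^T\mathbf{S})/\norm{B^{-1/2}A^T\mathbf{S}}_F^2$ under the stated choice of $p_i$.

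First I would compute $\E{Z}$ explicitly. Because each $S_i\in\R^m$ is a vector, $S_i^T A B^{-1}A^T S_i$ is a positive scalar (it is positive by the full-row-rank assumption), so $Z(p) = \sum_i p_i \frac{A^T S_i S_i^T A}{S_i^T A B^{-1}A^T S_i}$. Plugging in $p_i = \frac{S_i^T A B^{-1}A^T S_i}{\norm{B^{-1/2}A^T\mathbf{S}}_F^2}$ (note $\Tr{S_i^T A B^{-1}A^T S_i} = S_i^T A B^{-1}A^T S_i$ since it is a scalar), the scalar denominators cancel exactly, leaving
\[
\E{Z} = \frac{1}{\norm{B^{-1/2}A^T\mathbf{S}}_F^2}\sum_{i=1}^r A^T S_i S_i^T A = \frac{A^T\mathbf{S}\mathbf{S}^T A}{\norm{B^{-1/2}A^T\mathbf{S}}_F^2},
\]
using the block-column structure $\mathbf{S}=[S_1,\dots,S_r]$ so that $\sum_i S_i S_i^T = \mathbf{S}\mathbf{S}^T$. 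Conjugating by $B^{-1/2}$ gives $B^{-1/2}\E{Z}B^{-1/2} = W W^T / \norm{W}_F^2$ where $W \eqdef B^{-1/2}A^T\mathbf{S}$.

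Next I would identify $\lambda_{\min}(WW^T/\norm{W}_F^2)$. Since $\mathbf{S}^T A B^{-1}A^T\mathbf{S} = W^T W$ and $WW^T$, $W^TW$ share the same nonzero eigenvalues, and since $A^T\mathbf{S}$ has full row rank (hence $WW^T$ is positive definite, being $n\times n$ of rank $n$), we get $\lambda_{\min}(WW^T) = \lambda_{\min}(W^TW) = \lambda_{\min}(\mathbf{S}^T A B^{-1}A^T\mathbf{S})$; here one must be a little careful that $W^TW$ is a larger matrix whose smallest eigenvalue might be $0$, but it is exactly the smallest \emph{nonzero} eigenvalue that equals $\lambda_{\min}(WW^T)$, and since $WW^T\succ 0$ the claim $\lambda_{\min}(\mathbf{S}^TAB^{-1}A^T\mathbf{S})>0$ holds as its smallest nonzero eigenvalue — I would state this as the smallest eigenvalue of $\mathbf{S}^TAB^{-1}A^T\mathbf{S}$ precisely because the theorem's $\rho_c$ uses that notation and the full-row-rank hypothesis on $A^T\mathbf{S}$ forces it. Dividing by the scalar $\norm{W}_F^2 = \norm{B^{-1/2}A^T\mathbf{S}}_F^2$ then gives $\lambda_{\min}(B^{-1/2}\E{Z}B^{-1/2}) = \lambda_{\min}(\mathbf{S}^TAB^{-1}A^T\mathbf{S})/\norm{B^{-1/2}A^T\mathbf{S}}_F^2$, so $\rho = \rho_c$, and the conclusion follows by invoking Theorem~\ref{theo:Enormconv}. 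Finally, one should check that $p$ actually lies in $\Delta_r$: positivity is clear, and $\sum_i p_i = 1$ follows because $\sum_i S_i^T A B^{-1}A^T S_i = \Tr{\mathbf{S}^T A B^{-1}A^T\mathbf{S}} = \norm{B^{-1/2}A^T\mathbf{S}}_F^2$.

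The main obstacle is mostly bookkeeping rather than a deep difficulty: one must be careful that $S_i$ being a single column makes $S_i^T A B^{-1}A^T S_i$ a scalar (so that its trace equals itself and it cancels cleanly with $p_i$), and one must correctly relate the eigenvalues of $WW^T$ and $W^TW$ — the only subtlety being that $W^TW$ (size $\sum q_i = r$, since each $q_i=1$) may be rank-deficient while $WW^T$ is not, so "smallest eigenvalue" must be read as "smallest nonzero eigenvalue" when passing between them, which is exactly what the full-row-rank assumption on $A^T\mathbf{S}$ underwrites.
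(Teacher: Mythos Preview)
Your argument is correct and in fact proves the sharper statement $\rho=\rho_c$ (with $\lambda_{\min}$ read as the smallest \emph{nonzero} eigenvalue of $\mathbf{S}^TAB^{-1}A^T\mathbf{S}$, a subtlety you rightly flag), whereas the paper establishes only $\rho\le\rho_c$. The routes differ: the paper works in the general block setting via the factorization $\E{Z}=A^T\mathbf{S}D^2\mathbf{S}^TA$ of Proposition~\ref{pro:Ediscrete}, bounds $\lambda_{\min}(D^2)\ge 1/\norm{B^{-1/2}A^T\mathbf{S}}_F^2$, and then uses the product inequality $\lambda_{\min}(PD^2)\ge\lambda_{\min}(P)\lambda_{\min}(D^2)$ to reach~\eqref{eq:rhoconv}; it remarks afterward that for column vectors $S_i$ the bound on $\lambda_{\min}(D^2)$ is tight and $D^2$ is a scalar multiple of the identity, so both inequalities collapse to equalities. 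You instead exploit the vector hypothesis from the outset: the scalar $S_i^TAB^{-1}A^TS_i$ cancels against $p_i$ term by term, yielding directly $B^{-1/2}\E{Z}B^{-1/2}=WW^T/\norm{W}_F^2$ with $W=B^{-1/2}A^T\mathbf{S}$, and the rate drops out without any eigenvalue-product inequality. Your approach is more elementary and yields equality immediately; the paper's approach is organized so that the same computation covers block samplings $S_i\in\R^{m\times q_i}$, at the cost of the extra inequality step. Both proofs share the same looseness in writing $\lambda_{\min}(\mathbf{S}^TAB^{-1}A^T\mathbf{S})$ for what is really the smallest nonzero eigenvalue (equivalently $\lambda_{\min}(WW^T)$), and you are right that the full-row-rank assumption on $A^T\mathbf{S}$ is what makes this quantity positive.
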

\begin{proof}
Let $t_i = \Tr{\rob{S_i}^T A B^{-1}A^T \rob{S_i}}$, and with~\eqref{eq:convprob} in~\eqref{eq:defD} we have
\[ D^2 =\frac{1}{\norm{B^{-1/2}A^T\mathbf{S}}_F^2}\mbox{diag}\left(t_1 (\rob{S_1}^T A B^{-1}A^T \rob{S_1})^{-1}, \ldots,
t_r (\rob{S_r}^T A B^{-1}A^T \rob{S_r})^{-1}\right),
 \]
thus
\begin{equation}\label{eq:Dlambda} \lambda_{\min}(D^2) = \frac{1}{\norm{B^{-1/2}A^T\mathbf{S}}_F^2}\min_i\left\{ \frac{t_i}{\lambda_{\max}(\rob{S_i}^T A B^{-1}A^T \rob{S_i})} \right\} \geq \frac{1}{\norm{B^{-1/2}A^T\mathbf{S}}_F^2}. 
\end{equation}
Applying the above in~\eqref{eq:EZdiscrete} gives
\begin{align}
\lambda_{\min}\left(B^{-1/2}\E{Z}B^{-1/2} \right)
&= \lambda_{\min}\left(B^{-1/2}  A^T \mathbf{S} D^2 \mathbf{S}^T  A B^{-1/2}\right) \nonumber\\
&= \lambda_{\min}\left(  \mathbf{S}^T  A B^{-1} A^T \mathbf{S} D^2\right) \nonumber\\
& \geq  \lambda_{\min}\left(\mathbf{S}^T  A B^{-1} A^T \mathbf{S}  \right)\lambda_{\min}(D^2) \label{eq:prodpdeig}\\
& \geq  \frac{\lambda_{\min}\left(\mathbf{S}^T  A B^{-1} A^T \mathbf{S}  \right)}{\norm{B^{-1/2}A^T\mathbf{S}}_F^2}, \nonumber
\end{align} 
where we used that if $B,C \in \R^{n\times n}$ are positive definite $\lambda_{\min}(BC) \geq \lambda_{\min}(B)\lambda_{\min}(C).$ Finally
\begin{equation}\label{eq:convrhobound}
1 - \lambda_{\min}\left(B^{-1/2}\E{Z}B^{-1/2} \right) \leq 1 - \frac{\lambda_{\min}\left(\mathbf{S}^T  A B^{-1} A^T \mathbf{S}  \right)}{\norm{B^{-1/2}A^T\mathbf{S}}_F^2}.\end{equation}
The result~\eqref{eq:expnormcon} follows by applying Theorem~\ref{theo:Enormconv}.
\end{proof}

The convergence rate $\lambda_{\min}\left(\mathbf{S}^T  A B^{-1} A^T \mathbf{S}\right)/ \norm{B^{-1/2}A^T\mathbf{S}}_F^2$ is known as the scaled condition number, and naturally appears in other numerical schemes, such as matrix inversion~\cite{Edelman1992,Demmel1988}. When $\rob{S_i =s_i} \in \R^n$ is a column vector then \[p_i~=~\left((\rob{s_i})^T AB^{-1}A^T \rob{s_i}\right)/\norm{B^{-1/2}A^T\mathbf{S}}_F^2,\] for $i=1,\ldots r.$ In this case, the bound~\eqref{eq:Dlambda} is an equality and $D^2$ is a scaled identity, so~\eqref{eq:prodpdeig} and consequently~\eqref{eq:convrhobound} are equalities. For block methods, it is different story, and there is much more slack in the inequality~\eqref{eq:convrhobound}. So much so, 
the convergence rate~\eqref{eq:rhoconv} does not indicate any advantage of using a block method (contrary to numerical experiments).
  To see the advantage of a block method, we need to use the exact expression for $\lambda_{\min}(D^2)$ given in~\eqref{eq:Dlambda}. Though this results in a somewhat harder to interpret convergence rate, a matrix paving could be used explore this block convergence rate,  as was done for the block Kaczmarz method~\cite{Needell2014,Needell2012a}.

By appropriately choosing $B$ and $S$, this theorem applied to RK method~\eqref{eq:RKintro}, the CD-LS method~\eqref{eq:CDLSintro} and the CD-pd method~\eqref{eq:CDpdintro},  yields the convergence results~\eqref{eq:RKconv},~\eqref{eq:CDLSconv} and~\eqref{eq:CDposconv}, respectively, for single column sampling or block methods alike.

This theorem also suggests a preconditioning strategy, in that, a faster convergence rate will be attained if $\mathbf{S}$ is an approximate inverse of $B^{-1/2}A^T.$ For instance, in the RK method where $B=I$, this suggests that an accelerated convergence can be attained if $S$ is a random sampling of the rows of a preconditioner (approximate inverse) of $A.$


 \section{Methods Based on Gaussian Sampling} \label{sec:gauss}

In this section we shall describe variants of our method in the case when $S$ is a Gaussian  vector with mean $0\in \R^m$ and  a positive definite covariance matrix $\Sigma\in \R^{m \times m}$.  That is, $S =\zeta \sim N(0,\Sigma)$. This applied to~\eqref{eq:MP} results in iterations of the form
\begin{equation}\label{eq:gaussupdate}
\boxed{x^{k+1} = x^{k}  - \frac{\zeta ^T(A x^{k}-b)}{\zeta^TAB^{-1}A^T\zeta } B^{-1}A^T\zeta}
\end{equation}
Due to the symmetry of the multivariate normal distribution, there is a zero probability that $\zeta \in \Null{A^T}$ for any nonzero matrix $A$. 


Unlike the discrete methods in Section~\ref{sec:examples},  to calculate an iteration of~\eqref{eq:gaussupdate} we need to compute the product of a matrix with a dense vector $\zeta$.
This significantly raises the cost of an iteration. Though in our numeric tests in Section~\ref{sec:numerics}, the faster convergence of the Gaussian method often pays off for their high iteration cost. 

To analyze the complexity of the resulting method let $\xi \eqdef B^{-1/2}A^T S,$
which is also Gaussian, distributed as $\xi\sim N(0, \Omega)$,
where $\Omega\eqdef B^{-1/2}A^T \Sigma A B^{-1/2}.$
In this section we  assume {{\em $A$ has full column rank}, so that $\Omega$ is always positive definite. \rob{The complexity of the method can be established through
\begin{eqnarray} 
\rho &=& 1 - \lambda_{\min}\left(\E{ B^{-1/2} ZB^{-1/2}}\right) 
 =  1- \lambda_{\min}\left(\E{\frac{\xi\xi^T}{\norm{\xi}^2_2}}\right).
 \end{eqnarray}
 We can simplify the above by using the lower bound
 \[ \E{\frac{\xi\xi^T}{\norm{\xi}^2_2}} \succeq \frac{2}{\pi}\frac{\Omega}{\Tr{\Omega}},\] 
which is proven in Lemma~\ref{lem:gaussdiag} in the Appendix. Thus  
 \begin{equation}\label{eq:rhoboundgauss} 1- \frac{1}{n} \leq \rho \leq 1 -\frac{2}{\pi}\frac{\lambda_{\min}(\Omega)}{\Tr{\Omega}},
 \end{equation}
 where we used the general lower bound in~\eqref{eq:rho_lower}.
 Lemma~\ref{lem:gaussdiag} also shows that $\E{\xi\xi^T/\norm{\xi}^2_2}$ is positive definite,  thus Theorem~\ref{theo:Enormconv} guarantees that the expected norm of the error of all Gaussian methods converges exponentially to zero. This bound is tight upto a constant factor. For illustration of this,  in the setting  with $A=I=\Sigma$ we have $\xi \sim N(0,I)$ and $\E{\xi\xi^T/\norm{\xi}^2_2} = \tfrac{1}{n} I,$  which yields \[1-\dfrac{1}{n}\leq \rho  \leq 1- \dfrac{2}{\pi }\cdot \dfrac{1}{n}.\]}

 When $n=2$, then in  
Lemma~\ref{lem:2Dgausscov} of the Appendix we prove that
\[\E{\frac{\xi\xi^T}{\norm{\xi}^2_2}} = \frac{\Omega^{1/2}}{\Tr{\Omega^{1/2}}},\]
which yields a very favourable convergence rate. 


\subsection{Gaussian Kaczmarz}\label{sec:gaussI}
Let $B=I$ and choose $\Sigma =I$ so that $S=\eta\sim N(0,I)$. Then~\eqref{eq:gaussupdate}  has the form
\begin{equation}\boxed{ x^{k+1} = x^{k} 
- \frac{\eta^T (Ax^{k}-b)}{\|A^T \eta\|_2^2} A^T \eta } \end{equation}
which we call the \emph{Gaussian Kaczmarz} (GK) method, for it is the analogous method to the Randomized Karcmarz method in the discrete setting.  Using the formulation~\eqref{eq:RF},  for instance, the GK method can be interpreted as
\[x^{k+1} =\arg \min_{x\in \R^n} \norm{x- x^{*}}^2 \quad \mbox{ subject to } \quad  x = x^{k} + A^T\eta y, \quad y\in \R. \]
Thus at each iteration, a random normal Gaussian vector $\eta$ is drawn and a search direction is formed by $A^T\eta.$
 Then, starting from the previous iterate $x^{k}$,  an exact line search is performed over this search direction so that the euclidean distance from the optimal is minimized.

\subsection{Gaussian Least-Squares} \label{sec:gaussATA}
Let $B=A^TA$ and choose $S\sim N(0,\Sigma)$ with $\Sigma=AA^T$. It will be convenient to write $S=A\eta$, where $\eta\sim N(0,I)$. Then method \eqref{eq:gaussupdate} then has the form
\begin{equation}\boxed{x^{k+1}  = x^{k} - \frac{\eta^T A^T(Ax^{k} -b)}{\|A\eta\|_2^2} \eta}
 \end{equation}
 which we call the \emph{Gauss-LS} method. This method has a natural interpretation through formulation~\eqref{eq:RF} as
\[x^{k+1} =\arg \min_{x\in \R^n} \frac{1}{2}\|Ax-b\|_2^2 \quad \mbox{ subject to } \quad  x = x^{k} + y\eta, \quad y \in \R. \]
 That is, starting from $x^{k}$, we take a step in a random (Gaussian) direction, then perform an exact line search over this direction that minimizes the least squares error.
Thus the Gauss-LS method is the same as applying the Random Pursuit method~\cite{Stich2014} with exact line search to the Least-squares function.

\subsection{Gaussian Positive Definite}\label{sec:gaussA}
When $A$ is positive definite, we achieve an accelerated Gaussian method. 
Let $B =A$ and choose $S= \eta \sim N(0,I)$.  Method~\eqref{eq:gaussupdate} then has the form
\begin{equation}\label{eq:gausspd}\boxed{x^{k+1} 
= x^{k} - \frac{ \eta^T(A x^{k}-b)}{\norm{\eta}_A^2} \eta}
 \end{equation}
 which we call the \emph{Gauss-pd} method.
 
Using  formulation~\eqref{eq:RF}, the  method can be interpreted as
\[x^{k+1} =\arg \min_{x\in \R^n} \left\{f(x) \eqdef \tfrac{1}{2} x^TAx -b^T x\right\} \quad \mbox{ subject to } \quad  x = x^{k} + y\eta, \quad y \in \R. \]
 That is, starting from $x^{k}$, we take a step in a random (Gaussian) direction, then perform an exact line search over this direction. Thus the Gauss-pd method is equivalent to applying the Random Pursuit method~\cite{Stich2014} with exact line search to $f(x).$
 
      \rob{All the Gaussian methods can be extended to  block versions. We illustrate this by designing a Block Gauss-pd method where $S \in \R^{n \times q}$ has   i.i.d.\ Gaussian normal entries and $B=A.$ This results in the  iterates
 \begin{equation} \label{eq:Bgausspd} x^{k+1} 
= x^{k} - S(S^TAS)^{-1}S^T(A x^{k}-b).
 \end{equation}
 }

\section{Numerical Experiments} \label{sec:numerics}
 We perform some preliminary numeric tests.
   Everything was coded and run in MATLAB R2014b. 
Let \rob{$\kappa_2 = \norm{A}\norm{A^{\dagger}}$} be the $2-$norm condition number, where $A^{\dagger}$ is a pseudo-inverse of $A$. In comparing different methods for solving overdetermined systems, we use the relative error measure $\norm{Ax^k-b}_2/\norm{b}_2,$ while for positive definite systems we use $\norm{x^k-x^*}_A/\norm{x^*}_A$ as a relative error measure. We run each method until the relative error is below $10^{-4}$ or until $300$ seconds in time is exceeded. We use $x_0=0 \in \R^n$ as an initial point.
In each figure we plot the relative error in percentage \rob{on the vertical axis}, starting with $100\%$. \rob{For the horizontal axis, we use either wall-clock time measured using the \texttt{tic-toc} MATLAB function or the total number of floating point operations (\emph{flops}).}

In implementing the discrete sampling methods we used the convenient probability distributions~\eqref{eq:convprob}.
  
\rob{All tests were performed on a Desktop with 64bit quad-core Intel(R) Core(TM) i5-2400S CPU @2.50GHz with 6MB cache size with a Scientific Linux release 6.4 (Carbon) operating system.}
  
 \rob{Consistently across our experiments, the Gaussian methods almost always require more flops to reach a solution with the same precision as their discrete sampling counterparts. This is due to the expensive matrix-vector product required by the Gaussian methods. While the results are more mixed when measured in terms of wall clock time. This is because MATLAB  performs automatic multi-threading when calculating matrix-vector products, which was the bottleneck cost in the Gaussian methods. As our machine has four cores, this explains some of the difference observed when measuring performance in terms of  number of flops and wall clock time.} 
  
\subsection{Overdetermined linear systems}

First we compare the methods Gauss-LS, CD-LS, Gauss-Kaczmarz and RK methods on synthetic linear systems generated with the matrix functions {\tt rand} and {\tt sprandn}, see Figure~\ref{fig:oversynth}. 
The high iteration cost of the Gaussian methods resulted in poor performance on the dense problem generated using \texttt{rand} in Figure~\ref{fig:rand}. 
In Figure~\ref{fig:sprandn} we compare the methods on a sparse linear system generated  using the MATLAB sparse random matrix function \texttt{sprandn}($m,n$,{\tt density,rc}), where {\tt density} is the percentage of nonzero entries and {\tt rc} is the reciprocal of the condition number. On this sparse problem the Gaussian methods are more efficient, and converge at a similar rate to the discrete sampling methods.

\begin{figure}
    \centering
    \begin{subfigure}[t]{0.7\textwidth}
\includegraphics[width =  1.1\textwidth, height =4.5cm, trim= 32 270 47 285, clip ]{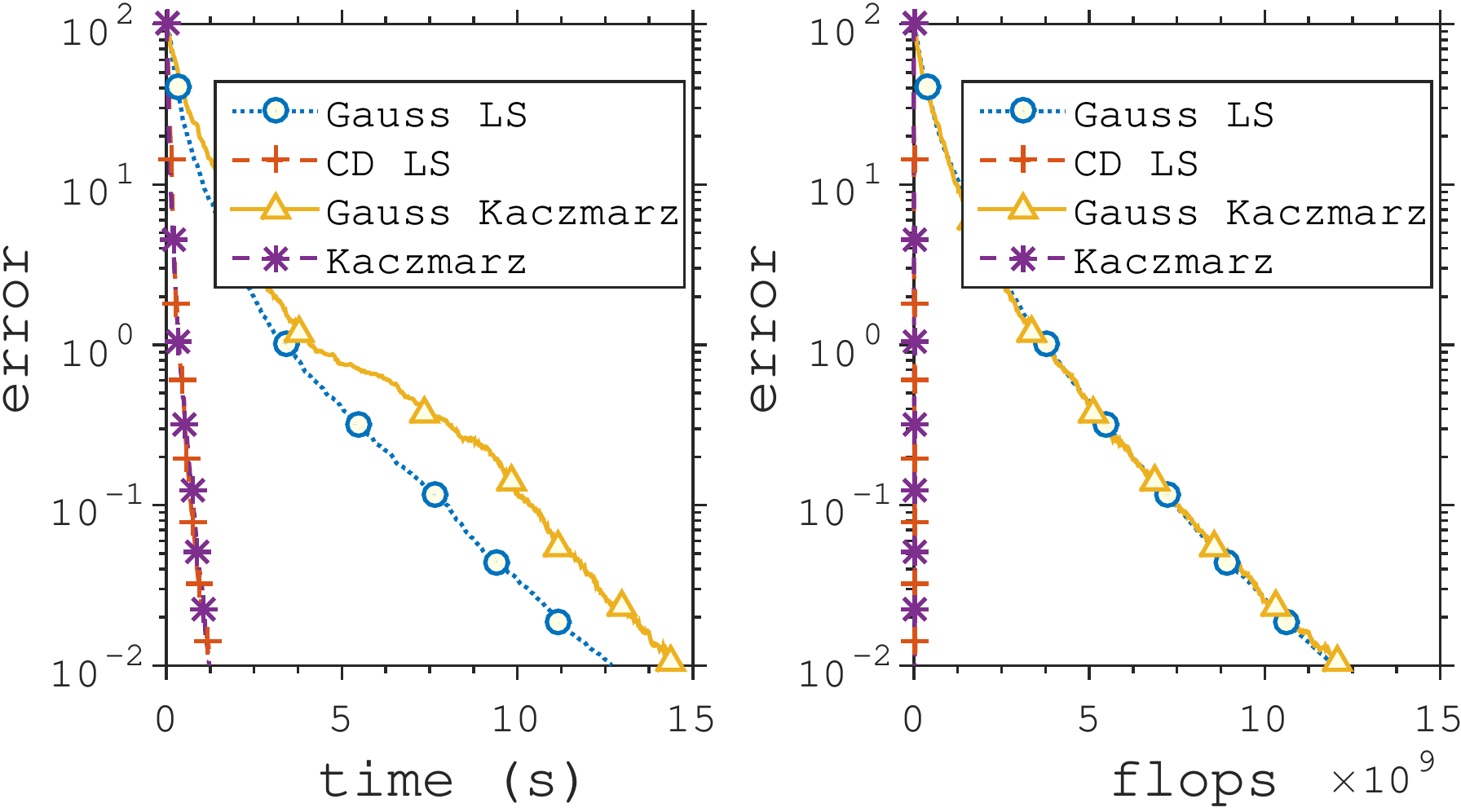}
        \caption{\texttt{rand}}\label{fig:rand}
    \end{subfigure}%
    \hspace{0.05\textwidth}
    \begin{subfigure}[t]{0.7\textwidth}
\includegraphics[width =  1.1\textwidth, height =4.5cm, trim= 32 270 47 285, clip ]{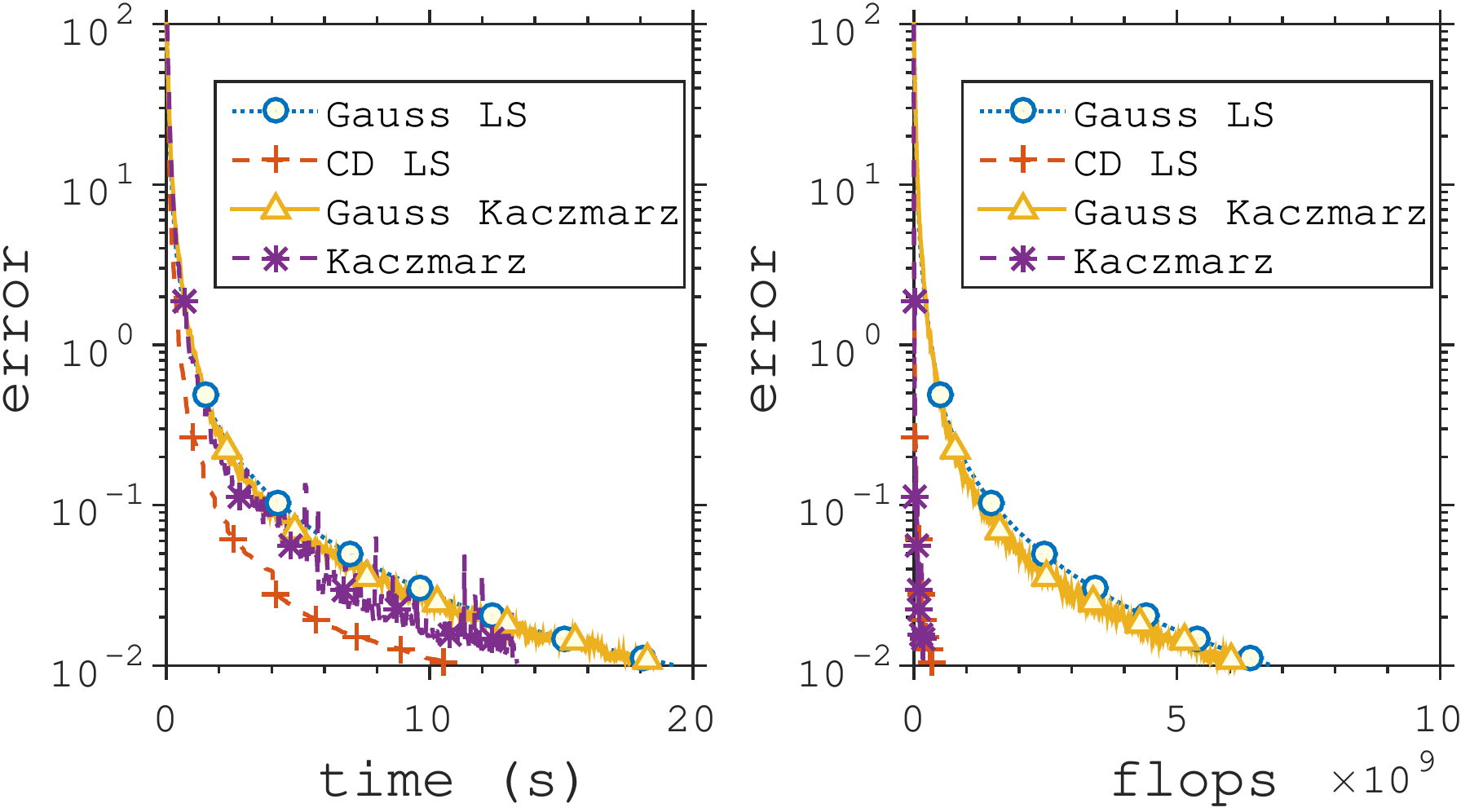}
        \caption{\texttt{sprandn}} \label{fig:sprandn}
    \end{subfigure}
    \caption{
The performance of the Gauss-LS, CD-LS, Gauss-Kaczmarz and RK methods on
    synthetic MATLAB generated problems (a) \texttt{rand}$(n,m)$ with $(m;n)=(1000,500)$ (b) \texttt{sprandn}($m,n$,{\tt density,rc}) with $(m;n) =(1000,500)$, {\tt density}$=1/\log(nm)$ and {\tt rc}$= 1/\sqrt{mn}$. In both experiments dense solutions were generated with $x^*=$\texttt{rand}$(n,1)$ and $b=Ax^*.$ }\label{fig:oversynth}
\end{figure}

In Figure~\ref{fig:overMM} we test two overdetermined linear systems taken from the the Matrix Market collection~\cite{Boisvert1997}. The collection also provides the right-hand side of the linear system.  Both of these systems are very well conditioned, but do not have full column rank, thus Theorem~\ref{theo:Enormconv} does not apply. The four methods have a similar performance on Figure~\ref{fig:illc1033}, while the  Gauss-LS and  CD-LS method converge faster on~\ref{fig:well1033} as compared to the Gauss-Kaczmarz and Kaczmarz methods.

\begin{figure}
    \centering
    \begin{subfigure}[t]{0.70\textwidth}
\includegraphics[width =  1.1\textwidth, height =4.5cm, trim= 32 270 47 285, clip ]{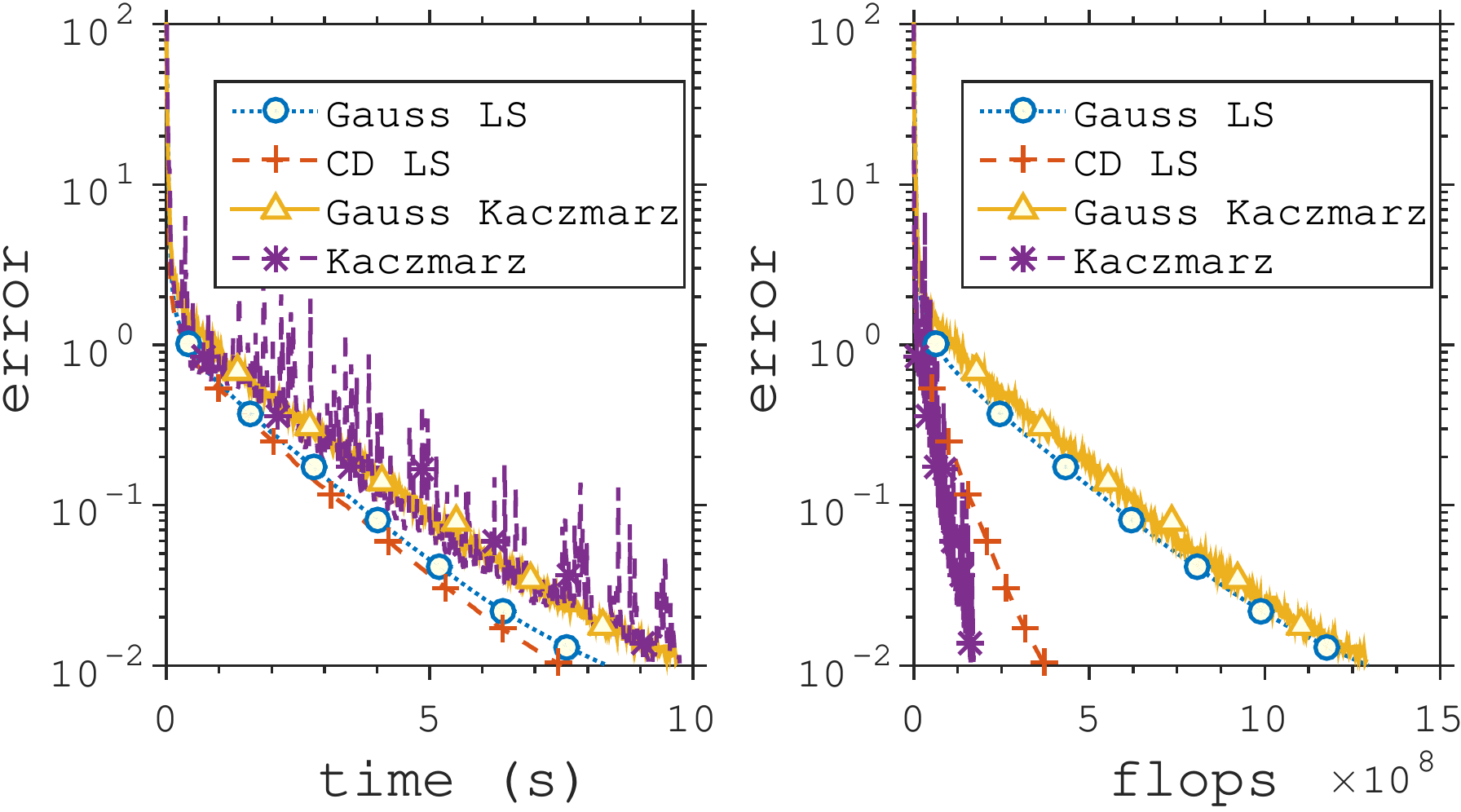}
        \caption{\texttt{illc1033}}\label{fig:illc1033}
    \end{subfigure}%
    \hspace{0.05\textwidth}
    \begin{subfigure}[t]{0.70\textwidth}
\includegraphics[width =  1.1\textwidth, height =4.5cm, trim= 32 270 47 285, clip ]{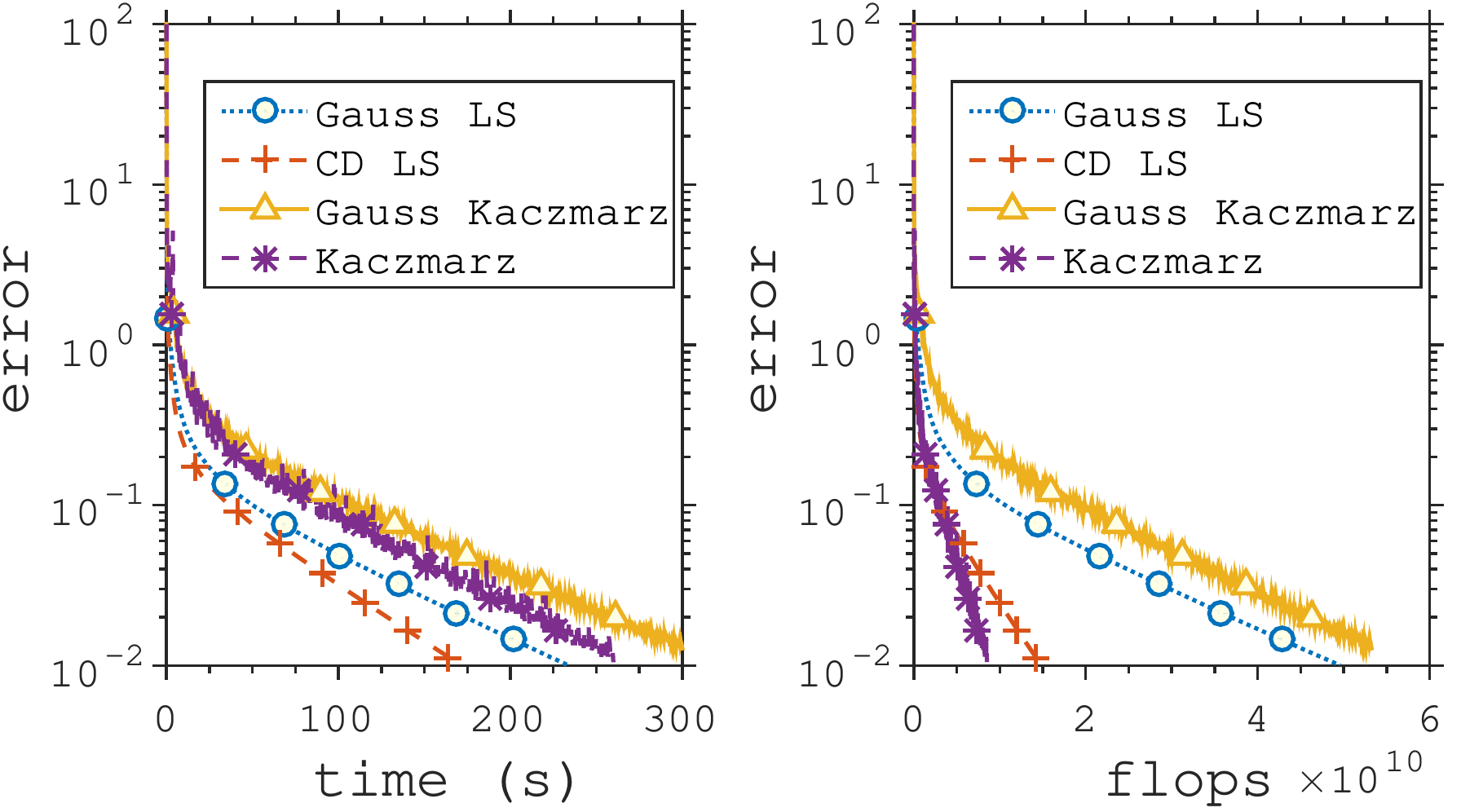}
        \caption{\texttt{well1033} } \label{fig:well1033}
    \end{subfigure}
    \caption{The performance of the Gauss-LS, CD-LS, Gauss-Kaczmarz and RK methods on
        linear systems (a) \texttt{well1033} where $(m;n) = (1850,750)$, $nnz = 8758$ and $\kappa_2 = 1.8$ (b) \texttt{illc1033} where $(m;n) =(1033;320)$, $nnz= 4732$ and $\kappa_2 = 2.1$, from the Matrix Market~\cite{Boisvert1997}.}\label{fig:overMM}
\end{figure}

Finally, we test  two problems, the \texttt{SUSY} problem  and the \texttt{covtype.binary} problem, from the library of support vector machine problems LIBSVM~\cite{Chang2011}. These problems do not form consistent linear systems, thus only the Gauss-LS and CD-LS methods are applicable, see Figure~\ref{fig:LIBSVM}.  This is equivalent to applying the Gauss-pd and CD-pd to the least squares system $A^TAx=A^Tb,$ which is always consistent.

\begin{figure}
    \centering
    \begin{subfigure}[t]{0.7\textwidth}
        \centering
\includegraphics[width =  1.1\textwidth, height =4.5cm, trim= 28 270 47 285, clip ]{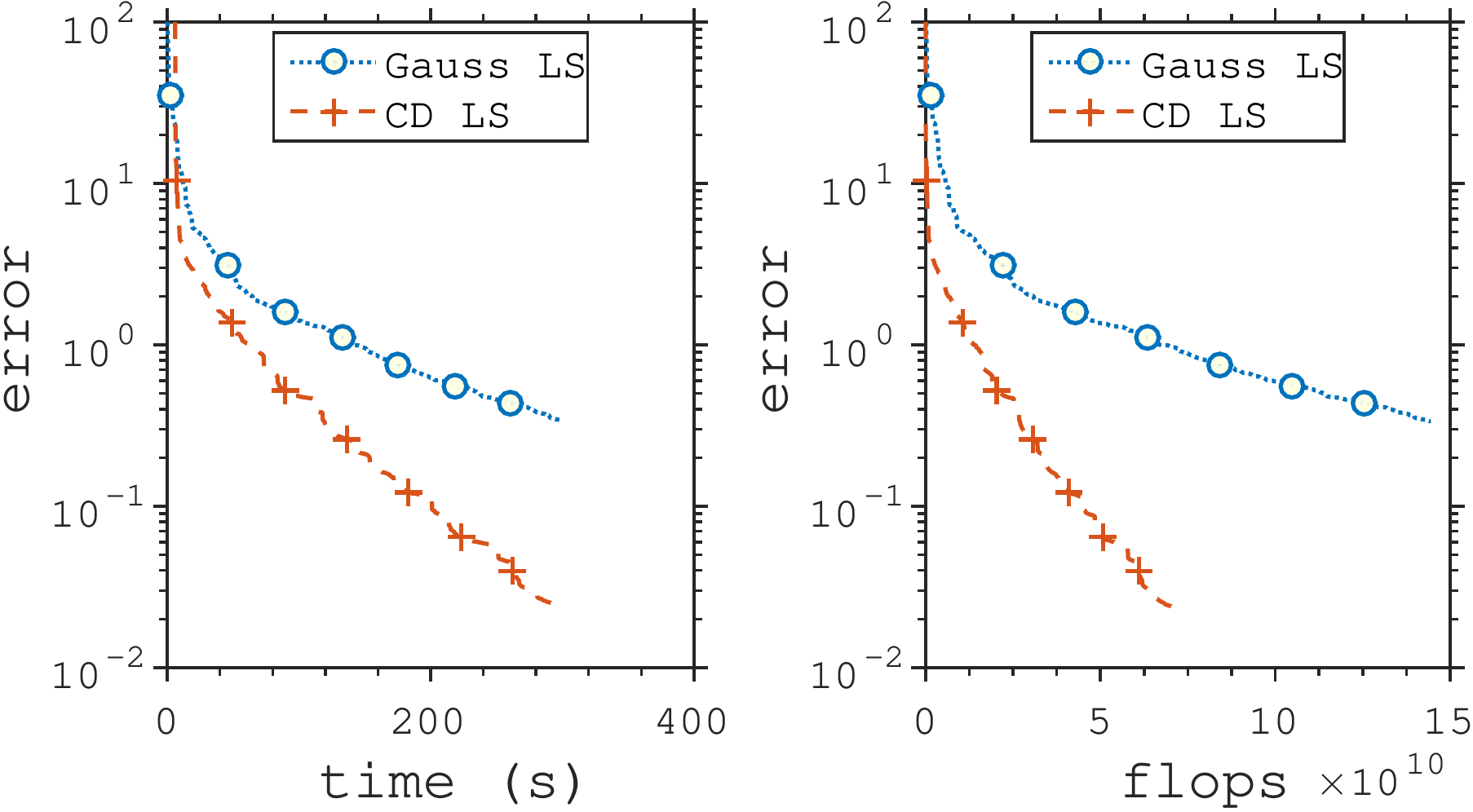}
        \caption{\texttt{SUSY}}
    \end{subfigure}%
    \hspace{0.05\textwidth}
    \begin{subfigure}[t]{0.7\textwidth}
        \centering
\includegraphics[width =  1.1\textwidth, height =4.5cm, trim= 30 270 47 285, clip ]{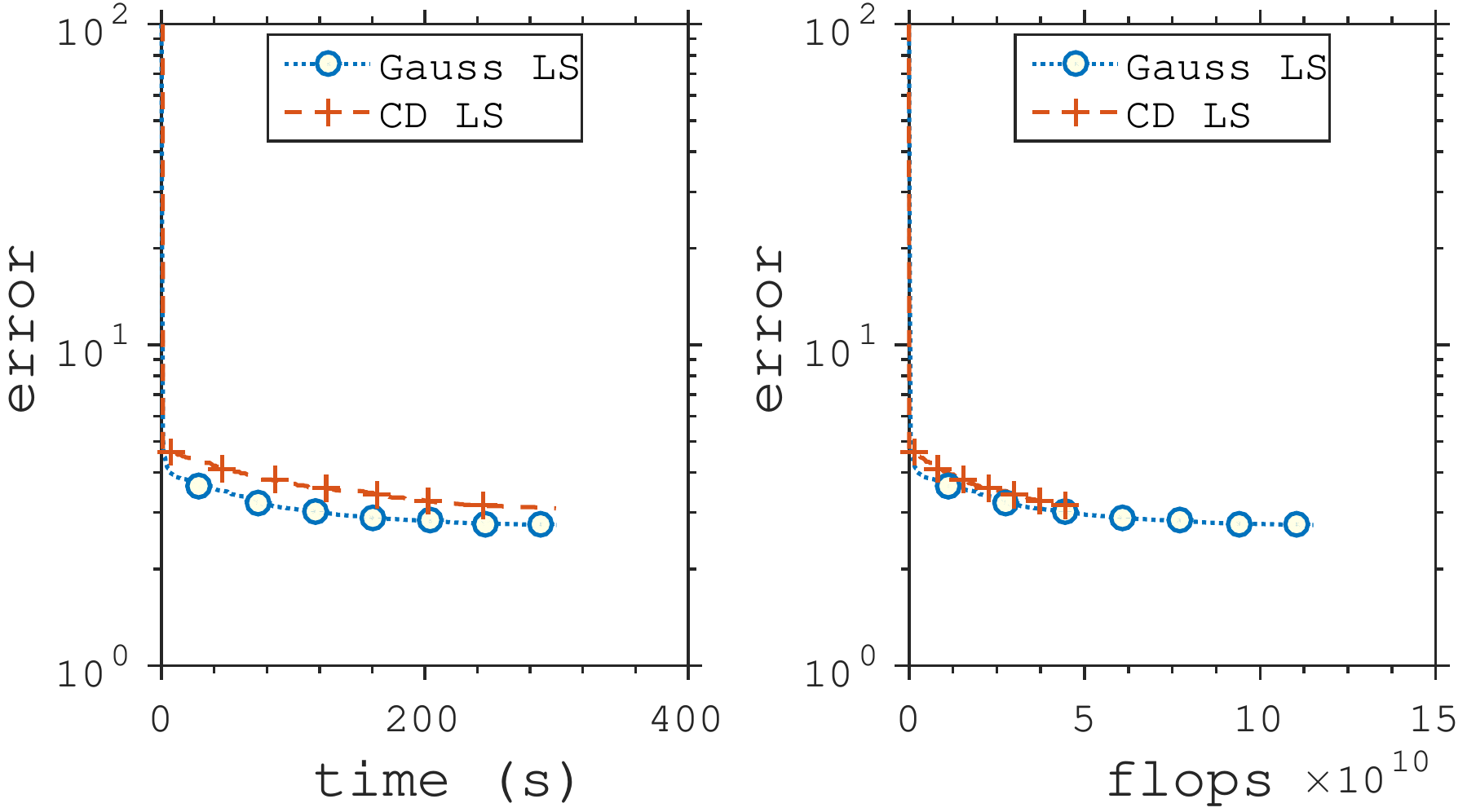}
        \caption{\texttt{covtype-libsvm-binary}}
    \end{subfigure}
    \caption{The performance of Gauss-LS and CD-LS methods on two LIBSVM test problems: (a)  \texttt{SUSY}: $(m;n)=(5\times 10^6; 18)$ (b) \texttt{covtype.binary}:  $(m;n)=(581,012; 54)$.} \label{fig:LIBSVM}
\end{figure}
       
\rob{Despite the higher iteration cost of the Gaussian methods, their performance, in terms of the wall-clock time, is comparable to performance of the discrete methods when the system matrix is sparse.}

\subsection{Bound for Gaussian convergence} \label{sec:numgaussbound}

\rob{Now we compare the error over the number iterations of the Gauss-LS method to theoretical rate of convergence given by the bound~\eqref{eq:rhoboundgauss}. For the Gauss-LS method~\eqref{eq:rhoboundgauss} becomes \[1-\frac{1}{n} \leq \rho \leq 1-\frac{2}{\pi}\lambda_{\min}\left( \frac{A^TA}{\norm{A}_F^2}\right).\]
 In Figures~\ref{fig:overtheouniform} and~\ref{fig:overtheoliver} we compare the empirical and theoretical bound on a random Gaussian matrix and the \texttt{liver-disorders} problem~~\cite{Chang2011}. Furthermore, we ran the Gauss-LS method 100 times and plot as dashed lines the 95\% and 5\% quantiles. These tests indicate that the bound it tight for well conditioned problems, such as Figure~\ref{fig:overtheouniform} in which the system matrix has a condition number equal to $1.94$. While in Figure~\ref{fig:overtheoliver} the system matrix has a condition number of $41.70$ and there is some much more slack between the empirical convergence and the theoretical bound. }

\begin{figure}
    \centering
    \begin{subfigure}[t]{0.47\textwidth}
\includegraphics[width =  \textwidth, trim= 110 280 100 285, clip ]{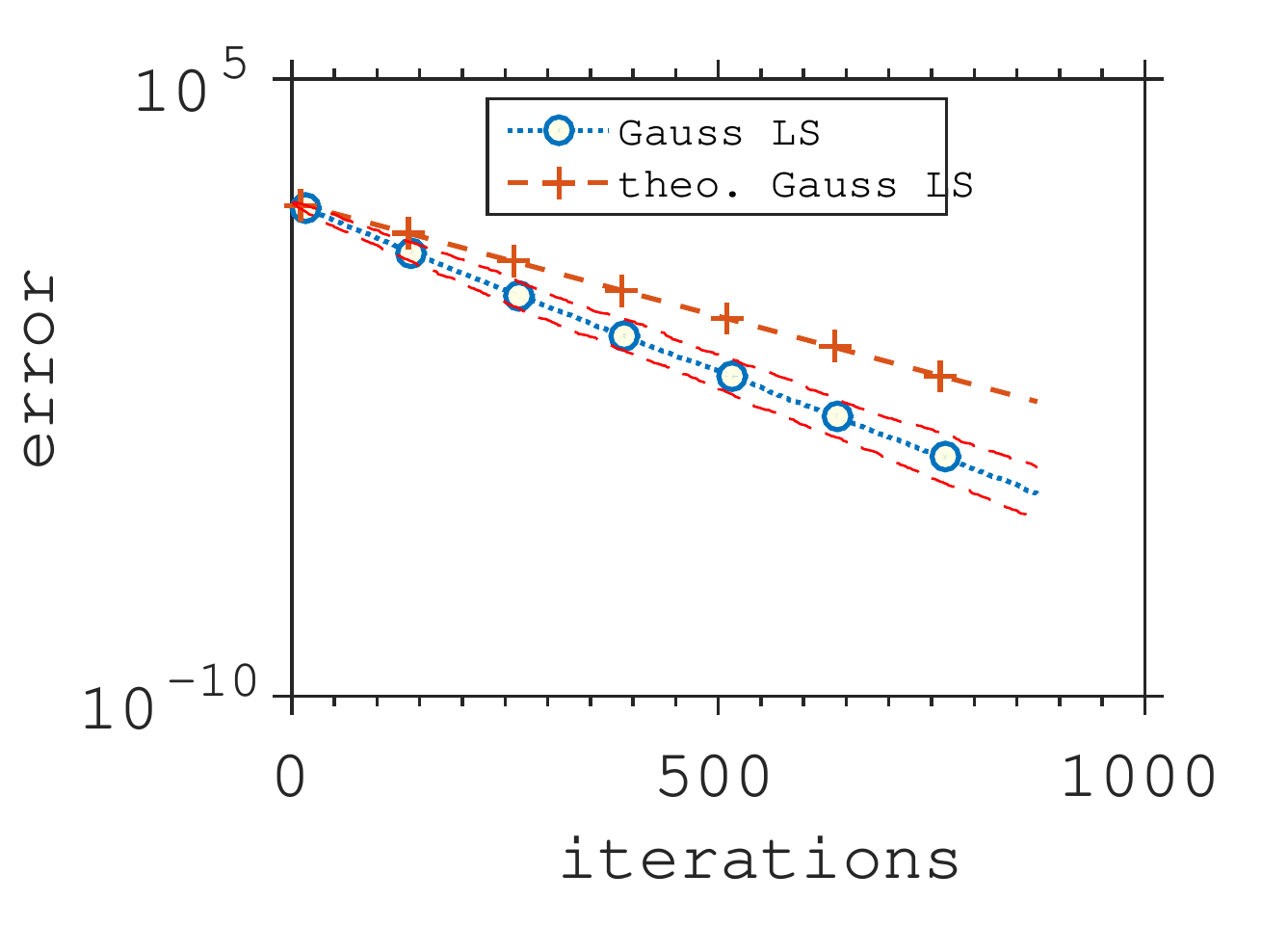}
        \caption{\texttt{rand}$(n,m)$}\label{fig:overtheouniform}
    \end{subfigure}%
    \hspace{0.05\textwidth}
    \begin{subfigure}[t]{0.47\textwidth}
\includegraphics[width =  \textwidth, trim= 110 280 100 285, clip ]{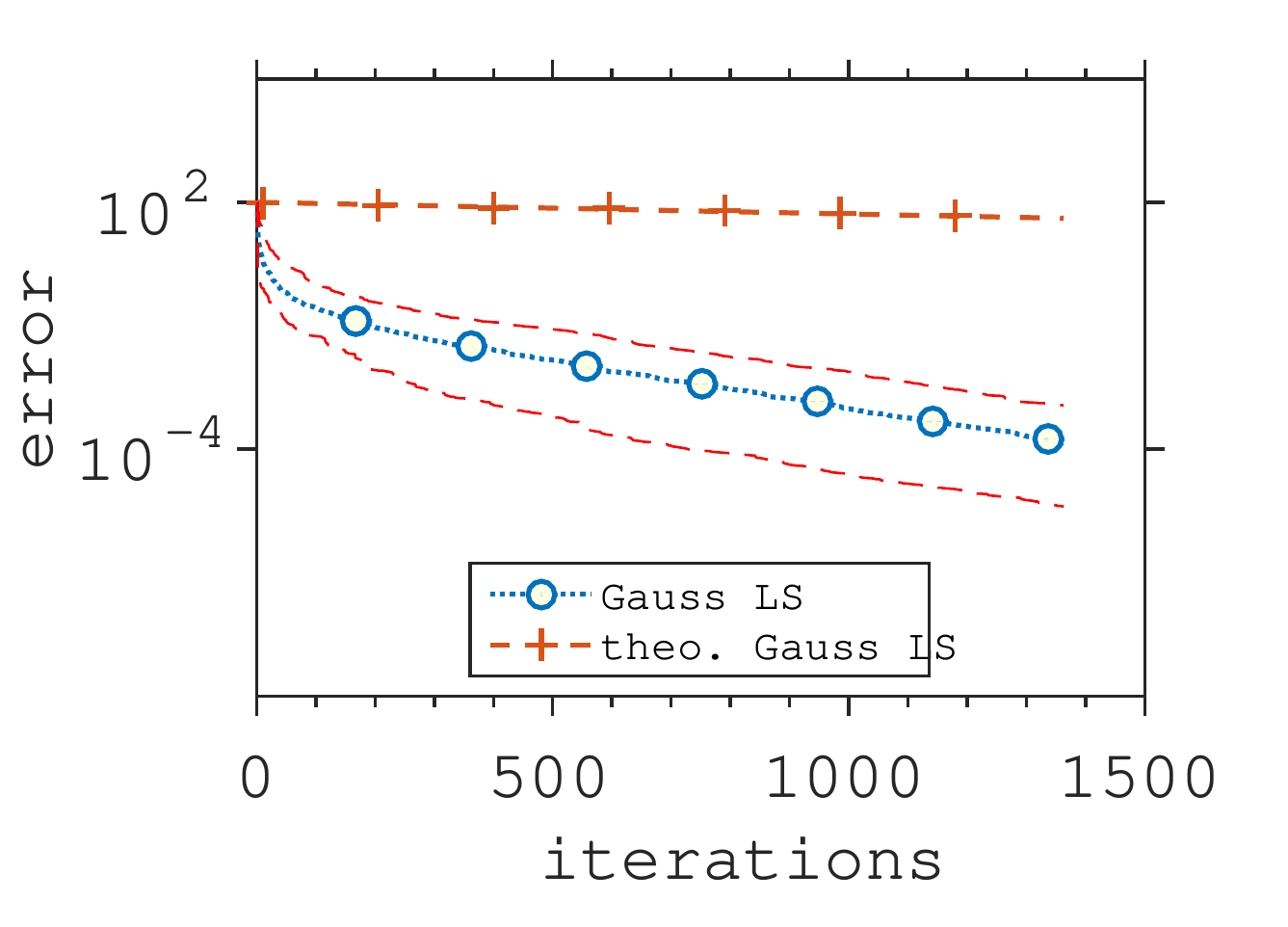}
        \caption{\texttt{liver-disorders}} \label{fig:overtheoliver}
    \end{subfigure}
    \caption{\rob{A comparison between the Gauss-LS method and the theoretical bound $\rho_{theo} \eqdef 1-\lambda_{\min}(A^TA)/\norm{A}_F^2$ on (a) \texttt{rand}$(n,m)$ with $(m;n)=(500,50), \kappa_2 =1.94$ and a dense solution generated with $x^*=$\texttt{ rand}$(n,1)$ (b) \texttt{liver-disorders} with $(m;n) =(345,6)$ and $\kappa_2 = 41.70.$} }\label{fig:overtheo}
\end{figure}       

\subsection{Positive Definite}
\rob{First we compare the two methods Gauss-pd~\eqref{eq:gausspd} and CD-pd~\eqref{eq:09j0s9jsss} on synthetic data in Figure~\ref{fig:2}.}
\begin{figure}
    \centering
    \begin{subfigure}[t]{0.47\textwidth}
\includegraphics[width =  \textwidth, trim= 30 270 40 285, clip ]{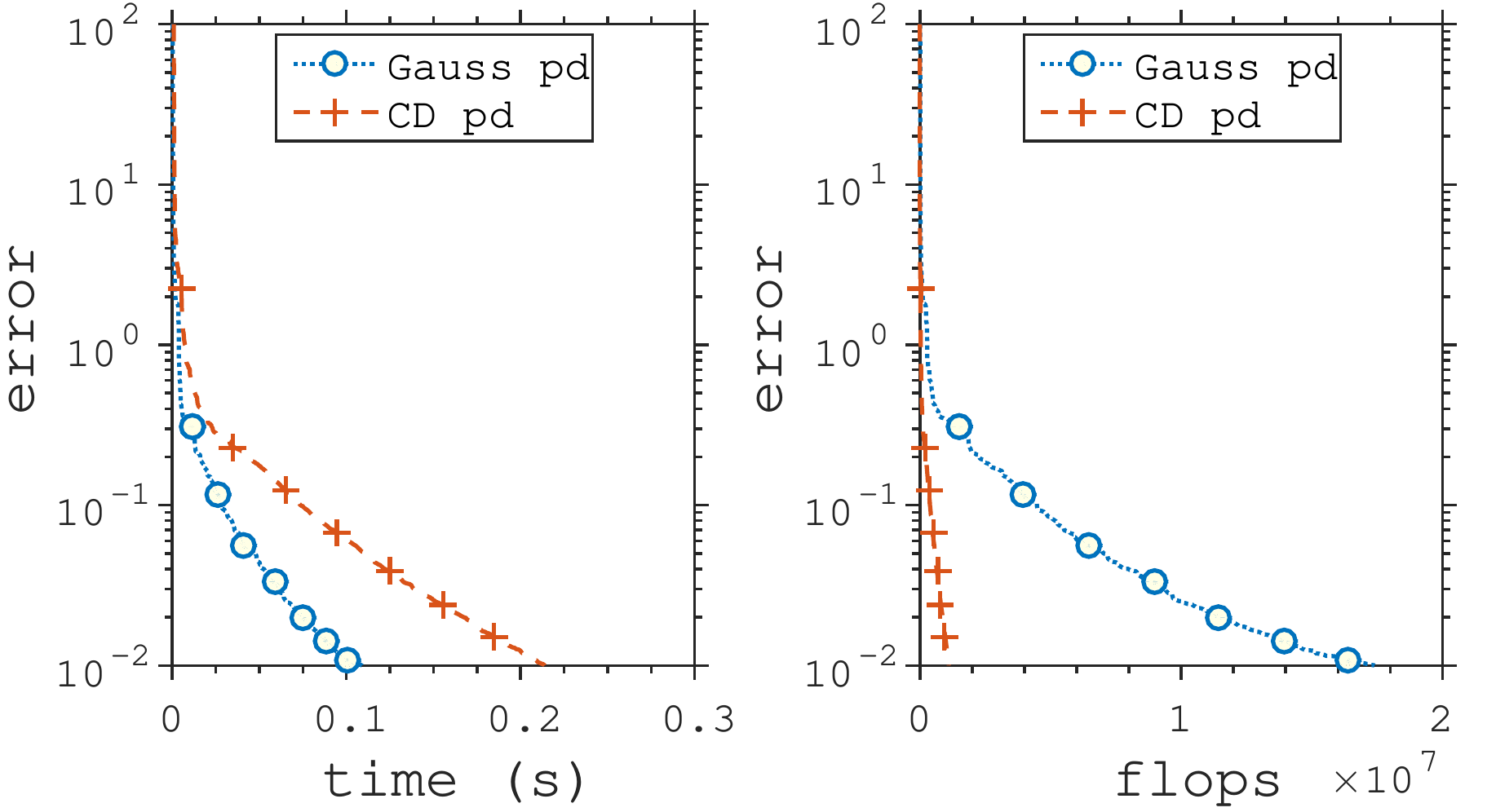}
    \end{subfigure}%
    \hspace{0.05\textwidth}
    \begin{subfigure}[t]{0.47\textwidth}
\includegraphics[width =  \textwidth, trim= 30 270 40 285, clip ]{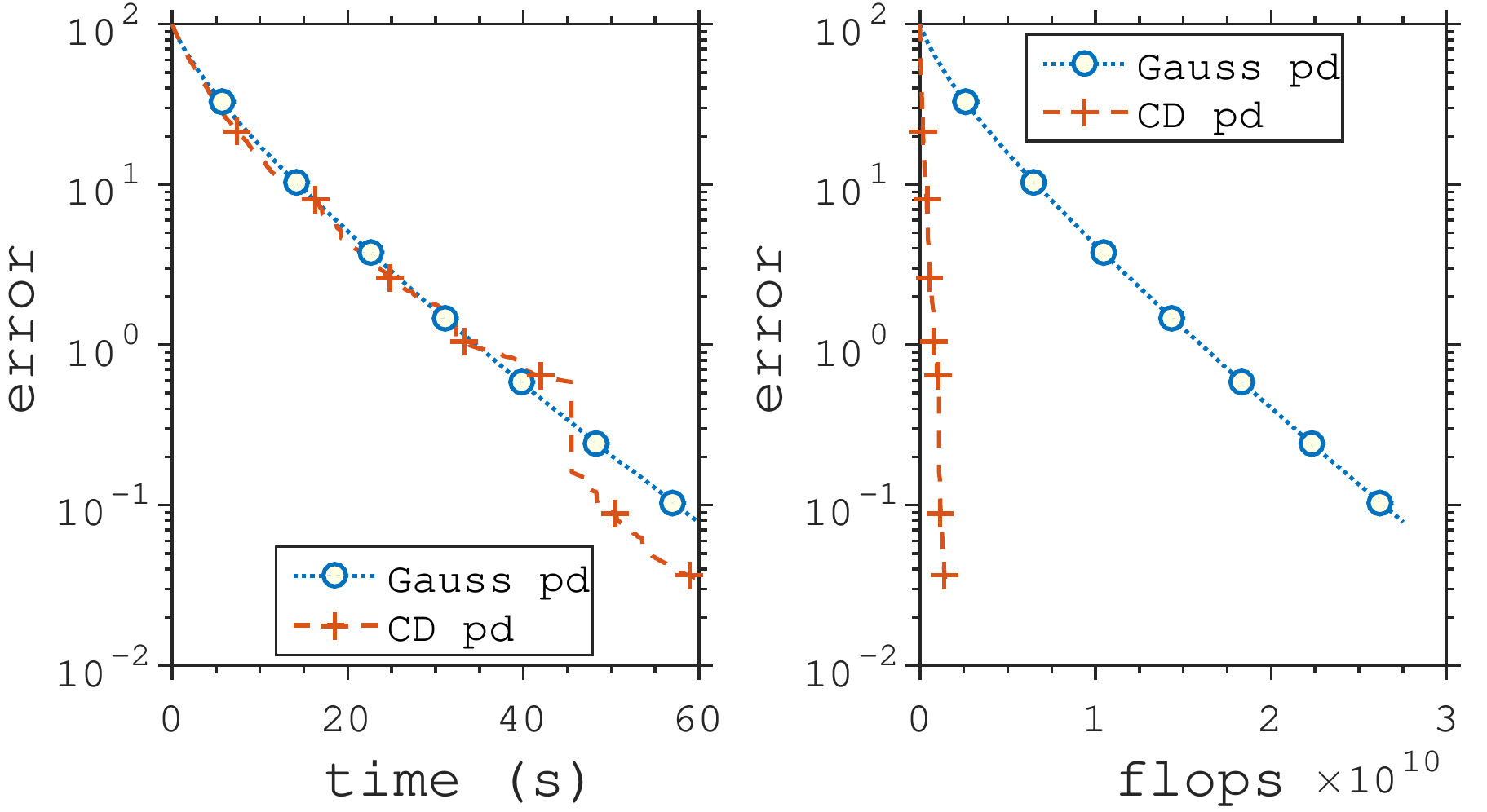}
    \end{subfigure}
    
\caption{Synthetic MATLAB generated problem. The Gaussian methods are more efficient on sparse matrices. LEFT: The \texttt{Hilbert Matrix} with $n=100$ and  condition number $\norm{A}\norm{A^{-1}}=6.5953\times10^{19}$. RIGHT: Sparse random matrix $A=$ \texttt{sprandsym} ($n$, {\tt density}, {\tt rc}, {\tt type}) with $n=1000$, {\tt density}$ =1/\log(n^2)$ and ${\tt rc}= 1/n=0.001$. Dense solution generated with $x^{*}=$\texttt{rand}$(n,1).$}\label{fig:2}
\end{figure}
Using the MATLAB function {\tt hilbert}, we can generate positive definite matrices with very high condition number, see Figure~\ref{fig:2}(LEFT). Both methods converge slowly and, despite the dense system matrix, the Gauss-pd method has a similar performance to CD-pd. In Figure~\eqref{fig:2}(RIGHT) we compare the two methods on a system generated by the  
 MATLAB function \texttt{sprandsym} ($m$, $n$, {\tt density}, {\tt rc}, {\tt type}), where {\tt density} is the percentage of nonzero entries, {\tt rc} is the reciprocal of the condition number and {\tt type=1} returns a positive definite matrix. \rob{The Gauss-pd  and the CD-pd method have a similar performance in terms of wall clock time on this sparse problem.}
\begin{figure}
    \centering
    \begin{subfigure}[t]{0.475\textwidth}
        \centering
\includegraphics[width =  \textwidth, trim= 30 270 40 285, clip ]{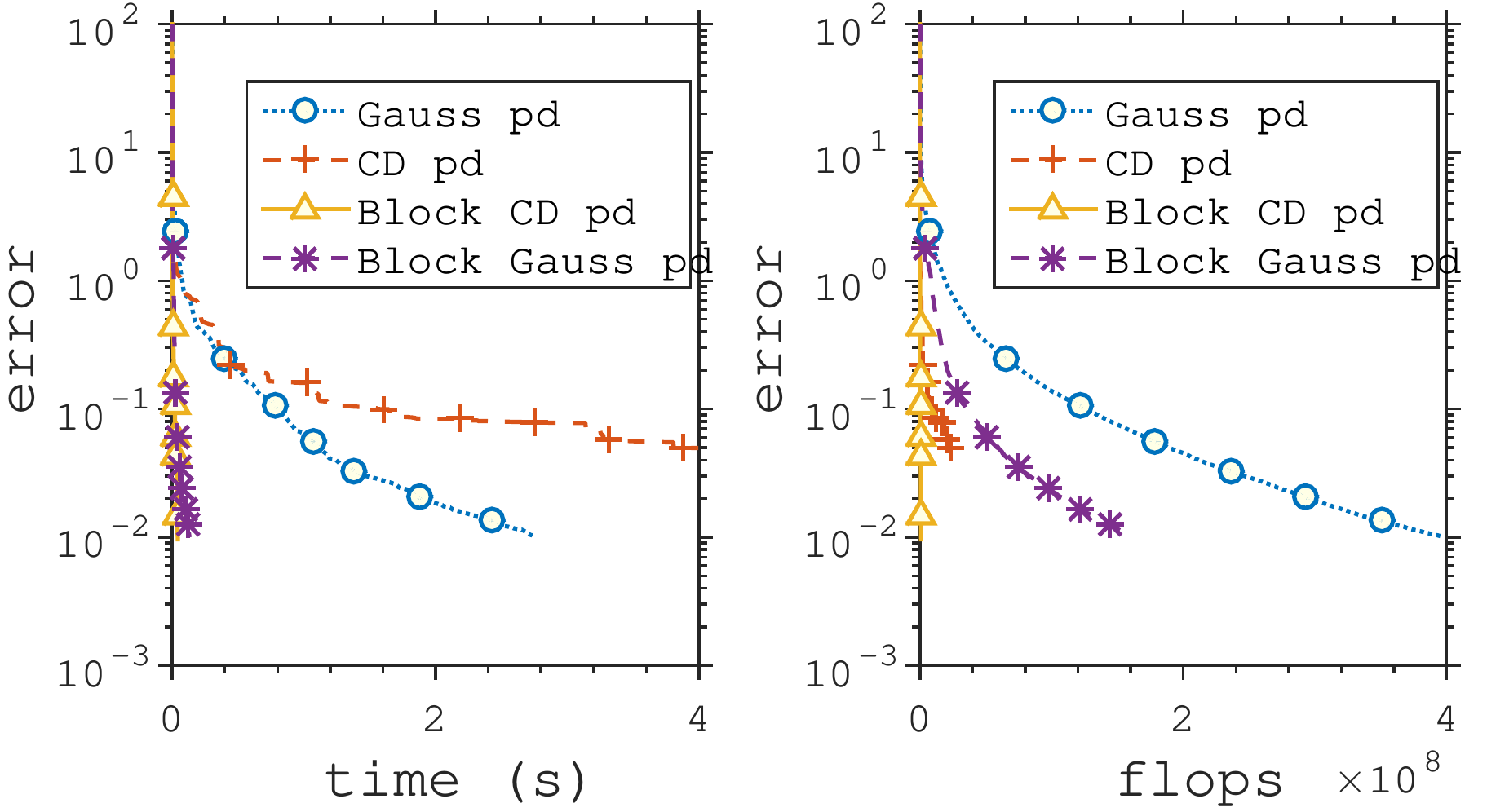}
        \caption{\texttt{aloi}}
    \end{subfigure}%
  \hspace{0.04\textwidth}
    \begin{subfigure}[t]{0.475\textwidth}
        \centering
\includegraphics[width =  \textwidth, trim= 30 270 40 285, clip ]{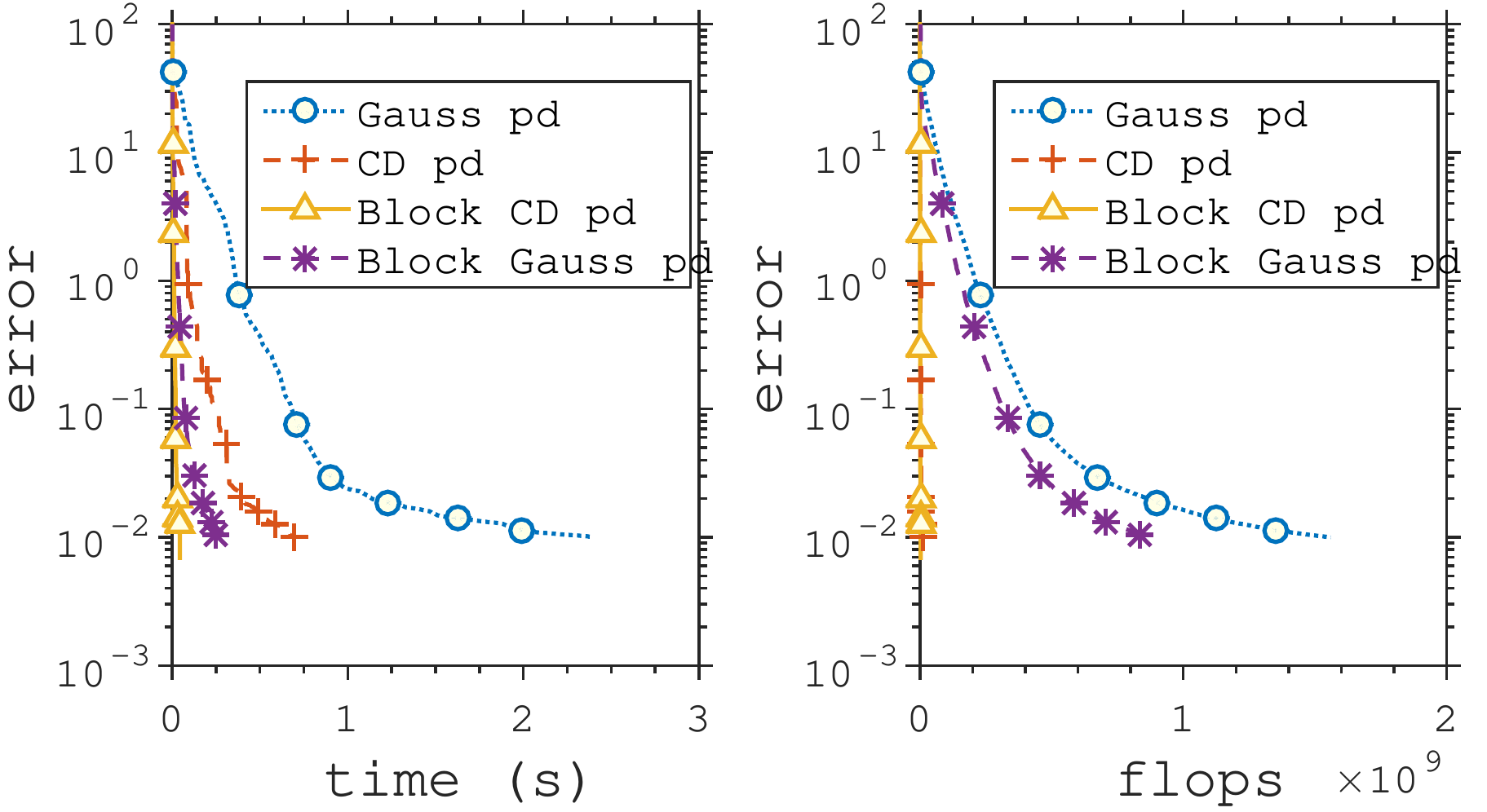}
        \caption{\texttt{protein}}
    \end{subfigure}
        \begin{subfigure}[t]{0.475\textwidth}
        \centering
\includegraphics[width =  \textwidth, trim=  30 270 40 285, clip ]{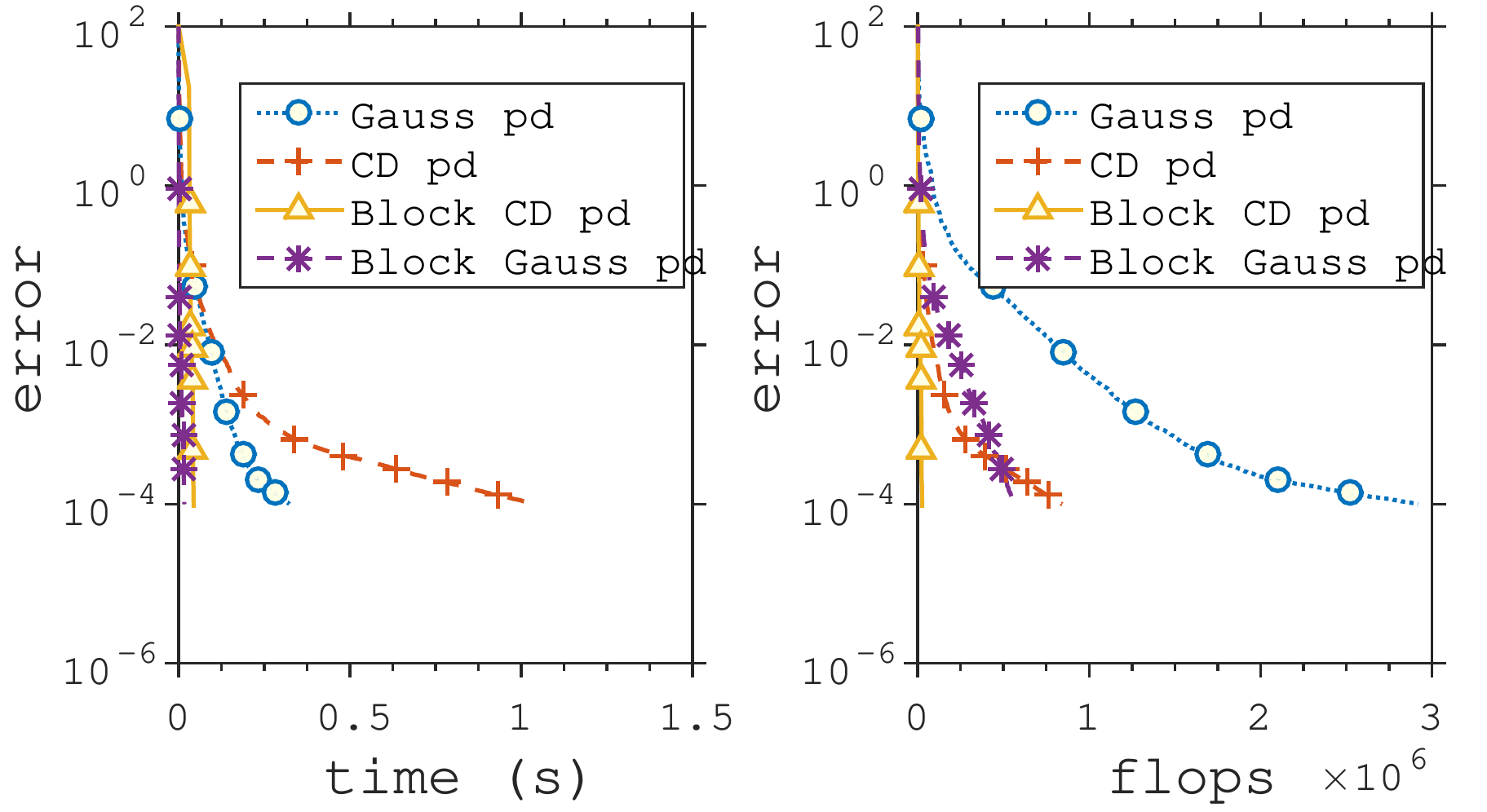}
        \caption{\texttt{SUSY}}
    \end{subfigure}%
  \hspace{0.04\textwidth}
    \begin{subfigure}[t]{0.475\textwidth}
        \centering
\includegraphics[width =  \textwidth, trim=  30 270 40 285, clip ]{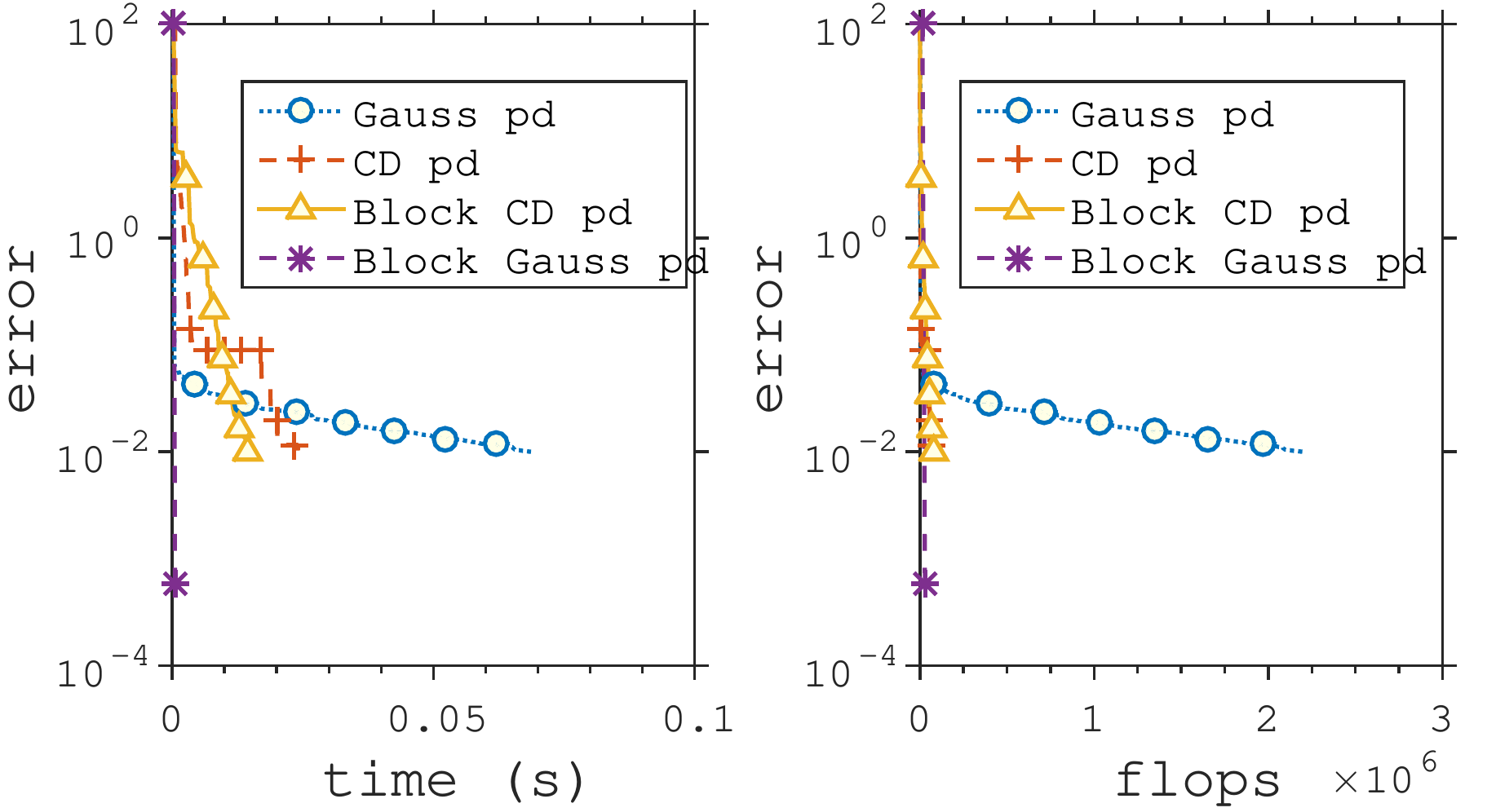}
        \caption{\texttt{covtype.binary}}
    \end{subfigure}
    \caption{The performance of Gaussian and Coordinate Descent pd methods on four ridge regression problems: (a) \texttt{aloi}: $(m;n)=(108,000;128)$ (b) \texttt{protein}: $(m; n)=(17,766; 357)$ (c)  \texttt{SUSY}: $(m;n)=(5\times 10^6; 18)$ (d) \texttt{covtype.binary}:  $(m;n)=(581,012; 54)$.} \label{fig:LIBSVMridge}
\end{figure}

\rob{ To appraise the performance gain in using block variants,
we perform tests using two block variants: the Randomized Newton method~\eqref{eq:CDpdblock},  which we will now refer to as the Block CD-pd method, and the Block Gauss-pd method~\eqref{eq:Bgausspd}. 
The size of blocks $q$ in both methods was set to $q = \sqrt{n}.$ To solve the $q\times q$ system required in the block methods, we use MATLAB's built-in direct solver, sometimes referred to as ``back-slash''. 

Next we test the Newton system $\nabla^2 f(w_0) x = - \nabla f(w_0)$,  arising from four ridge-regression problems of the form 
 \begin{equation}\label{eq:ridgeMatrix}
\min_{w\in \R^n}f(w)\eqdef \tfrac{1}{2} \norm{Aw-b}_2^2 + \tfrac{\lambda}{2} \norm{w}_2^2,
\end{equation}
using data from LIBSVM~\cite{Chang2011}. In particular, we set
$w_0=0$ and use $\lambda =1$ as the regularization parameter, whence $\nabla f(w_0) = A^Tb$ and $\nabla^2 f(w_0) = A^TA+ I$. 
}

  \rob{In terms of wall clock time, 
The Gauss-pd method converged faster on all problems accept the \texttt{protein} problem as compared to CD-pd. 
The two Block methods had a comparable performance on the \texttt{aloi} and the \texttt{SUSY} problem. 
The Block Gauss-pd method converged in one iteration on \texttt{covtype.binary}, and the Block CD-pd method converged fast on the \texttt{Protein} problem.  }

\rob{We now compare the methods on two positive definite matrices from the Matrix Market collection~\cite{Boisvert1997}, see Figure~\ref{fig:pdMM}. The right-hand side was generated using {\tt rand(n,1)}. }
 The  Block CD-pd method converged much faster on both problems. 
The lower condition number  ($\kappa_2=12$) of the \texttt{gr\_30\_30-rsa} problem resulted in fast convergence of all methods, see Figure~\ref{fig:gr_30_30-rsa}. While the high condition number  ($\kappa_2=4.3 \cdot 10^4$)  of the \texttt{bcsstk18} problem, resulted in a slow convergence for all methods, see Figure~\ref{fig:bcsstk18-rsa}.

\begin{figure}
    \centering
    \begin{subfigure}[t]{0.47\textwidth}
\includegraphics[width =  \textwidth, trim= 30 270 40 285, clip ]{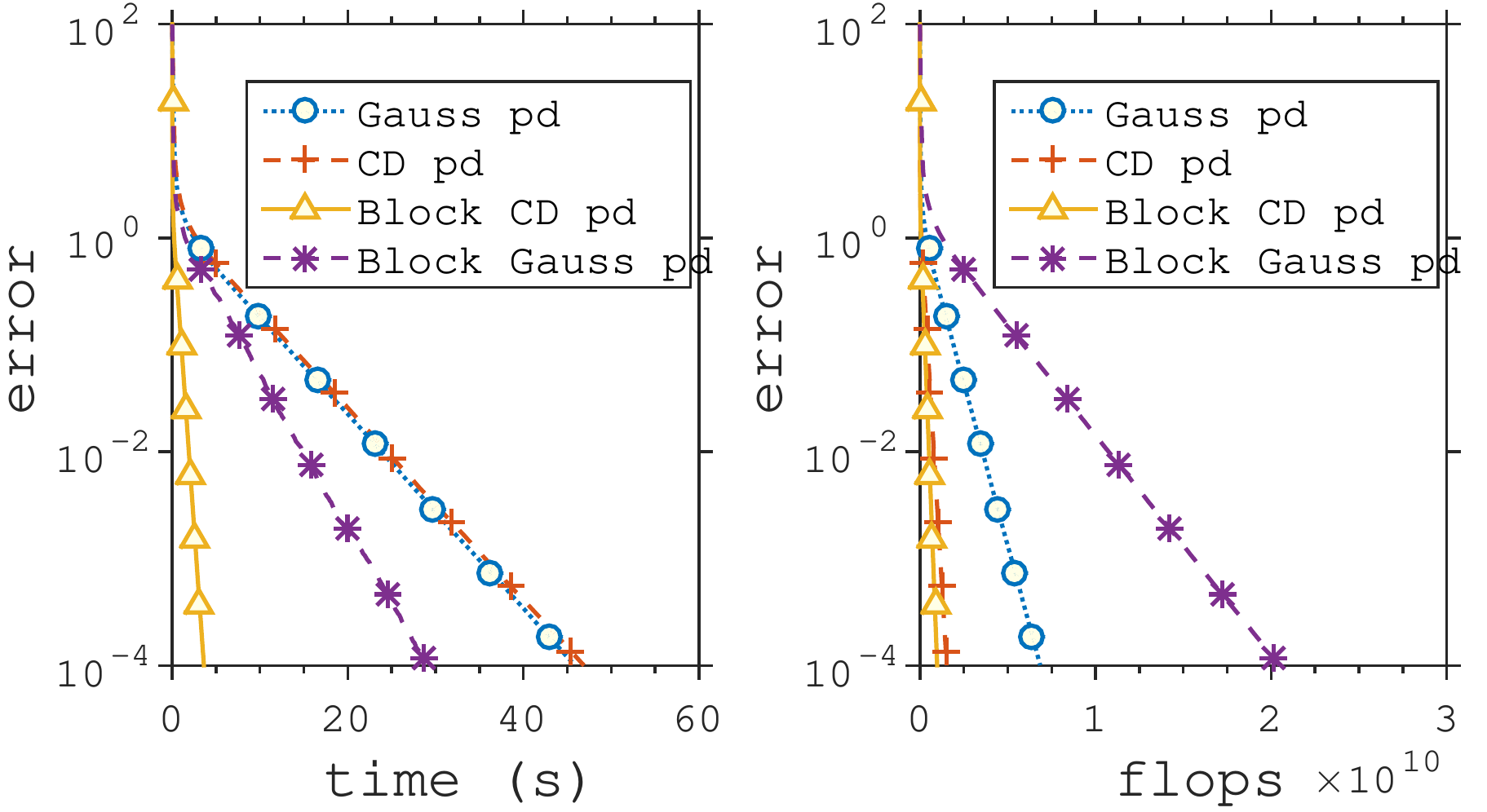}
        \caption{ \texttt{gr\_30\_30-rsa}}\label{fig:gr_30_30-rsa}
    \end{subfigure}%
    \hspace{0.05\textwidth}
    \begin{subfigure}[t]{0.47\textwidth}
\includegraphics[width =  \textwidth, trim= 30 270 40 285, clip ]{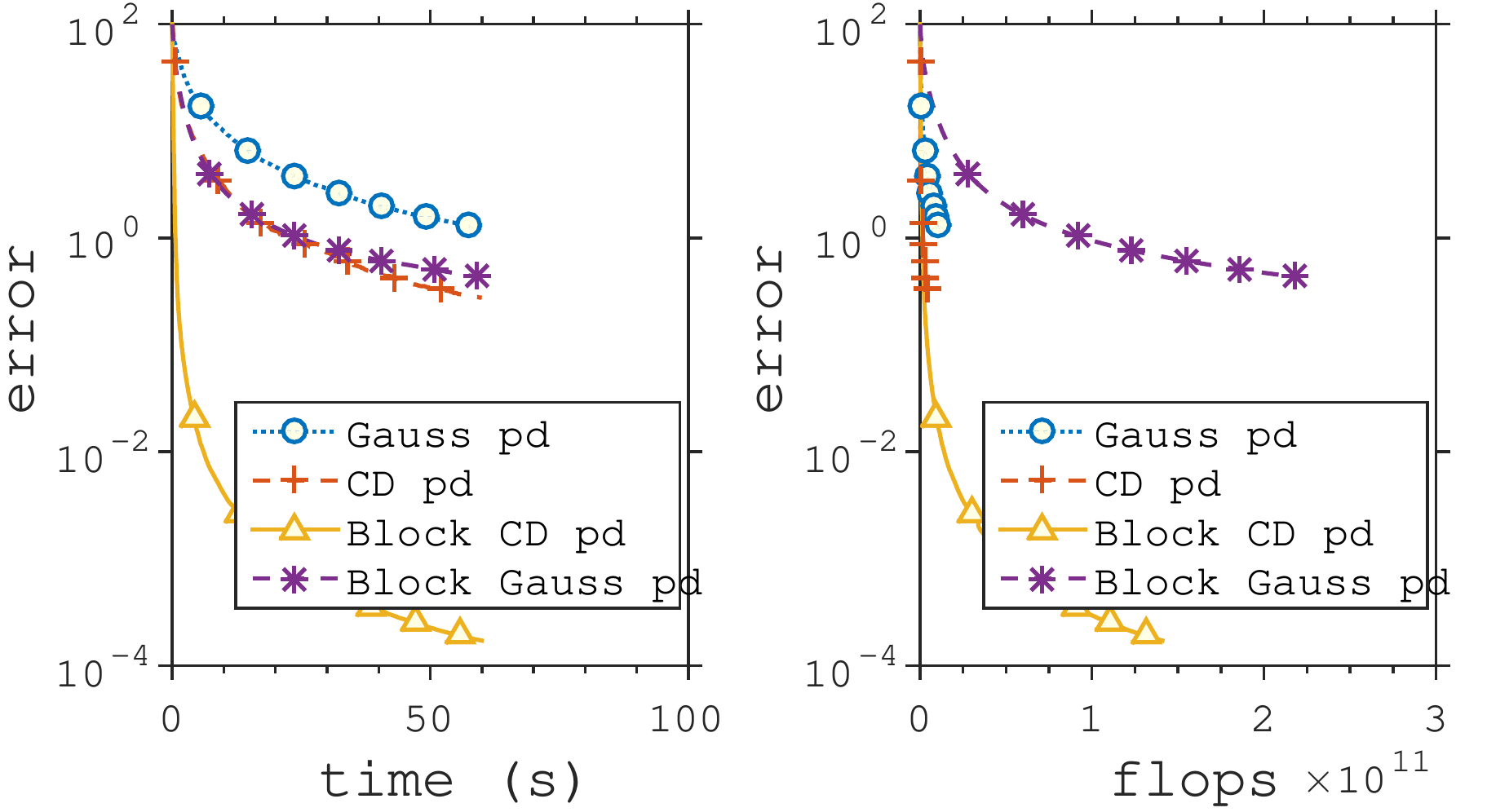}
        \caption{\texttt{bcsstk18}} \label{fig:bcsstk18-rsa}
    \end{subfigure}
    \caption{The performance of the Gauss-pd, CD-pd and the Block CD-pd methods on two linear systems from the MatrixMarket (a) \texttt{gr\_30\_30-rsa} with $n = 900$, $nnz = 4322$ ({\tt density}$=0.53\%$) and $\kappa_2 =12.$ (b) \texttt{bcsstk18} with $n = 11948$, $nnz=80519$ ({\tt density}$=0.1\%$) and  $\kappa_2 = 4.3 \cdot 10^{10} $.}\label{fig:pdMM}
\end{figure}

Despite the clear advantage of using a block variant, applying a block method that uses a direct solver can be infeasible on very ill-conditioned problems. As an example, 
 applying the Block CD-pd to the Hilbert system, and using MATLAB back-slash solver to solve the inner $q\times q$ systems, resulted in large numerical inaccuracies, and ultimately, prevented the method from converging. This occurred because the submatrices of the Hilbert matrix are also very ill-conditioned.

\subsection{Comparison between Optimized and Convenient probabilities}\label{sec:numopt}
 We compare the practical performance of using the convenient probabilities~\eqref{eq:convprob} against using the optimized probabilities by solving~\eqref{eq:optconv}. \rob{We solved~\eqref{eq:optconv} using the disciplined convex  programming solver \texttt{cvx}~\cite{cvx} for MATLAB.}

In Table~\ref{tab:CD-pd-opt} we compare the different convergence rates for the CD-pd method, where $\rho_c$ is the convenient convergence rate~\eqref{eq:rhoconv}, $\rho^*$ the optimized convergence rate, $(1-1/n)$ is the lower bound, and in the final ``optimized time(s)'' column the time taken to compute $\rho^*$. In Figure~\ref{fig:CD-pd-opt}, we compare the empirical convergence of the CD-pd method when using the convenient probabilities~\eqref{eq:convprob} and CD-pd-opt, the CD-pd method with the optimized probabilities, on four ridge regression problems and a uniform random matrix.
We ran each method for $60$ seconds. 

 In most cases using the optimized probabilities results in a much faster convergence, see Figures~\ref{fig:aloi-opt},~\ref{fig:liver-opt},~\ref{fig:mushrooms-opt} and~\ref{fig:uniform-opt}. \rob{In particular, the $7.401$} seconds spent calculating the optimal probabilities for \texttt{aloi} paid off with a convergence that was $55$ seconds faster. The \texttt{mushrooms} problem was insensitive to the choice of probabilities~\ref{fig:mushrooms-opt}. Finally despite $\rho^*$ being much less than $\rho_c$ on \texttt{covtype}, see Table~\ref{tab:CD-pd-opt}, using optimized probabilities  resulted in \rob{an initially slower method, though CD-pd-opt eventually catches up as CD-pd stagnates,} see Figure~\ref{fig:covtype-opt}.


\begin{table} \centering
\footnotesize
\begin{tabular}{c|lll|c} \hline
data set & $\rho_c$ &$\rho^* $ & $1-1/n$ & optimized time(s) \\ \hline
\texttt{rand}(50,50) & $1-2\cdot 10^{-6}$ & $1-3.05\cdot 10^{-6}$ & $1-2.10^{-2}$ & 1.076\\
 {\tt mushrooms-ridge} & $1-5.86\cdot 10^{-6}$ & $1-7.15\cdot 10^{-6}$ & $1-8.93\cdot 10^{-3}$ &  4.632\\
  {\tt aloi-ridge} & $1-2.17\cdot 10^{-7}$ & $1-1.26\cdot 10^{-4}$ & $1-7.81\cdot 10^{-3}$ &  7.401\\
  {\tt liver-disorders-ridge} & $1-5.16\cdot 10^{-4}$ & $1-8.25\cdot 10^{-3}$ & $1-1.67\cdot 10^{-1}$ &  0.413\\  
   {\tt covtype.binary-ridge} & $1-7.57\cdot 10^{-14}$ & $1-1.48\cdot 10^{-6}$ & $1-1.85\cdot 10^{-2}$ &  1.449\\  	
\end{tabular}
\caption{Optimizing the convergence rate for CD-pd.}
\label{tab:CD-pd-opt}
\end{table}

\begin{figure}[H]
    \centering
    \begin{subfigure}[t]{0.45\textwidth}
        \centering
\includegraphics[width =  \textwidth, trim=120 295 120 300, clip ]{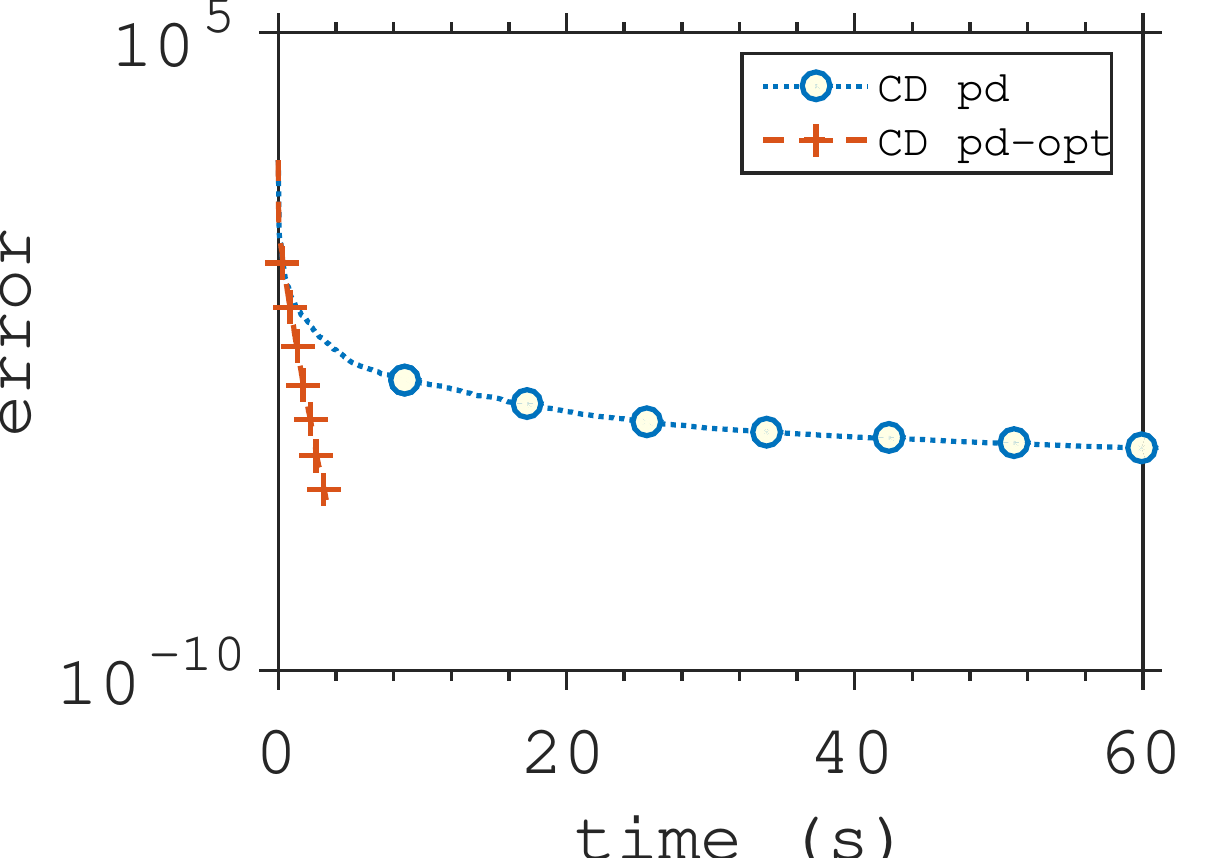}
        \caption{\texttt{aloi}}
        \label{fig:aloi-opt}
    \end{subfigure}%
  \hspace{0.02\textwidth}
    \begin{subfigure}[t]{0.45\textwidth}
        \centering
\includegraphics[width =  \textwidth, trim=120 295 120 300, clip ]{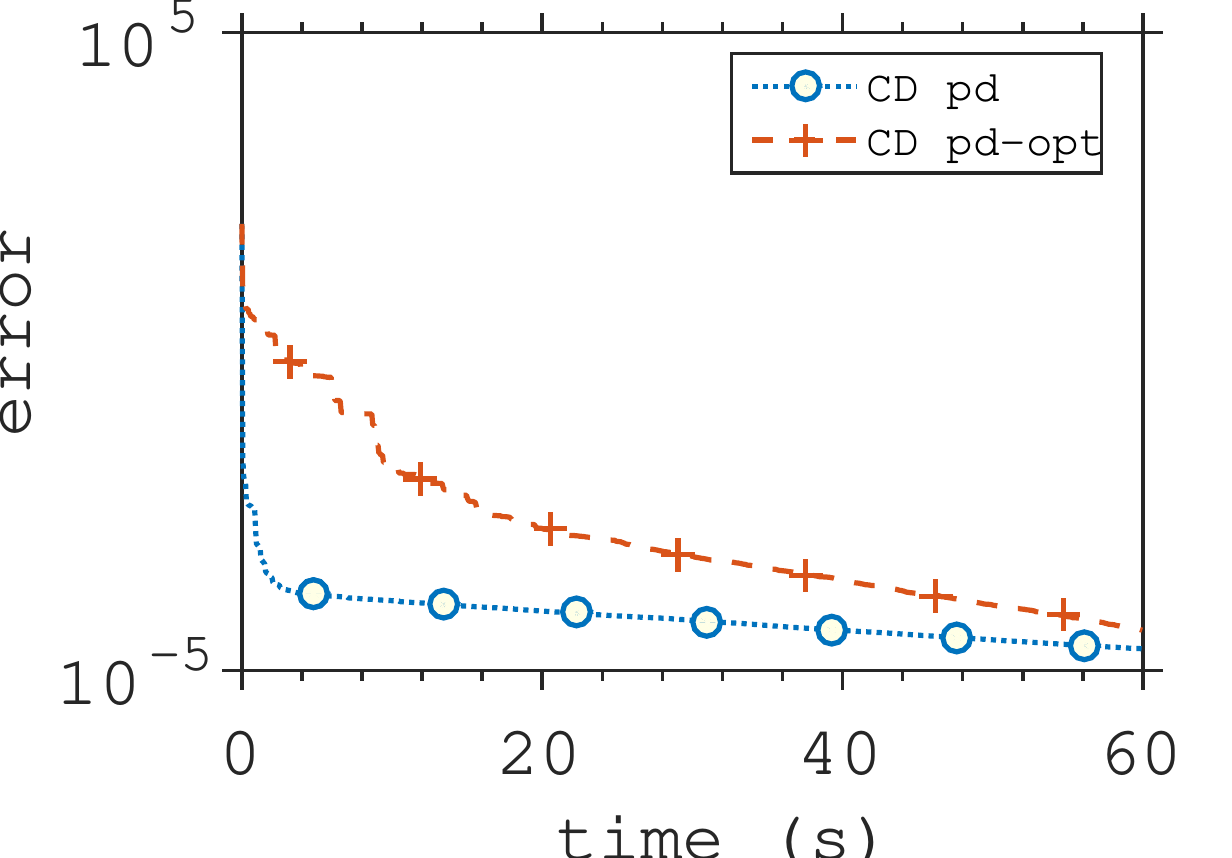}
        \caption{\texttt{covtype.libsvm.binary}}
        \label{fig:covtype-opt}
    \end{subfigure}
    \begin{subfigure}[t]{0.45\textwidth}
        \centering
\includegraphics[width =  \textwidth, trim=120 295 120 300, clip ]{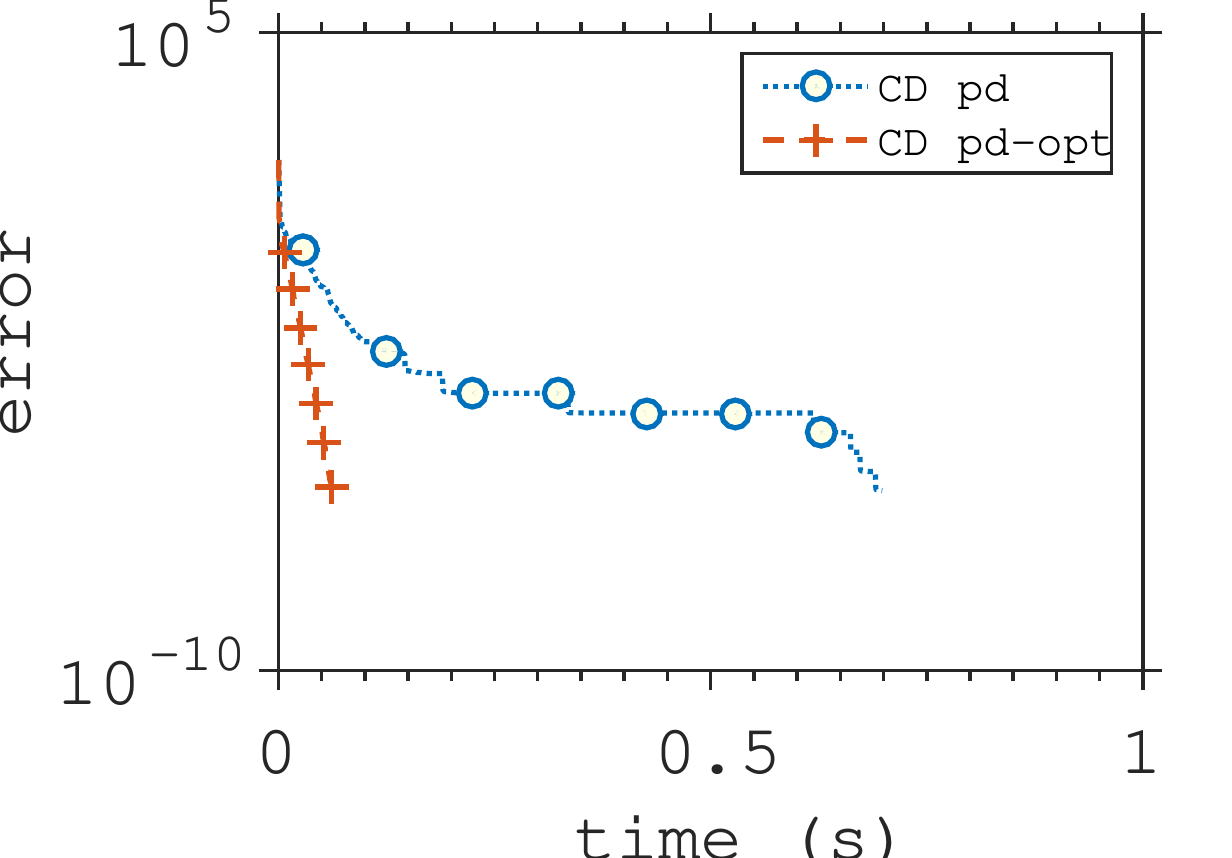}
        \caption{\texttt{liver-disorders-ridge}}
         \label{fig:liver-opt}
    \end{subfigure}
    \begin{subfigure}[t]{0.45\textwidth}
        \centering
\includegraphics[width =  \textwidth, trim=120 295 120 300, clip ]{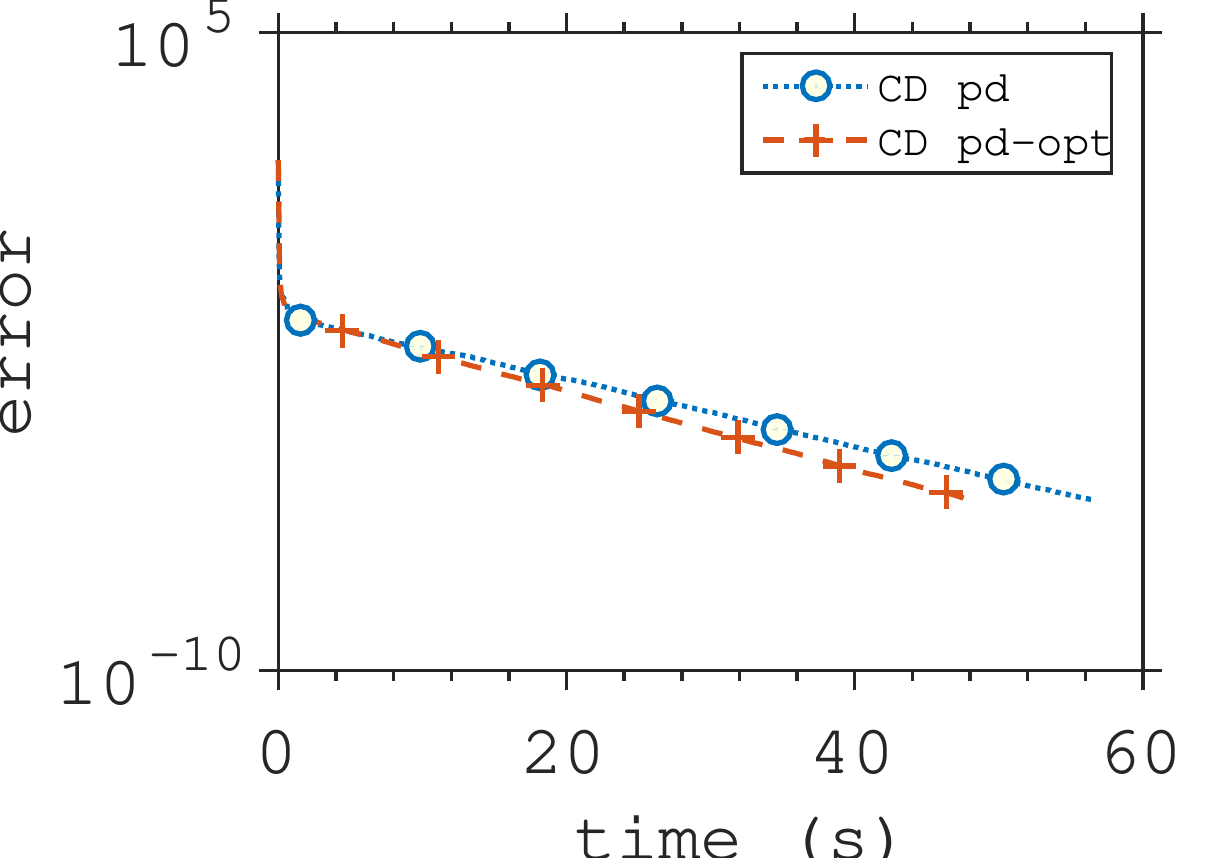}
        \caption{\texttt{mushrooms-ridge-opt}}
         \label{fig:mushrooms-opt}
    \end{subfigure} 
    \begin{subfigure}[t]{0.45\textwidth}
        \centering
\includegraphics[width =  \textwidth, trim=120 295 120 300, clip ]{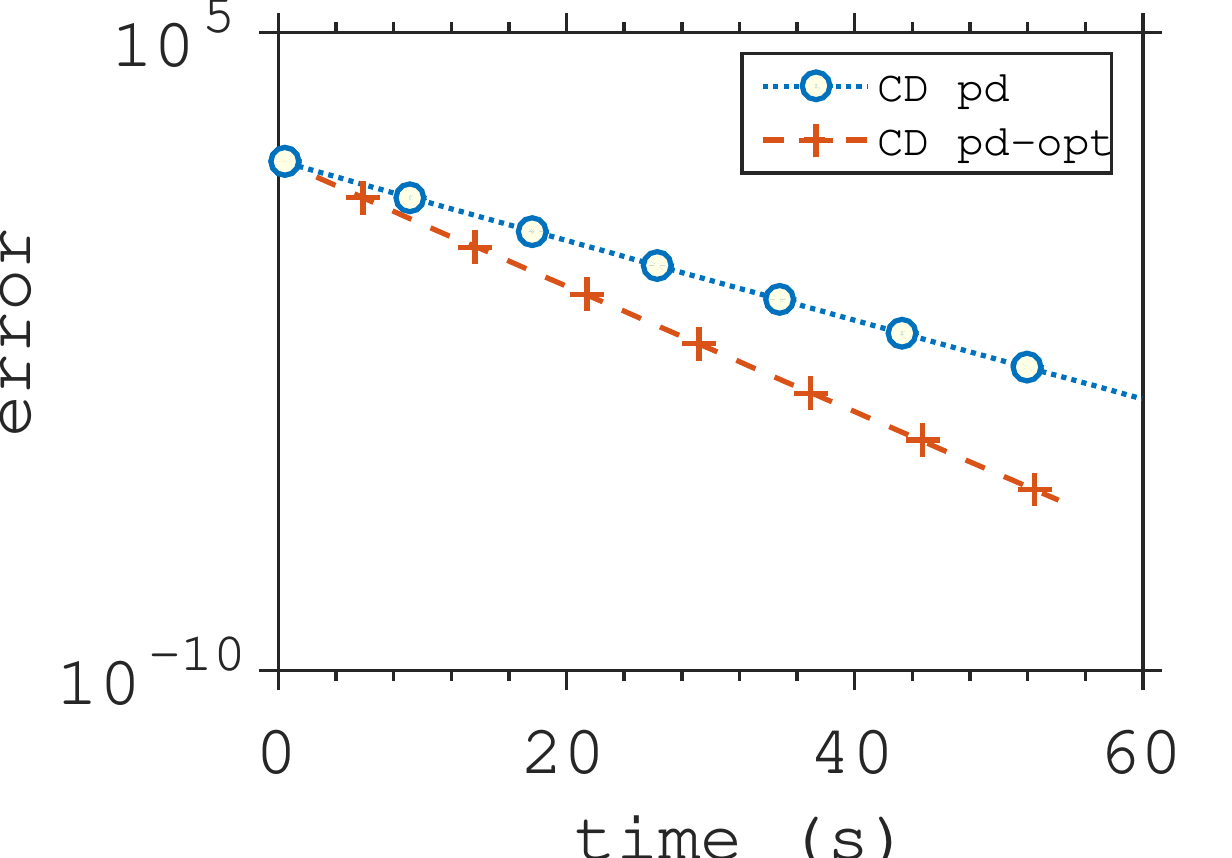}
        \caption{\texttt{uniform-random-50X50-opt}}
        \label{fig:uniform-opt}
    \end{subfigure}    
    \caption{The performance of CD-pd and optimized CD-pd methods on (a) \texttt{aloi}: $(m;n)=(108,000;128)$ (b) \texttt{covtype.binary}:  $(m;n)=(581,012; 54)$ (c) \texttt{liver-disorders}: $(m;n)=(345,6)$ (c)\texttt{mushrooms}: $(m;n) = (8124,112)$  (d) \texttt{uniform-random-50X50}.} \label{fig:CD-pd-opt}
\end{figure}

In Table~\ref{tab:RK-pd-opt} we compare the different convergence rates for the RK method. In Figure~\ref{fig:kaczmacz-opt}, we then compare the empirical convergence of the RK method when using the convenient probabilities~\eqref{eq:convprob} and RK-opt, the RK method with the optimized probabilities by solving~\eqref{eq:optconv}. The rates $\rho^*$ and $\rho_c$ for the  \texttt{rand}(500,100) problem are similar, and accordingly, both the convenient and optimized variant converge at a similar rate in practice, see Figure~\ref{fig:kaczmacz-opt}b. While the difference in the rates  $\rho^*$ and $\rho_c$ for the  {\tt liver-disorders} is more pronounced, and in this case, the $0.83$ seconds invested in obtaining the optimized probability distribution paid off in practice, as the optimized method converged $1.25$ seconds before the RK method with the convenient probability distribution, see Figure~\ref{fig:kaczmacz-opt}a.

\begin{table} \centering
\footnotesize
\begin{tabular}{c|lll|c} \hline
data set & $\rho_c$ &$\rho^* $ & $1-1/n$ & optimized time(s) \\ \hline
\texttt{rand}(500,100) & $1-3.37\cdot 10^{-3}$ & $1-4.27\cdot 10^{-3}$ & $1-1\cdot 10^{-2}$ & 33.121\\
  {\tt liver-disorders} & $1-5.16\cdot 10^{-4}$ & $1-4.04\cdot 10^{-3}$ & $1-1.67 \cdot 10^{-1}$ &  0.8316\\
\end{tabular}
\caption{Optimizing the convergence rate for randomized Kaczmarz.}
\label{tab:RK-pd-opt}
\end{table}

\begin{figure}
    \centering
    \begin{subfigure}[t]{0.45\textwidth}
        \centering
\includegraphics[width =  \textwidth, trim=120 295 120 300, clip ]{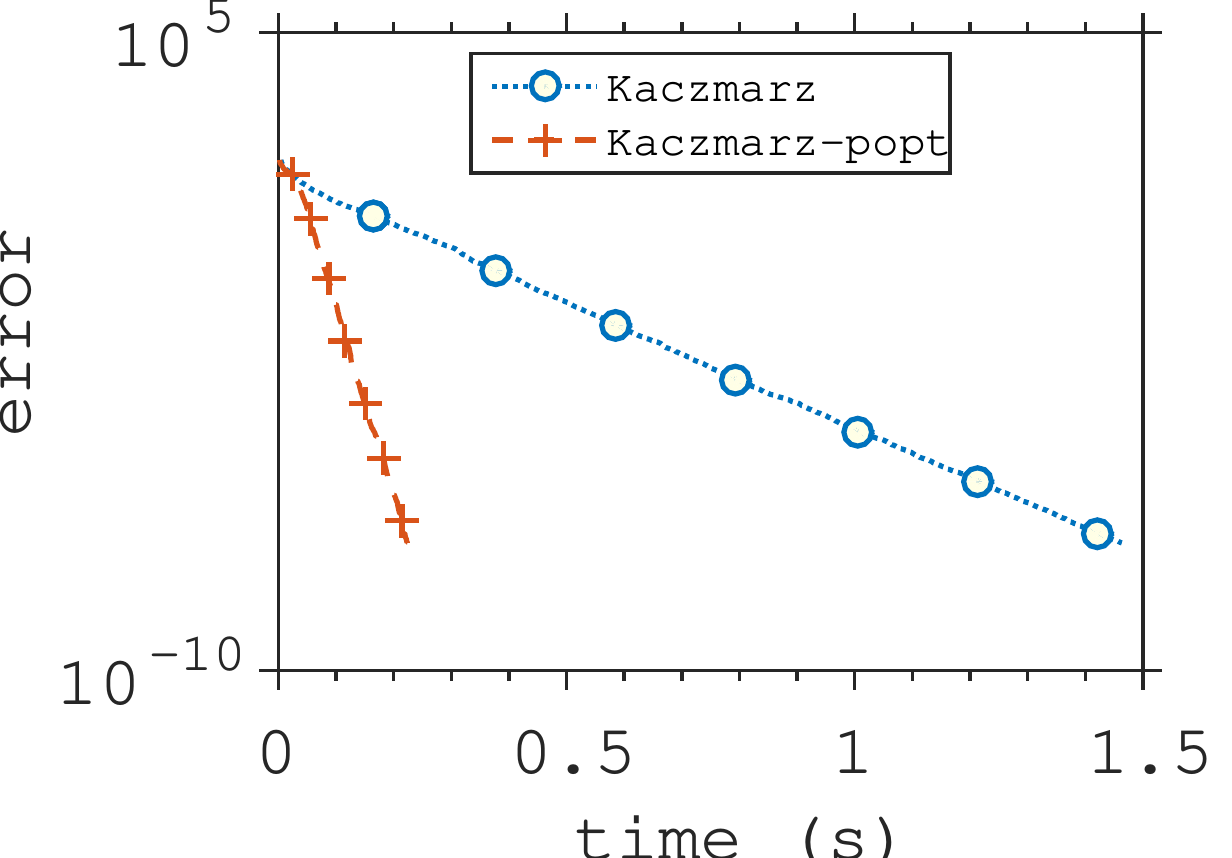}
        \caption{\texttt{liver-disorders-popt-k}}
    \end{subfigure}%
  \hspace{0.02\textwidth}
    \begin{subfigure}[t]{0.45\textwidth}
        \centering
\includegraphics[width =  \textwidth, trim=120 295 120 300, clip ]{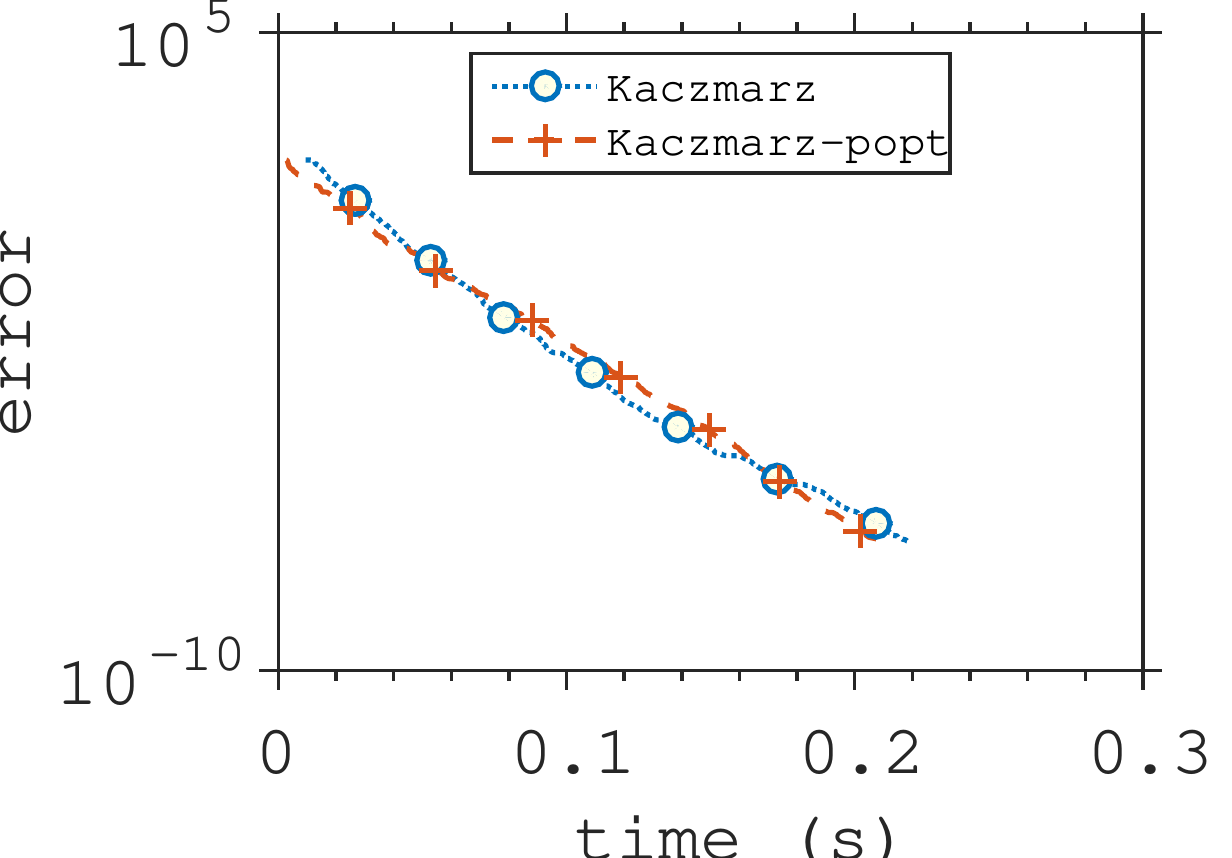}
        \caption{\texttt{rand}(500,100)}
    \end{subfigure}
    \caption{The performance of Kaczmarz and optimized Kaczmarz methods on  (a) \texttt{liver-disorders}: $(m;n)=(345,6)$  (b) \texttt{rand}(500,100)}
\label{fig:kaczmacz-opt}
\end{figure} 

We conclude from these tests that the choice of the probability distribution can greatly affect the performance of the method. Hence, it is worthwhile to develop approximate solutions to~\eqref{eq:opt_sampling}.

\section{Conclusion} \label{sec:conclusion}

We present a unifying framework for the randomized Kaczmarz method, randomized Newton method, randomized coordinate descent method and  random Gaussian pursuit. Not only can we recover these methods by selecting appropriately the parameters $S$ and $B$, but also, we can analyse them and their block variants through a single Theorem~\ref{theo:Enormconv}. Furthermore, we obtain a new lower bound for all these methods in Theorem~\ref{theo:normEconv}, and in the discrete case, recover all known convergence rates expressed in terms of the scaled condition number in Theorem~\ref{theo:convsingleS}. 

The Theorem~\ref{theo:convsingleS} also suggests a preconditioning strategy. Developing preconditioning methods are important for reaching a higher precision solution on ill-conditioned problems. For as we have seen in the numerical experiments, the randomized methods struggle to bring the solution within $10^{-2}$ relative error when the matrix is ill-conditioned.

This is also a framework on which randomized methods for linear systems can be designed. As an example, we have designed a new block variant of RK, a new Gaussian Kaczmarz method \rob{and a new Gaussian block method for positive definite systems}. Furthermore, the flexibility of our framework and the general convergence Theorems~\ref{theo:Enormconv} and~\ref{theo:normEconv} allows one to tailor the probability distribution of $S$ to a particular problem class. For instance, other continuous distributions such uniform, or other discrete distributions such Poisson might be more suited to a particular class of problems. 

Numeric tests reveal that the new Gaussian methods designed for overdetermined systems are competitive on sparse problems, as compared to the Kaczmarz and CD-LS methods. The Gauss-pd also proved competitive as compared to  CD-pd on all tests. Though, when applicable, the combined efficiency of using a direct solver and an iterative procedure, such as in  Block CD-pd method, proved the most efficient.

The work opens up many possible future venues of research.  Including investigating accelerated convergence rates through preconditioning strategies based on Theorem~\ref{theo:convsingleS} or by obtaining approximate optimized probability distributions~\eqref{eq:optconv}.  

\subsubsection*{Acknowledgments}
The authors would like to thank Prof. Sandy Davie for useful discussions relating to  Lemma~\ref{lem:2Dgausscov}\rob{, and Prof. Joel Tropp for invaluable suggestions regarding Lemma~\ref{lem:gaussdiag}.}

{\small
\printbibliography
}

\appendix

\section{A Bound on the Expected Gaussian Projection Matrix}
\rob{
\begin{lemma} \label{lem:gaussdiag}
Let $D \in \R^{n\times n}$ be a positive definite diagonal matrix, $U \in \R^{n\times n}$ an orthogonal matrix and $\Omega  =UDU^T$. If $u \sim N(0,D)$ and $\xi \sim N(0,\Omega)$  then 
\begin{equation}\label{eq:gaussdiag}\E{\frac{\xi \xi^T}{\xi^T\xi}} =U\E{\frac{u u^T}{u^Tu}}U^T ,\end{equation}
and
\begin{equation}\label{eq:gaussupper}
\E{\frac{\xi \xi^T}{\xi^T\xi}} \succeq \frac{2}{\pi}\frac{\Omega}{\Tr{\Omega}}.
\end{equation}
\end{lemma}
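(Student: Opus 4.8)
\emph{Proof strategy.} The plan has three stages: prove the rotation identity~\eqref{eq:gaussdiag} by a change of variables, reduce the positive semidefinite bound~\eqref{eq:gaussupper} to a single one-dimensional inequality, and then prove that inequality. For~\eqref{eq:gaussdiag}: since $\Omega = UDU^T$ with $U$ orthogonal, if $u\sim N(0,D)$ then $Uu\sim N(0,\Omega)$, so we may realize $\xi = Uu$. Then $\xi\xi^T = U(uu^T)U^T$ while $\xi^T\xi = u^TU^TUu = u^Tu$, hence $\xi\xi^T/(\xi^T\xi) = U\,(uu^T/(u^Tu))\,U^T$ pathwise, and taking expectations gives~\eqref{eq:gaussdiag}.

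For~\eqref{eq:gaussupper}, combining~\eqref{eq:gaussdiag} with $\Omega/\Tr{\Omega} = U(D/\Tr{D})U^T$ shows it suffices to treat the diagonal case: prove $\E{uu^T/(u^Tu)}\succeq\tfrac{2}{\pi}D/\Tr{D}$ for $u\sim N(0,D)$, $D=\mathrm{diag}(d_1,\dots,d_n)$ with all $d_j>0$. Here $\E{uu^T/(u^Tu)}$ is itself diagonal, because changing the sign of a single coordinate $u_i$ preserves $u^Tu$ but negates every $u_iu_j$ with $j\ne i$, so off-diagonal entries vanish by the symmetry of $N(0,D)$. As both matrices in the claimed bound are diagonal, the bound is equivalent to the $n$ scalar inequalities $\E{u_i^2/(u^Tu)}\ge\tfrac{2}{\pi}\,d_i/\Tr{D}$. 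Substituting $u_j=\sqrt{d_j}\,g_j$ with $g\sim N(0,I_n)$ turns these into $\E{g_i^2/\sum_j d_jg_j^2}\ge 2/(\pi\sum_j d_j)$. Conditioning on $g_i$ and applying Jensen's inequality to the convex map $x\mapsto1/x$ --- replacing the independent quantity $\sum_{k\ne i}d_kg_k^2$ by its mean $\sum_{k\ne i}d_k$ --- bounds the left-hand side below by $\E{g_i^2/(d_ig_i^2+\sum_{k\ne i}d_k)}$, and after the rescaling $t=(\sum_{k\ne i}d_k)/d_i$ the whole problem collapses to the single statement
\begin{equation}\label{eq:scalar-gauss}
(1+t)\,\E{\frac{g^2}{g^2+t}}\;\ge\;\frac{2}{\pi},\qquad g\sim N(0,1),\ t\ge0.
\end{equation}

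Proving~\eqref{eq:scalar-gauss} is the crux, and the part I expect to require genuine care. The route I would take is to use the closed form $\E{g^2/(g^2+t)}=1-\sqrt{\pi t/2}\,e^{t/2}\,\mathrm{erfc}(\sqrt{t/2})$, which comes from the standard Gaussian integral $\E{1/(g^2+t)}=\sqrt{\pi/(2t)}\,e^{t/2}\,\mathrm{erfc}(\sqrt{t/2})$. Setting $\phi(t):=(1+t)\E{g^2/(g^2+t)}$, one has $\phi(0)=1$ and $\phi(t)\to1$ as $t\to\infty$ (using $\E{g^2}=1$), while $\phi'(0^+)=-\infty$, so $\phi$ attains its infimum at an interior point; the task is to show this infimum is at least $2/\pi$. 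I would do this with the sharp two-sided estimates for the scaled complementary error function $w(a)=e^{a^2}\mathrm{erfc}(a)$ (for instance $\tfrac{2}{\sqrt\pi(a+\sqrt{a^2+2})}\le w(a)\le\tfrac{2}{\sqrt\pi(a+\sqrt{a^2+4/\pi})}$), probably after splitting $t$ into small, moderate, and large ranges, since the slack in~\eqref{eq:scalar-gauss} is modest (the true minimum of $\phi$ is near $0.68$ versus $2/\pi\approx0.637$), so no single crude bound on $w$ suffices. I note that a much weaker constant is cheap: from $\tfrac{1}{\xi^T\xi}\ge\tfrac1a\mathbf{1}[\xi^T\xi\le a]$, a Markov-type tail bound, and the fourth-moment identity $\E{(\xi^T\xi)\,\xi\xi^T}=\Tr{\Omega}\,\Omega+2\Omega^2\preceq3\,\Tr{\Omega}\,\Omega$ (Wick's formula together with $\Omega^2\preceq\Tr{\Omega}\,\Omega$), optimizing over $a$ yields $\E{\xi\xi^T/(\xi^T\xi)}\succeq\tfrac1{12}\,\Omega/\Tr{\Omega}$; but reaching the stated $2/\pi$ appears to need the error-function analysis above.
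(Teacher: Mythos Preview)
Your proof of \eqref{eq:gaussdiag} and your reduction of \eqref{eq:gaussupper} to the diagonal, entrywise statement $\E{u_i^2/\sum_j u_j^2}\ge\tfrac{2}{\pi}\,D_{ii}/\Tr{D}$ match the paper's argument. The divergence is in how that scalar bound is established. You apply the one\nobreakdash-variable Jensen inequality for $x\mapsto 1/x$, conditioning out the coordinates $j\ne i$, and are left with the inequality $(1+t)\,\E{g^2/(g^2+t)}\ge 2/\pi$ for $g\sim N(0,1)$, which you do not actually prove but only propose to attack via closed forms and sharp bounds for $e^{a^2}\mathrm{erfc}(a)$. As written, this is the crux of the lemma and it is left open.

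The paper's route is a one\nobreakdash-liner: it applies Jensen directly to the \emph{bivariate} convex function $(x,y)\mapsto x^2/y$ on $\{y>0\}$, with $X=|u_i|$ and $Y=\sum_j u_j^2$, obtaining
\[
\E{\frac{u_i^2}{\sum_j u_j^2}} \;=\; \E{\frac{X^2}{Y}} \;\ge\; \frac{(\E{|u_i|})^2}{\E{\sum_j u_j^2}} \;=\; \frac{2D_{ii}/\pi}{\Tr{D}},
\]
since $\E{|u_i|}=\sqrt{2D_{ii}/\pi}$. The constant $2/\pi$ is precisely the half\nobreakdash-normal ratio $(\E{|g|})^2/\E{g^2}$ and requires no further analysis. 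The same device also dispatches your residual inequality immediately: with $X=|g|$ and $Y=g^2+t$ one gets $\E{g^2/(g^2+t)}\ge (\E{|g|})^2/(1+t) = (2/\pi)/(1+t)$, so the error\nobreakdash-function programme is unnecessary. In short, your strategy is sound but incomplete and substantially harder than needed; replacing the scalar Jensen step by the bivariate one closes the gap and recovers the paper's proof.
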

\begin{proof}
Let us write $S(\xi)$ for the random vector $\xi/\|\xi\|_2$ (if $\xi=0$, we set $S(\xi)=0$).  Using this notation, we can write \[ \E{ \xi (\xi^T \xi)^{-1} \xi^T  }  = \E{S(\xi)(S(\xi))^T} = \COV{S(\xi)},\]
where the last identity follows since $\E{S(\xi)}=0$, which in turn holds as the Gaussian distribution is centrally symmetric. As $\xi = U u$,  note that  
\[S(u) = \frac{U^T \xi}{\|U^T \xi\|_2} = \frac{U^T \xi}{\|\xi\|_2} = U^T S(\xi).\]
Left multiplying both sides by $U$ we obtain
$U S(u) = S(\xi)$, from which we obtain
\[\COV{S(\xi)} = U \COV{ S(u) } U^T,  
\]
which is equivalent to~\eqref{eq:gaussdiag}.

   To prove \eqref{eq:gaussupper}, note first that  $M \eqdef \E{u u^T/u^Tu}$ is a diagonal matrix.  One can verify this by direct calculation (informally, this holds because the entries of $u$ are independent and centrally symmetric). The $i$th diagonal entry is given by
\[M_{ii} = \E{\frac{u_{i}^2}{\sum_{j=1}^n u_j^2}}.\]

%
As the map $(x,y) \rightarrow x^2/y$ is convex on the positive orthant, we can apply Jensen's inequality, which gives
\[\E{\frac{u_{i}^2}{\sum_{j=1}^n u_j^2}} \geq \frac{\left(\E{|u_{i}|}\right)^2}{\sum_{j=1}^n \E{u_j^2}}
= \frac{2}{\pi}\frac{D_{ii}}{\Tr{D}},\]
which concludes the proof.   
\end{proof}
}


\section{Expected Gaussian Projection Matrix in 2D}\rob{
\begin{lemma} \label{lem:2Dgausscov}
Let $\xi \sim N(0,\Omega)$ and $\Omega \in \R^{2\times 2}$ be a positive definite matrix, then
\begin{equation}\label{eq:2Dgausscov}\E{\frac{\xi \xi^T}{\xi^T\xi}} = \frac{\Omega^{1/2}}{\Tr{\Omega^{1/2}}}.\end{equation}
\end{lemma}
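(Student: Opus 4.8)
The plan is to reduce to the diagonal case via Lemma~\ref{lem:gaussdiag} and then compute the one remaining scalar integral explicitly in polar coordinates. First I would write the spectral decomposition $\Omega = UDU^T$ with $U$ orthogonal and $D = \mathrm{diag}(d_1,d_2)$, $d_1,d_2>0$. By \eqref{eq:gaussdiag} in Lemma~\ref{lem:gaussdiag}, it suffices to prove the identity $\E{uu^T/(u^Tu)} = D^{1/2}/\Tr{D^{1/2}}$ for $u\sim N(0,D)$, since conjugating both sides by $U$ and using $\Omega^{1/2}=UD^{1/2}U^T$, $\Tr{\Omega^{1/2}}=\Tr{D^{1/2}}$ then yields \eqref{eq:2Dgausscov}. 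As already noted in the proof of Lemma~\ref{lem:gaussdiag}, $M\eqdef \E{uu^T/(u^Tu)}$ is diagonal, so the whole problem collapses to showing that its diagonal entries are
\[
M_{11} = \frac{\sqrt{d_1}}{\sqrt{d_1}+\sqrt{d_2}}, \qquad M_{22} = \frac{\sqrt{d_2}}{\sqrt{d_1}+\sqrt{d_2}}.
\]
Since $M_{11}+M_{22}=\E{(u_1^2+u_2^2)/(u_1^2+u_2^2)}=1$ automatically, I only need to pin down one of them, say $M_{11}=\E{u_1^2/(u_1^2+u_2^2)}$.

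To compute $M_{11}$, write $u_1 = \sqrt{d_1}\,g_1$, $u_2=\sqrt{d_2}\,g_2$ with $g_1,g_2$ i.i.d.\ standard normal, so that
\[
M_{11} = \frac{1}{2\pi}\iint_{\R^2} \frac{d_1 g_1^2}{d_1 g_1^2 + d_2 g_2^2}\, e^{-(g_1^2+g_2^2)/2}\, dg_1\, dg_2.
\]
Passing to polar coordinates $g_1 = r\cos\theta$, $g_2 = r\sin\theta$, the radial part integrates out (the integrand's angular factor is $r$-independent and $\int_0^\infty r e^{-r^2/2}\,dr = 1$), leaving the purely angular integral
\[
M_{11} = \frac{1}{2\pi}\int_0^{2\pi} \frac{d_1\cos^2\theta}{d_1\cos^2\theta + d_2\sin^2\theta}\, d\theta.
\]
The remaining step is to evaluate this elementary integral and check it equals $\sqrt{d_1}/(\sqrt{d_1}+\sqrt{d_2})$; this can be done, e.g., via the substitution $t=\tan\theta$ on each quadrant, reducing it to $\int_{-\infty}^\infty \frac{d_1\,dt}{(d_1+d_2 t^2)(1+t^2)}$, which splits by partial fractions into standard arctangent integrals and yields $\frac{d_1}{d_1-d_2}\bigl(1 - \sqrt{d_2/d_1}\bigr) = \frac{\sqrt{d_1}}{\sqrt{d_1}+\sqrt{d_2}}$ (the case $d_1=d_2$ being handled by continuity or directly, giving $1/2$).

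The only mild obstacle is the bookkeeping in the angular integral — getting the partial-fraction decomposition and the arctangent evaluations right, and confirming that the apparent singularity at $d_1=d_2$ is removable. Everything else is a straightforward application of Lemma~\ref{lem:gaussdiag} and the observation that $M$ is diagonal with trace one, so that a single scalar computation suffices. I would present the reduction first, then the polar-coordinate computation, and finally dispatch the $d_1=d_2$ case in one line.
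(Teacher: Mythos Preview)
Your proposal is correct and follows essentially the same route as the paper: reduce to the diagonal case via \eqref{eq:gaussdiag}, then evaluate the single scalar expectation by polar coordinates followed by a $\tan\theta$ substitution and partial fractions, treating the equal-eigenvalue case separately. Your version is marginally cleaner in two places---standardizing to i.i.d.\ normals before switching to polar coordinates (so the radial integral is trivially $1$ rather than $1/C(\theta)$) and using $M_{11}+M_{22}=1$ to avoid repeating the computation for the second entry---but these are cosmetic streamlinings of the paper's argument rather than a different approach.
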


\begin{proof}
Let $\Sigma = U D U^T$ and $u \sim N(0,D).$ Given~\eqref{eq:gaussdiag} it suffices to show that 
\begin{equation}\label{eq:h98hs98hs9ss}\COV{S(u)} = \frac{D^{1/2}}{\Tr{D^{1/2}}},\end{equation}
which we will now prove.

Let $\sigma_x^2$ and $\sigma_y^2$ be the two diagonal elements of $D.$ First, suppose that $\sigma_x = \sigma_y.$ Then $u = \sigma_x \eta$ where $\eta \sim N(0,I)$ and
\[\E{\frac{u u^T}{u^Tu}} = \frac{\sigma_x^2}{\sigma_x^2}\E{\frac{\eta \eta^T}{\eta^T\eta}} = \frac{1}{n}I = \frac{D^{1/2}}{\Tr{D^{1/2}}}. \]
Now suppose that $\sigma_x \neq \sigma_y.$ We calculate the diagonal terms of the covariance matrix by integrating
 \[\E{\frac{u_1^2}{u_1^2+u_2^2}}=\frac{1}{2\pi \sigma_x\sigma_y}\int_{\R^2} \frac{x^2}{x^2+y^2} e^{-\frac{1}{2}\left( x^2/\sigma_x^2 +y^2/\sigma_y^2 \right) } dxdy. \]

 Using polar coordinates $x= R \cos(\theta)$ and $y =R \sin(\theta)$ we have
\begin{equation}\int_{\R^2} \frac{x^2}{x^2+y^2} e^{-\frac{1}{2}\left( x^2/\sigma_x^2 +y^2/\sigma_y^2 \right) } dxdy =
\int_0^{2\pi}\int_{0}^{\infty} R\cos^2(\theta) e^{-\frac{R^2}{2} C(\theta) } dRd\theta,\label{eq:ondstep}
\end{equation}
where $C(\theta) \eqdef \left( \cos(\theta)^2/\sigma_x^2 +\sin(\theta)^2/\sigma_y^2 \right). $ Note that
\begin{equation}\label{eq:Rintfirst}
\int_{0}^{\infty} R e^{-\frac{C(\theta)R^2}{2} } dR = \left.-\frac{1}{C(\theta)}e^{-\frac{C(\theta)R^2}{2}} \right|_{0}^{\infty} =\frac{1}{C(\theta)}. 
\end{equation}
This applied in~\eqref{eq:ondstep} gives
\begin{align*} \E{\frac{u_1^2}{u_1^2+u_2^2}} &=\frac{1}{2\pi \sigma_x \sigma_y}  \int_0^{2\pi}\frac{\cos^2(\theta)}{ \cos(\theta)^2/\sigma_x^2 +\sin(\theta)^2/\sigma_y^2  }d\theta = \frac{b}{\pi}  \int_0^{\pi}\frac{\cos^2(\theta)}{ \cos^2(\theta) +b^2\sin^2(\theta)}d\theta, \end{align*}
where $b = \sigma_x/\sigma_y.$
Multiplying the numerator and denominator of the integrand by $\sec^4(x)$ gives the integral
\[\E{\frac{u_1^2}{u_1^2+u_2^2}} = \frac{b}{\pi} \int_0^{\pi}\frac{\sec^2(\theta)}{ \sec(\theta)^2\left( 1 +b^2\tan^2(\theta) \right) }d\theta.\]
Substituting $v=\tan(\theta)$ so that $v^2 +1=\sec^2(\theta)$, $dv =\sec^2(\theta)d \theta$ and using the partial fractions
\[\frac{1}{(v^2+1)\left( 1 +b^2v^2 \right)} = \frac{1}{1-b^2}\left( \frac{1}{v^2+1} -\frac{b^2}{b^2v^2+1} \right), \]
gives the integral
\begin{align}
\int \frac{dv}{ (v^2+1)\left( 1 +b^2 v^2 \right) } &=
\frac{1}{1-b^2}\left(\arctan(v)-b\arctan(b v) \right) \nonumber\\
&= \frac{1}{1-b^2}\left(\theta-b\arctan(b \tan(\theta))\right). \label{eq:A121}
\end{align} 
To apply the limits of integration, we must take care because of the singularity at $\theta=\pi/2$. For this, consider the limits 
\[\lim_{\theta \rightarrow (\pi/2)^-} \arctan(b\tan(\theta)) = \frac{\pi}{2}, \qquad \lim_{\theta \rightarrow (\pi/2)^+} \arctan(b\tan(\theta)) = -\frac{\pi}{2}.\]
Using this to evaluate~\eqref{eq:A121} on the limits of the interval $[0,\, \pi/2]$  gives 
\[\lim_{t \rightarrow (\pi/2)^-}\left.\frac{1}{1-b^2}\left(\theta-b\arctan(b \tan(\theta))\right)\right|_0^{t}=
 \frac{1}{1-b^2}\frac{\pi}{2}(1-b) = \frac{\pi}{2(1+b)}.\]
 Applying a similar argument for calculating the limits from  $\pi/2^+$ to $\pi$, we find
\[ \E{\frac{u_1^2}{u_1^2+u_2^2}} =\frac{2b}{\pi}  \frac{ \pi}{2(1+b)} =\frac{\sigma_x}{\sigma_y+\sigma_x}.\]
Repeating the same steps with $x$ swapped for $y$ we obtain the other diagonal element, which concludes the proof of~\eqref{eq:h98hs98hs9ss}.
\end{proof}}

\end{document}